\newtheorem{theorem}{Theorem}[section]
\newtheorem{corollary}[theorem]{Corollary}
\newtheorem{definition}[theorem]{Definition}
\newtheorem{lemma}[theorem]{Lemme}
\newtheorem{proposition}[theorem]{Proposition}
\newtheorem{remark}[theorem]{Remark}
\newenvironment{proof}[1][Proof]{\textbf{#1.} }{\ \rule{0.5em}{0.5em}}
\def\O{\Omega}
\def\dpa{\partial}
\def\D{\Delta}
\newcommand{\R}{\mathbb{R}}
\newcommand{\cqfd}
\title{On Pompeiu's-Schiffer's Conjectures  from  Shape Optimization}
\author{ Diaraf SECK$^{1,\,}$ \footnote{diaraf.seck@ucad.edu.sn}\\\\
$^{1}$ Laboratoire de Math\'ematiques de la D\'ecision et \\
d'Analyse Num\'erique (LMDAN), BP 16889, Dakar Fann, S\'enegal\\
Ecole Doctorale de Math\'ematiques et Informatique U.C.A.D. Dakar,  S\'en\'egal.}
\begin{document}
\maketitle
\begin{abstract}
Our aim is to  do a come back on Schiffer's and Pompeiu's conjectures with shape optimization tools, maximum principles and Serrin's symmetry method. We propose  a way to  get affirmative answers in some cases. We propose also sufficient conditions thanks to Riemannian  approach of infinite dimension that could be useful for numerical simulations of the shape of domains related  to these conjectures. 
\end{abstract}
{\bf Keywords:}  shape optimization, maximum principle, anti maximum principle, comparison principle, moving planes method, eigenvalue, symmetry, infinite Riemannian manifolds.\\
\section{Introduction} 
Dimitrie Pompeiu (1873-1954) was born in Romania.  If my information is accurate, he got his Ph.D in 1905 at the Sorbonne, in Paris under the supervision of Henri Ponicar\'e. 
He is known mainly for the   Pompeiu- Hausdorff metric,  Pompeiu problem and for the Cauchy-Pompeiu formula in complex analysis.\\
We formulate the Pompeiu problem as it is understood today.\\
Let $f\in L_{loc}^1 (\R^N)\cap \mathcal S'$ where  $\mathcal S'$ is the Schwartz class of distributions and 
\begin{eqnarray}\label{Pomp}
\int_{\sigma (D)} f(x) dx= 0\; \;  \forall \sigma \in G,
\end{eqnarray}
where $G$ is the group of all rigid motions of $\R^N$, consisting of all translations and rotations, and $D\subset  \R^N$ is a bounded domain, the closure $\overline{D}$ of which is diffeomorphic to a closed ball.\\
Does ($\ref{Pomp}$) imply that $f = 0?$
This question was raised in \cite{Po}: \\
If yes, then we say that $D$ has $P-$ property (Pompeiu' s property), and write $D \in P.$\\
Otherwise, we say that D fails to have $ P-$property, and write $ D \in \overline{P}. $ \\
Pompeiu claimed in 1929 that every plane bounded domain has $P-$property. This claim turned out to have a gap. A counterexample was given 15 years later by Chakalov, and for more information see  \cite{Ch}. \\
A domain $\Omega\subset \mathbb{R}^2$ is said to have the Pompeiu property if $f \equiv 0$ is the only continuous function in $\mathbb{R}^2$ such that the integral of $f$ over $\sigma (\Omega)$  , for every rigid motion $\sigma$ of $\mathbb{R}^2$ , vanishes. It has been conjectured that the disc is the only bounded simply connected domain, modulo sets of Lebesgue measure zero, in which the Pompeiu property fails. \\
By a theorem of L. Brown, B.M. Schreiber and B.A. Taylor \cite{BST},  a bounded domain $\Omega$ has the Pompeiu property if and only if $\hat{ \mu}$ , the Fourier- Laplace transform of the area measure $\mu$ of $\Omega$, does not vanish identically on
\begin{eqnarray*}
M_{\alpha}= \{ (\zeta_1, \zeta_2)\in \mathbb{C}^2: \zeta_1^2 + \zeta_2^2= \alpha\}
\end{eqnarray*} 
for any $\alpha \in \mathbb{C}^*= \mathbb{C}\backslash \{0\}.$\\
Also, in $1976, $ S.A. Williams \cite{Wi1}, showed that if $\Omega$ is a bounded simply connected Lipschitz domain then it has the Pompeiu property if and only if there is no solution to the following overdetermined Cauchy problem
\begin{eqnarray*}
\begin{cases}\Delta u + \alpha u= 1\; \;  in \; \;  \Omega\\
\frac{\partial u}{\partial \nu}_{\big\arrowvert_{\partial \Omega}}= 0\\
 u_{\big \arrowvert_{\partial \Omega}}=  0\\
\end{cases}
\end{eqnarray*}
where the Laplace operator $\Omega$ is in two dimensions, for any $\alpha \in \mathbb{C}^*$  (in fact, it suffices to consider $\alpha > 0,$ cf. \cite{B}). It is  to underline that, in this formulation  of the problem essentially goes back to the original book on “The theory of sound” by Lord Rayleigh. It later became known as the Schiffer problem and in this context the conjecture mentioned above is known as Schiffer's conjecture \cite{GS1} Another result of Williams is that any Lipschitz domain in which the Pompeiu property fails must have a nonsingular analytic boundary (see \cite{Wi2}). Consequently, as regards the Pompeiu problem, assuming that the boundary of a domain is nonsingular and analytic is not excessive. It is mentioned also in \cite{Ya}, Problem 80, p. 688 as an open  question. And until now, we are not yet aware that it is solved.

In $1993$, P. Ebenfelt  \cite{Eb}, obtained some results which support that conjecture. He showed that the disc is the only quadrature domain in which the Pompeiu property fails. Also he proved a result claiming nonexistence, under certain conditions, of solutions to a family of overdetermined Cauchy problems. This result is used to obtain the Pompeiu property for a wide variety of domains, including “kth roots” of ellipses and domains which are mapped conformally onto the unit disc by a rational function other than a M$\ddot{o}$bius transformation.\\
Now, we recall the current formulation of the P -problem is the following:\\
Let us make the following standing assumptions:
Assumptions $ A:$\\
$A1)$  $\Omega$ is a bounded domain, the closure of which is diffeomorphic to a closed ball, the boundary $S:= \partial \Omega$ of $\Omega$ is a closed connected $\mathcal  C^1-$ smooth surface,\\
 $A2)$ $\Omega$ fails to have $P-$ property.\\
\textbf{Conjecture 1:} If Assumptions A hold, then $\Omega$ is a ball.\\
\textbf{Conjecture 2:} If problem $$\Delta u + k^2 u= 1\;  in\;   \Omega, \frac{\partial u}{\partial \nu}_{\big\arrowvert_{\partial \Omega}}= 0 , u_{\big \arrowvert_{\partial \Omega}}= 0, k^2 = const > 0 $$
has a solution, then $\Omega$ is a ball.\\
This  is an open symmetry problem of long standing for partial differential equations.\\
\textbf{Conjecture 3:} If Assumption $ A1)$ holds and the Fourier transform $\hat{\chi}_{\Omega}$ of the characteristic function $\chi_\Omega$ of the domain $\Omega$ has a spherical surface of zeros, then $\Omega$ is a ball.\\
\textbf{Conjecture 4:}  Let $const$ be a given constant. If the problem 
\begin{eqnarray*}
\begin{cases}\Delta u + k^2 u= 0\; \;  in \; \;  \Omega\\
\frac{\partial u}{\partial \nu}_{\big\arrowvert_{\partial \Omega}}= 0\\
 u_{\big \arrowvert_{\partial \Omega}}= const \neq 0\\
 k^2 = \lambda(\Omega)> 0 
\end{cases}
\end{eqnarray*}
has a solution, then $\Omega$ is a ball.\\
All the above four conjectures are equivalent. And these  symmetry problems are known as the Schiffer's conjectures.

But there is another symmetry problem for partial differential equations included in the  M. Schiffer's conjectures:\\
\textbf{Conjecture 5:} 
Let $c\neq 0$ be a  given  real constant,
if 
\begin{eqnarray*}
\begin{cases}\Delta u + k^2 u= 0 \; \; in\; \;   \Omega\\
 \frac{\partial u}{\partial \nu}_{ \big\arrowvert_{\partial \Omega}}= c   \\
u_{\big \arrowvert_{\partial \Omega}}= 0 \\
k^2 = \lambda(\Omega) > 0 
\end{cases}
\end{eqnarray*}
  has a solution, then $\Omega$ is a ball.
\begin{remark}
It is important to underline that Conjecture 5 is not equivalent to Conjecture 4.
\end{remark}
There is  a list of interesting results about the Pompeiu problem. We would like to quote some of references containing these contributions (\cite{GS1}, \cite{Za}, \cite{B1}, \cite{B}, \cite{Dal1}, \cite{De}, \cite{FMW}, \cite{ChHe}, \cite{Li}, \cite{KM}, \cite{NSY}, \cite{BY}, \cite{Av}, \cite{Mo}).\\
Note that in \cite{Ke}, the author shows connections that exist between  the Pompeiu problem, stationary solutions to the Euler equations, and the convergence of solutions to the Navier-Stokes equations to that of the Euler equations in the limit as viscosity vanishes. \\
And in \cite{ChHe}, the authors use shape optimization tools to obtain   partial positive  answer. They  show the connection between these problems and the critical points of the functional eigenvalue with a volume constraint. They  use this fact, together with the continuous Steiner symmetrization, to give another proof of Serrin's result for the first Dirichlet eigenvalue. In two dimensions and for a general simple eigenvalue, they obtain different integral identities and a new overdetermined boundary value problem.

Our aim is to use shape optimization tools combined with maximum principles and a point of view of shape optimization in  an   infinite Riemanian framework to give some positive answers.
We give a theoretical  trial to open the numerical way in order to see if it is possible to give  framework which could permit to do simulations on these conjectures.\\
The paper is organized as follows: in the next section, we shall give a brief overview of basic but fundamental results on maximum principles. We will give a sufficient condition to  apply maximum principle theory for eigenvalue Laplace  problems. The section 3, is devoted to a  symmetry result of domains. It relies  on the Alexandrov moving planes  and the seminal paper due to J Serrin, \cite{Ser}. In section 4, we present the first main result on the  existence result of the Schiffer's problem (conjecture 5). In this section we use shape optimization theory and  the two previous sections.
In the last section, we propose  a study on necessary and sufficient conditions to  get positive answer for the Pompeiu problem. We will combine classical shape calculus and its Riemannian point of view in infinite dimension.

\section{Some basic tools on maximum principles} 
In this section, we intend  to give an overview of basic but important results of maximum principles. We are note going to give their  proofs. For more details the reader is invited to see for instance \cite{PW}, \cite{GT}, \cite{PS}, \cite{Ev}.\\ 
Let  $\Omega$ be  an open set of $\mathbb{R}^N$, consider the following elliptic operator
$L$ defined:
$$Lu  =  
  \displaystyle\sum_{i,j=1}^N{a_{ij}(x)\frac{\partial^2u}{\partial x_i\partial x_j}}+\displaystyle \sum_{i=1}^N{b_i(x)
\frac{\partial u}{\partial x_i}}+c(x)u \; \; x \in \Omega $$
where $a_{ij}=a_{ji}\,\,\in \,\,\mathcal{C}(\overline{\Omega}),\,\,\,\,
 c, b_i  \in \mathcal{C}(\overline{\Omega})\;$ if  $\Omega$ is bounded or $\mathcal{C}(\overline{\Omega})\cap L^{\infty} (\Omega)$ if $\Omega $ is unbounded.

and  there exists \,\,\, $c_0,C_0,\,\,\,\, 0\,\,<\,\,c_0\,\,<\,\,C_0 $\\
 such that  for all  \,\,$x\,\,\,\in \,\,\Omega\,$ and \, $\,\, \xi  \,\, \in\,\,\R^N,$ we have
$$c_0 |\xi |^{2}\,\,\leq  \,\, a_{ij}(x){\xi}_{i} {\xi}_{j} \,\,\leq \,\, C_0 |\xi|^{2}.$$
\begin{remark}
\begin{itemize}
\item The sign of  $c(x)$ plays a main role in the Maximum principles in many cases.
\item If $ \;  \, b_j=  \displaystyle\sum_{i=1}^N\frac{\partial a_{ij}(x)}{\partial x_i} \,\,\,\,$   then $L$ is a divergence form.
\end{itemize}
\end{remark}
\subsection{Weak maximum principle}
$$L= M+ c (x) I$$
where 
$$M  =  
  \displaystyle\sum_{i,j=1}^N{a_{ij}(x)\frac{\partial^2}{\partial x_i\partial x_j}}+\displaystyle \sum_{i=1}^N{b_i(x)
\frac{\partial }{\partial x_i}}$$
\begin{theorem}
Let  $\Omega$ be a bounded open  set  let us consider $M.$\\
Let $u \in \mathcal{C}^2 (\Omega)\cap\mathcal{C} (\overline{\Omega}), $ such that $Mu \geq 0$ in $\Omega.$
Then  $$\max_{\overline{\Omega}}u= \max_{\partial \Omega} u$$
\end{theorem}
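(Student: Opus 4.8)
The plan is to establish this classical weak maximum principle in two stages: first under a strict subsolution hypothesis, and then to remove the strictness by an auxiliary exponential perturbation.

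First I would treat the strict case $Mu > 0$ in $\Omega$ and argue by contradiction that the maximum of $u$ over $\overline{\Omega}$ cannot be attained at an interior point. Indeed, if it were attained at some $x_0 \in \Omega$, then the first-order necessary condition gives $\nabla u(x_0) = 0$, so the first-order part $\sum_i b_i(x_0)\,\partial u/\partial x_i(x_0)$ vanishes, and the second-order necessary condition gives that the Hessian $D^2 u(x_0)$ is negative semidefinite. Since the coefficient matrix $(a_{ij}(x_0))$ is symmetric and, by the uniform ellipticity hypothesis, positive definite, the elementary linear-algebra fact that $\mathrm{trace}(AB) \le 0$ whenever $A$ is symmetric with $A \succeq 0$ and $B \preceq 0$ yields $\sum_{i,j} a_{ij}(x_0)\,\partial^2 u/\partial x_i \partial x_j(x_0) \le 0$. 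Combining, $Mu(x_0) \le 0$, contradicting $Mu(x_0) > 0$. Hence $\max_{\overline{\Omega}} u = \max_{\partial\Omega} u$ in the strict case.

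Next, for the actual hypothesis $Mu \ge 0$, I would reduce to the strict case by perturbing $u$ with an exponential. Set $v_\varepsilon = u + \varepsilon\, e^{\gamma x_1}$ for $\varepsilon > 0$ and a constant $\gamma > 0$ to be fixed. A direct computation gives $M(e^{\gamma x_1}) = (a_{11}(x)\gamma^2 + b_1(x)\gamma)\, e^{\gamma x_1}$. The uniform ellipticity inequality evaluated at $\xi = e_1$ forces $a_{11}(x) \ge c_0 > 0$, while $b_1$ is bounded on the bounded set $\overline{\Omega}$; therefore, for $\gamma$ large enough, $a_{11}(x)\gamma^2 + b_1(x)\gamma > 0$ uniformly on $\overline{\Omega}$, so $M(e^{\gamma x_1}) > 0$. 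Consequently $M v_\varepsilon = Mu + \varepsilon\, M(e^{\gamma x_1}) > 0$, and the strict case applies to $v_\varepsilon$, giving $\max_{\overline{\Omega}} v_\varepsilon = \max_{\partial\Omega} v_\varepsilon$.

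Finally I would let $\varepsilon \to 0^+$. From $\max_{\overline{\Omega}} u \le \max_{\overline{\Omega}} v_\varepsilon = \max_{\partial\Omega} v_\varepsilon \le \max_{\partial\Omega} u + \varepsilon\, \max_{\partial\Omega} e^{\gamma x_1}$ and passing to the limit, we obtain $\max_{\overline{\Omega}} u \le \max_{\partial\Omega} u$; the reverse inequality is immediate since $\partial\Omega \subset \overline{\Omega}$, giving equality. The main obstacle is really the sign bookkeeping at the interior maximum, namely the trace inequality $\mathrm{trace}\big((a_{ij})\,D^2 u\big) \le 0$, which is where positive definiteness of $(a_{ij})$ (from uniform ellipticity) and negative semidefiniteness of the Hessian are combined; everything else is routine once the auxiliary exponential is chosen large enough. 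I would stress that the absence of a zeroth-order term $c(x)u$ is essential: with $c \not\equiv 0$ one cannot deduce $Mu(x_0) \le 0$ from the vanishing gradient alone, which is exactly why the hypothesis is placed on $M$ rather than on the full operator $L$.
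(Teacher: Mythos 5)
Your proof is correct and complete: the reduction to the strict case $Mu>0$ via the interior first- and second-order conditions together with the trace inequality $\mathrm{trace}\bigl((a_{ij})D^2u\bigr)\le 0$, followed by the perturbation $u+\varepsilon e^{\gamma x_1}$ with $\gamma$ large (using $a_{11}\ge c_0$ from ellipticity and the boundedness of $b_1$ and of $\Omega$), is exactly the classical argument. The paper itself gives no proof of this statement --- it is recalled as a standard fact with references to Protter--Weinberger, Gilbarg--Trudinger, Pucci--Serrin and Evans --- and your argument is the one found in those sources, so there is nothing in the paper to diverge from. Your closing remark on why the hypothesis is placed on $M$ rather than on the full operator $L$ is also apt, since the paper's subsequent statements handle the zeroth-order term separately under the sign condition $c\le 0$.
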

\begin{theorem}
Let us consider $\Omega,$  a bounded open set,  $L$ as above with $c(x)\leq 0.$
One supposes that $u \in \mathcal{C}^2 (\Omega)\cap \mathcal{C} (\overline{\Omega}), $ with $Lu\geq 0$ in $\Omega$
\\
Then $$\max_{\overline{\Omega}} u= \max_{\partial \Omega} u^{+}$$

\end{theorem}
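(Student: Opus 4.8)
The plan is to reduce the statement to the zeroth-order-free case already settled in the preceding theorem (the maximum principle for $M$), by restricting attention to the region where $u$ is positive. Set $\Omega^+ = \{x \in \Omega : u(x) > 0\}$, which is open by continuity of $u$ and bounded since $\Omega$ is. If $\Omega^+ = \emptyset$, then $u \le 0$ on $\overline{\Omega}$ and $\max_{\partial\Omega} u^+ = 0 \ge \max_{\overline{\Omega}} u$, which is the whole content in this degenerate case; I therefore assume $\Omega^+ \ne \emptyset$, so that $\max_{\overline{\Omega}} u > 0$ and this maximum is automatically attained on $\overline{\Omega^+}$.

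First I would record the key sign computation on $\Omega^+$. There $u > 0$ while $c(x) \le 0$ by hypothesis, hence $c(x)u(x) \le 0$; since $Lu = Mu + c(x)u \ge 0$ in $\Omega$, it follows that on $\Omega^+$
$$Mu = Lu - c(x)u \ge -c(x)u \ge 0.$$
Thus $u \in \mathcal{C}^2(\Omega^+) \cap \mathcal{C}(\overline{\Omega^+})$ satisfies $Mu \ge 0$ on the bounded open set $\Omega^+$, so the maximum principle for $M$ from the previous theorem applies and yields $\max_{\overline{\Omega^+}} u = \max_{\partial\Omega^+} u$.

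It then remains to locate $\partial\Omega^+$. The crucial observation is the topological inclusion $\partial\Omega^+ \subset \partial\Omega \cup \{x \in \Omega : u(x) = 0\}$: indeed, a boundary point of $\Omega^+$ lying inside $\Omega$ satisfies $u \ge 0$ by continuity from within $\Omega^+$ and $u \le 0$ because it does not belong to the open set $\Omega^+$, forcing $u = 0$ there. Since $\max_{\overline{\Omega^+}} u > 0$, this maximum cannot be attained where $u = 0$, so it is attained at some $x^* \in \partial\Omega^+ \cap \partial\Omega$, where $u(x^*) > 0$ and hence $u^+(x^*) = u(x^*)$. This gives $\max_{\overline{\Omega}} u = \max_{\overline{\Omega^+}} u = u(x^*) \le \max_{\partial\Omega} u^+$, while the same point $x^* \in \partial\Omega$ furnishes the reverse inequality, so equality holds.

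The only delicate point is that $\Omega^+$ need not be a smooth domain, so I must take care to invoke the preceding theorem only with the hypotheses it genuinely requires — boundedness, openness, and $u \in \mathcal{C}^2 \cap \mathcal{C}(\overline{\,\cdot\,})$ — none of which demand any regularity of $\partial\Omega^+$; the decomposition of $\partial\Omega^+$ above is purely topological and needs no smoothness. I expect this structural analysis of $\partial\Omega^+$, together with the separate handling of the degenerate case $\Omega^+ = \emptyset$, to be the main thing to get right, the sign computation being immediate.
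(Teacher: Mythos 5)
Your argument is correct, and since the paper states this theorem without proof (it explicitly defers to the standard references \cite{PW}, \cite{GT}, \cite{PS}, \cite{Ev}), there is nothing internal to compare it against; what you give is the classical textbook proof, essentially Theorem 3.1 of Gilbarg--Trudinger. The reduction to the zeroth-order-free principle on $\Omega^{+}=\{x\in\Omega : u(x)>0\}$ via $Mu=Lu-c(x)u\geq 0$ there, the purely topological inclusion $\partial\Omega^{+}\subset\partial\Omega\cup\{x\in\Omega : u(x)=0\}$, and the remark that the preceding theorem needs no regularity of $\partial\Omega^{+}$ are all exactly the right points to make. One comment concerns the statement rather than your proof: as literally written, with an equality sign, the theorem fails in the degenerate case you isolate (take $u\equiv -1$ and $c\equiv 0$, so that $Lu=0\geq 0$, $\max_{\overline{\Omega}}u=-1$, but $\max_{\partial\Omega}u^{+}=0$); the correct general assertion, and the one your argument actually establishes, is $\max_{\overline{\Omega}}u\leq\max_{\partial\Omega}u^{+}$, with equality whenever $u$ is somewhere positive. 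Your reading of the case $\Omega^{+}=\emptyset$ --- that the inequality is the whole content there --- is therefore the right one, and no genuine gap remains.
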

\begin{corollary}
Let us consider $\Omega,$  a bounded open set,  $L$ as above with $c(x)\leq 0.$
One supposes that $u \in \mathcal{C}^2 (\Omega)\cap \mathcal{C}(\overline{\Omega}), $ with $Lu\leq  0$ and $u \geq 0$ on $\partial \Omega$ then
$$u\geq 0 \; \; \mbox{in}\; \;  \Omega.$$
\end{corollary}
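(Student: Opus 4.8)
The plan is to obtain the conclusion directly from the weak maximum principle stated just above---the version valid for operators $L$ with $c(x)\le 0$---by applying that theorem not to $u$ but to its negative $v:=-u$. The single idea driving the argument is that the differential inequality reverses under this substitution: since $L$ is linear, $Lv=-Lu\ge 0$ in $\Omega$, so $v$ is an admissible function for the theorem.

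Concretely, I would first record that $v=-u\in\mathcal{C}^2(\Omega)\cap\mathcal{C}(\overline{\Omega})$ and $Lv\ge 0$ in $\Omega$, whence the theorem yields
$$\max_{\overline{\Omega}}v=\max_{\partial\Omega}v^{+},$$
with $v^{+}=\max(v,0)$ the positive part. I would then invoke the boundary hypothesis: $u\ge 0$ on $\partial\Omega$ means $v\le 0$ on $\partial\Omega$, so $v^{+}\equiv 0$ on $\partial\Omega$ and hence $\max_{\partial\Omega}v^{+}=0$. Combining the two facts gives $\max_{\overline{\Omega}}v\le 0$, that is $v\le 0$ on all of $\overline{\Omega}$; rewriting $v=-u$ produces $u\ge 0$ in $\overline{\Omega}$, and a fortiori in $\Omega$, which is the assertion.

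There is no genuine obstacle to overcome: the corollary is a one-line reduction to the preceding theorem. The only point deserving attention is the role of the positive part $v^{+}$ appearing in that theorem's conclusion. It is exactly the presence of $(\cdot)^{+}$ rather than the bare function that allows the sign condition $u\ge 0$ on the boundary to annihilate the right-hand side; without it one could not conclude, and this is also why the hypothesis $c(x)\le 0$ (needed for that form of the maximum principle) cannot be dropped.
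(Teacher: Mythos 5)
Your proof is correct and is exactly the standard one-line reduction the paper intends: the corollary is stated without proof (the section explicitly defers proofs to the references), and applying the weak maximum principle for $c\le 0$ to $v=-u$ and using $v^{+}\equiv 0$ on $\partial\Omega$ is the expected argument. No issues.
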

\begin{definition}
One supposes $c : \Omega \rightarrow  \mathbb{R},$ ($c$ is not necessary positive). The Maximum Principle (MP) is satisfied for $L$ in $\Omega$ if $\forall w \in \mathcal C^2 (\Omega)\cap \mathcal C (\overline{\Omega}) $ such that
\begin{eqnarray*}
\begin{cases}\-L w \leq 0\; \;  in \; \;  \Omega\\
w\geq  0 \, \, on \, \; \partial \Omega\\
\end{cases}
\end{eqnarray*}
then  $w \geq 0$ in $\Omega$
\end{definition}
\begin{remark}
If $c(x)\leq 0$ in $\Omega,$ then $L$ satisfies the maximum principle.
\end{remark}
\begin{theorem}
If  there is $g \in \mathcal C^2 (\Omega) \cap \mathcal C (\overline{\Omega})$ such that $g>0$ on $\overline{\Omega}$ and $ Lg \leq 0,$  then $L$ satisfies the  maximum principle in $\Omega.$
\end{theorem}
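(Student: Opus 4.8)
The plan is to reduce the general operator $L=M+c(x)I$, whose zeroth-order coefficient $c$ may change sign, to an auxiliary elliptic operator whose zeroth-order coefficient is nonpositive, so that the results already recorded above apply — namely the Corollary together with the Remark asserting that $c\le0$ forces the maximum principle. The device is a multiplicative substitution built from the given positive function $g$. So let $w\in\mathcal C^2(\Omega)\cap\mathcal C(\overline{\Omega})$ satisfy $Lw\le0$ in $\Omega$ and $w\ge0$ on $\partial\Omega$; the goal is to conclude $w\ge0$ in $\Omega$. Since $g>0$ on the compact set $\overline{\Omega}$ and $g$ is continuous, $g$ is bounded below by a positive constant, so $v:=w/g$ is well defined and inherits the regularity $v\in\mathcal C^2(\Omega)\cap\mathcal C(\overline{\Omega})$. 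On $\partial\Omega$ one has $v=w/g\ge0$, and the task is to propagate this sign into the interior.

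Next I would compute $L(gv)$ by the product rule. Using $\partial_i\partial_j(gv)=v\,\partial_i\partial_j g+g\,\partial_i\partial_j v+\partial_i g\,\partial_j v+\partial_j g\,\partial_i v$ together with the symmetry $a_{ij}=a_{ji}$, the mixed second-order terms collapse to $2\sum_{i,j}a_{ij}\,\partial_i g\,\partial_j v$, and after collecting the terms that carry $v$ one gets $Lw=L(gv)=g\,Mv+2\sum_{i,j}a_{ij}\,\partial_i g\,\partial_j v+v\,Lg$, where the key is that $v(Mg)+c\,g\,v=v(Mg+cg)=v\,Lg$. Dividing by $g>0$ defines $\tilde L v:=\tfrac{1}{g}Lw$, and $\tilde L$ is an operator with the same principal part $(a_{ij})$ — hence uniformly elliptic with the same constants $c_0,C_0$ — with modified yet still continuous first-order coefficients $b_j+\tfrac{2}{g}\sum_i a_{ij}\,\partial_i g$, and with zeroth-order coefficient $\tilde c:=Lg/g$.

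The decisive point is that $\tilde c\le0$ in $\Omega$, which is exactly where the hypotheses $Lg\le0$ and $g>0$ enter; this is the heart of the argument and the only place the existence of $g$ is used. Once this sign is in hand, the conclusion is immediate: since $Lw\le0$ and $g>0$ we have $\tilde L v=Lw/g\le0$ in $\Omega$, while $v\ge0$ on $\partial\Omega$. Because $\tilde L$ has nonpositive zeroth-order coefficient, the Corollary (equivalently the Remark $c\le0\Rightarrow$ maximum principle) applies to $\tilde L$ and yields $v\ge0$ in $\Omega$; multiplying back by $g>0$ gives $w=gv\ge0$ in $\Omega$, as required.

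The main obstacle is purely computational: one must carry out the product-rule expansion carefully and verify that, after regrouping, the coefficient of $v$ is precisely $Lg$ and not merely $Mg$, so that the sign condition $Lg\le0$ transfers intact to $\tilde c=Lg/g$. The uniform ellipticity and the coefficient regularity of $\tilde L$ are then automatic from those of $L$ and the $\mathcal C^2$ regularity of $g$, and no genuinely analytic difficulty remains beyond invoking the already-established weak maximum principle for operators with nonpositive zeroth-order term.
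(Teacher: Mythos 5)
The paper does not actually prove this theorem: Section~2 states explicitly that proofs of these maximum-principle facts are omitted and the reader is referred to \cite{PW}, \cite{GT}, \cite{PS}, \cite{Ev}. So there is no in-paper argument to compare against; judged on its own, your proof is the classical one found in those references, and it is correct. The product-rule computation is right: with $w=gv$ and $a_{ij}=a_{ji}$ one gets $L(gv)=g\,Mv+2\sum_{i,j}a_{ij}\,\partial_i g\,\partial_j v+v\,Lg$, so the zeroth-order coefficient of the transformed operator is exactly $Lg/g\le 0$, and the Corollary for operators with nonpositive zeroth-order term then gives $v\ge 0$, hence $w\ge 0$. The only point worth flagging is a regularity caveat: the hypothesis gives $g\in\mathcal C^2(\Omega)\cap\mathcal C(\overline{\Omega})$ only, so the new drift coefficients $b_j+\tfrac{2}{g}\sum_i a_{ij}\,\partial_i g$ are continuous in $\Omega$ but need not extend continuously to $\overline{\Omega}$ nor be bounded near $\partial\Omega$, whereas the weak maximum principle as stated earlier in the section is formulated for coefficients in $\mathcal C(\overline{\Omega})$ (bounded drift is used in its standard proof via the comparison function $e^{\gamma x_1}$). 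Your argument is therefore complete when $g\in\mathcal C^2(\overline{\Omega})$ or when $\nabla g$ is bounded; for the literal hypothesis of the theorem one should either add a word on this (e.g.\ exhaust $\Omega$ by subdomains $\Omega'\Subset\Omega$ on which the transformed operator has bounded coefficients, and pass to the limit using $v\in\mathcal C(\overline{\Omega})$ and $v\ge 0$ on $\partial\Omega$) or invoke a version of the weak maximum principle that tolerates locally bounded drift. This is a technical refinement, not a flaw in the idea.
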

On can improve the above theorem as follows:
\begin{theorem}
If  there exists $g \in \mathcal C^2 (\Omega) \cap \mathcal C (\overline{\Omega})$ such that $g>0$ on $\Omega, Lg \leq 0$ in $\Omega,$ and $  g_{ \big\arrowvert_{\partial \Omega}} \not\equiv 0$ or $ Lg  \,\;  \not\equiv 0,$ then $L$ satisfies the  maximum principle in $\Omega.$
\end{theorem}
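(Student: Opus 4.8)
My plan is to remove the zeroth–order term of $L$ by the classical multiplicative substitution $w=gv$, thereby reducing the problem to an operator whose zeroth–order coefficient is nonpositive — a situation already governed by the corollary stated above. First I would record the effect of this substitution: for $v\in\mathcal C^2(\Omega)$, writing $w=gv$ and using $a_{ij}=a_{ji}$ together with $g>0$ in $\Omega$, a direct differentiation yields
\[
L(gv)=g\,\widetilde L v,\qquad \widetilde L v=\sum_{i,j=1}^N a_{ij}\frac{\partial^2 v}{\partial x_i\partial x_j}+\sum_{i=1}^N \widetilde b_i\frac{\partial v}{\partial x_i}+\widetilde c\,v,
\]
where the first–order coefficients $\widetilde b_i=b_i+\frac{2}{g}\sum_{j}a_{ij}\partial_j g$ are continuous in $\Omega$ and, decisively, $\widetilde c=\frac{Lg}{g}$. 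Since $\widetilde L$ keeps the principal part $a_{ij}$, it is uniformly elliptic with the same ellipticity constants, and because $Lg\le 0$ and $g>0$ in $\Omega$ we have $\widetilde c\le 0$ there.

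Next I would argue by contradiction. Suppose $L$ fails the maximum principle, so that some $w\in\mathcal C^2(\Omega)\cap\mathcal C(\overline\Omega)$ satisfies $Lw\le 0$ in $\Omega$ and $w\ge 0$ on $\partial\Omega$, yet $w<0$ somewhere in $\Omega$. Setting $v=w/g$ in $\Omega$ gives $\widetilde L v=\frac1g\,Lw\le 0$, so $v$ is a supersolution of $\widetilde L$, and $v$ is negative somewhere. Let $\mu=\inf_\Omega v<0$. Granting for the moment that this negative infimum is attained at an interior point, the strong maximum principle for $\widetilde L$ (applicable since $\widetilde c\le 0$) applied at an interior minimum of negative value forces $v\equiv\mu$ on the connected set $\Omega$. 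I would then back–substitute: $w=\mu g$ gives $Lw=\mu\,Lg\le 0$, and since $\mu<0$ this yields $Lg\ge 0$, whence $Lg\equiv 0$ in $\Omega$. Moreover $g\ge 0$ on $\partial\Omega$ by continuity, so $w=\mu g\le 0$ on $\partial\Omega$; together with $w\ge 0$ on $\partial\Omega$ this forces $\mu g\equiv 0$, hence $g\equiv 0$ on $\partial\Omega$. Thus both $Lg\equiv 0$ and $g|_{\partial\Omega}\equiv 0$, contradicting the standing hypothesis that $g|_{\partial\Omega}\not\equiv 0$ or $Lg\not\equiv 0$.

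The step I expect to be the main obstacle is precisely the claim just invoked, namely that the negative infimum of $v=w/g$ is genuinely attained inside $\Omega$ rather than only approached along a sequence running out to $\partial\Omega$. Away from the boundary this is immediate; the danger is confined to boundary points $x_0$ at which $w(x_0)=0$ and $g(x_0)=0$ simultaneously, since elsewhere on $\partial\Omega$ one has $v\to w/g\ge 0$ or $v\to+\infty$. Controlling this coincidence set is where the non-degeneracy assumption must do its work: a Hopf–type boundary estimate for the supersolution $v$, or equivalently the principal–eigenvalue reading in which the excluded configuration $Lg\equiv 0$, $g|_{\partial\Omega}\equiv 0$ is exactly the threshold case where the first eigenvalue of $-L$ vanishes and the maximum principle degenerates, is what closes the gap. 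By contrast, in the preceding theorem the stronger hypothesis $g>0$ on all of $\overline\Omega$ makes $v=w/g$ continuous up to $\partial\Omega$, so the corollary above applies directly to $\widetilde L$ and this difficulty never arises; the content of the present improvement is exactly the work needed to recover the conclusion when $g$ is permitted to vanish on the boundary.
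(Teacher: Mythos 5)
The paper does not actually prove this statement: it is listed among the background facts on maximum principles whose proofs are explicitly deferred to the references (Protter--Weinberger, Gilbarg--Trudinger, Pucci--Serrin), so there is nothing in the text to compare your argument against and it must stand on its own. Judged that way, it is incomplete, and the hole sits exactly where the theorem's content lies. Your reduction $w=gv$, the identity $L(gv)=g\widetilde{L}v$ with $\widetilde{c}=Lg/g\le 0$, and the branch in which $\mu=\inf_\Omega(w/g)$ is finite and attained at an interior point are all correct, and in that branch you invoke the hypothesis ``$g|_{\partial\Omega}\not\equiv 0$ or $Lg\not\equiv 0$'' precisely where it is needed. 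But you then write ``granting for the moment that this negative infimum is attained at an interior point'' and never discharge that assumption. Two distinct things can fail. First, $\mu$ need not even be finite: along an interior sequence approaching a boundary point where $g\to 0$ and $w\to 0$ (recall $w\ge 0$ only \emph{on} $\partial\Omega$), the quotient $w/g$ is a $0/0$ form and can tend to $-\infty$; nothing you say excludes this. Second, even if $\mu$ is finite, the proposed repair --- ``a Hopf-type boundary estimate for the supersolution $v$'' --- is not available under the stated hypotheses: $g$ is only continuous up to $\partial\Omega$, the drift coefficients $\widetilde{b}_i=b_i+\frac{2}{g}\sum_j a_{ij}\partial_j g$ of $\widetilde{L}$ blow up wherever $g$ vanishes on the boundary, and $v$ is not known to extend even boundedly to the coincidence set $\{x_0\in\partial\Omega:\ g(x_0)=w(x_0)=0\}$, so there is no regular operator and no continuous boundary data to which a Hopf lemma could apply. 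Since the theorem is false without the non-degeneracy clause (take $g=\phi_1$ the principal eigenfunction when $\lambda_1=0$), the deferred step is not a routine verification but the whole point.

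A workable way to close the argument is to bypass the quotient entirely and prove that the hypotheses force $\lambda_1(-L,\Omega)>0$, then invoke the equivalence ``$L$ satisfies the maximum principle iff $\lambda_1>0$'' stated immediately afterwards in the paper. Concretely, let $\phi_1^*>0$ be the principal eigenfunction of the adjoint operator, $L^*\phi_1^*=-\lambda_1\phi_1^*$ in $\Omega$, $\phi_1^*=0$ on $\partial\Omega$. Green's identity gives $\int_\Omega(\phi_1^*\,Lg-g\,L^*\phi_1^*)\,dx=-\int_{\partial\Omega}g\,\big(a_{ij}\nu_i\partial_j\phi_1^*\big)\,d\sigma$, and the right-hand side is nonnegative by Hopf's lemma applied to $\phi_1^*$, strictly positive if $g|_{\partial\Omega}\not\equiv 0$. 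Rearranging, $\lambda_1\int_\Omega g\phi_1^*\,dx=-\int_\Omega\phi_1^*\,Lg\,dx-\int_{\partial\Omega}g\,a_{ij}\nu_i\partial_j\phi_1^*\,d\sigma\ge 0$, with strict inequality as soon as $Lg\not\equiv 0$ or $g|_{\partial\Omega}\not\equiv 0$; since $\int_\Omega g\phi_1^*\,dx>0$, this yields $\lambda_1>0$. This route costs some regularity bookkeeping (existence of $\phi_1^*$ and the validity of Green's identity) but avoids the uncontrolled quotient $w/g$ altogether.
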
 
There is  the maximum principle for the thin domains.
\begin{theorem}
$\exists \delta:= \delta (c_0, b, n)>0$ such that  if $L$ is an operator such that $a_{ij}(x)\geq c_0 Id, $ and there is $b>0$ such that both $\|b_i\|_{L^{\infty}}, \|c\|_{L^{\infty}}\leq b$ and if $\Omega$ is contained in a region $$\mathcal R= \{x \in \Omega ; a < \xi . x < a+ \delta, \xi \in \mathbb{S}^{n-1}\}; \; \, \mbox{where}\, \, \mathbb{S}^{n-1} \, \, \mbox{is the unit sphere of}\, \ \mathbb{R}^n, $$ then $L $ satisfies the maximum principle in $\Omega.$
\end{theorem}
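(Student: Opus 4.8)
The plan is to reduce the statement to the barrier criterion proved just above, namely that $L$ satisfies the maximum principle in $\Omega$ as soon as one exhibits a function $g \in \mathcal C^2(\Omega) \cap \mathcal C(\overline\Omega)$ with $g > 0$ on $\overline\Omega$ and $Lg \le 0$ in $\Omega$. The whole task then becomes the construction of an explicit positive supersolution on the thin slab, and the role of the width $\delta$ will be to control the one term that the sign condition on $c$ no longer kills.

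First I would exploit the slab geometry directly, without rotating coordinates. Writing $s = s(x) := \xi\cdot x - a \in (0,\delta)$ for $x \in \Omega$, I set
\[
g(x) = M - e^{\alpha s(x)},
\]
with two constants $\alpha > 0$ and $M > 0$ to be fixed. Differentiation gives $\partial_i g = -\alpha\xi_i e^{\alpha s}$ and $\partial_{ij} g = -\alpha^2 \xi_i\xi_j e^{\alpha s}$, whence
\[
Lg = -e^{\alpha s}\Big[\alpha^2\sum_{i,j} a_{ij}\xi_i\xi_j + \alpha\sum_i b_i\xi_i + c\Big] + cM .
\]
Since $\xi \in \mathbb S^{n-1}$, ellipticity yields $\sum_{i,j}a_{ij}\xi_i\xi_j \ge c_0$, while Cauchy--Schwarz together with $\|b_i\|_{L^{\infty}}, \|c\|_{L^{\infty}} \le b$ gives $\big|\sum_i b_i\xi_i\big| \le \sqrt n\, b$ and $|c|\le b$. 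Hence the bracket is bounded below by the quadratic $P(\alpha) := c_0\alpha^2 - \sqrt n\, b\,\alpha - b$, and this is precisely where the dimension enters the final $\delta$.

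Next I would choose the constants in the right order. Using $e^{\alpha s} \ge 1$ (as $s > 0$), the estimate above gives $Lg \le -P(\alpha) + bM$. I first pick $\alpha = \alpha(c_0,b,n)$ large enough that $P(\alpha) > 0$ (in fact with $P(\alpha)/b$ as large as desired), which is possible because the leading coefficient $c_0$ is positive. To guarantee $g > 0$ on $\overline\Omega$ it suffices that $M > e^{\alpha\delta}$, so I set $M = (1+\varepsilon)e^{\alpha\delta}$ for a fixed small $\varepsilon > 0$; then $g \ge \varepsilon e^{\alpha\delta} > 0$ throughout $\overline\Omega \subset \{a \le \xi\cdot x \le a+\delta\}$. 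With this choice the supersolution inequality $Lg \le 0$ reduces to the single scalar condition $b(1+\varepsilon)e^{\alpha\delta} \le P(\alpha)$, that is
\[
\delta \le \frac{1}{\alpha}\,\ln\frac{P(\alpha)}{b(1+\varepsilon)} =: \delta(c_0,b,n),
\]
a strictly positive threshold once $\alpha$ has been fixed. For every such $\delta$ the function $g$ is a positive supersolution, and the preceding barrier theorem applies to conclude that $L$ satisfies the maximum principle in $\Omega$.

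I expect the only real subtlety to be the interplay between the sign of $c$ and the constant $M$. When $c \le 0$ the term $cM$ is harmless and no smallness of $\delta$ is needed, so the content of the theorem lies in the case $c > 0$: there $cM \le bM$ is a genuinely positive contribution to $Lg$, and since keeping $g$ positive forces $M$ to be at least of size $e^{\alpha\delta}$, the inequality $bM \le P(\alpha)$ can only be met by making $e^{\alpha\delta}$ close to $1$, i.e. by taking the slab thin. Making this dependence precise --- choosing $\alpha$ before $\delta$ so as to avoid any circularity between the two constants --- is the main point to get right; everything else is the routine differentiation and the invocation of the barrier criterion above.
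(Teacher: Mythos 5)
Your barrier construction is correct and complete: the paper states this thin-domain maximum principle without proof (deferring to its references on maximum principles), and your argument --- building the positive supersolution $g = M - e^{\alpha(\xi\cdot x - a)}$, choosing $\alpha$ first so that $P(\alpha)=c_0\alpha^2-\sqrt n\,b\,\alpha-b>0$ is large, then taking $\delta$ small enough that $b(1+\varepsilon)e^{\alpha\delta}\le P(\alpha)$, and invoking the barrier criterion stated just before --- is exactly the classical route those references take. The two points that usually cause trouble, namely fixing $\alpha$ before $\delta$ to avoid circularity and recognizing that only the $cM$ term (when $c>0$) forces the slab to be thin, are both handled correctly.
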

There is also the maximum principle for the small domains
\begin{theorem}
Let $R>0$ be given and big enough, $ \exists \delta:=  \delta (c_0, b, n, R)>0$ such that if $\Omega \subset B_R (0)$ and $meas (\Omega)< \delta$ then  the maximum principle is satisfied by $L$ in $\Omega.$
\end{theorem}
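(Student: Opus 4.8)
The plan is to reduce the statement to the classical Aleksandrov--Bakelman--Pucci (ABP) maximum principle, which controls the supremum of a subsolution by its boundary values plus the $L^n$ norm of the right-hand side, with a constant depending only on $n$, the ellipticity constant $c_0$, and the product of $\|b_i\|_{L^\infty}$ with the diameter of the domain. Since $\Omega \subset B_R(0)$ forces $\mathrm{diam}(\Omega)\leq 2R$, this constant is bounded by a quantity $C=C(n,c_0,b,R)$, which is exactly the dependence we want in $\delta$. The zeroth-order term $c(x)$ is the only obstruction to the maximum principle, and the whole strategy is to absorb it into the smallness of $meas(\Omega)$. It is convenient to work with the operator $M=L-c(x)I$ already introduced in the excerpt, since $M$ has no zeroth-order term and is therefore in the regime where ABP applies directly.

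Following the definition of the maximum principle given above, I would argue by contradiction. Suppose $w\in\mathcal C^2(\Omega)\cap\mathcal C(\overline\Omega)$ satisfies $-Lw\leq 0$ in $\Omega$ and $w\geq 0$ on $\partial\Omega$, and set $u=-w$, so that $Lu\geq 0$ in $\Omega$ and $u\leq 0$ on $\partial\Omega$; assume, for contradiction, that $S:=\sup_\Omega u>0$. On the open set $\Omega^+=\{x\in\Omega:\ u(x)>0\}$ one has $u\leq 0$ on $\partial\Omega^+\cap\partial\Omega$ and $u=0$ on $\partial\Omega^+\cap\Omega$, hence $u^+\equiv 0$ on $\partial\Omega^+$; moreover, on $\Omega^+$ we have $0<u\leq S$ and $\|c\|_{L^\infty}\leq b$, so
$$Mu=Lu-c\,u\geq -c\,u\geq -\,b\,S \quad\text{in}\ \Omega^+.$$

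Applying the ABP estimate to $u$ on the domain $\Omega^+$, and using $(Mu)^-\leq bS$ together with $meas(\Omega^+)\leq meas(\Omega)$, would give
$$S=\sup_{\Omega^+}u\ \leq\ \sup_{\partial\Omega^+}u^+ + C\,\mathrm{diam}(\Omega)\,\|(Mu)^-\|_{L^n(\Omega^+)}\ \leq\ C\,(2R)\,b\,S\,meas(\Omega)^{1/n}.$$
Choosing $\delta=\delta(c_0,b,n,R)>0$ so small that $2CRb\,\delta^{1/n}<1$, the condition $meas(\Omega)<\delta$ yields $S\leq(2CRb\,meas(\Omega)^{1/n})\,S$ with coefficient strictly less than $1$, forcing $S\leq 0$, a contradiction. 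Hence $u\leq 0$, i.e. $w\geq 0$ in $\Omega$, and the maximum principle holds for $L$ in $\Omega$.

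The main obstacle, and the step that requires genuine care rather than routine computation, is the bookkeeping inside the ABP constant: one must check that its dependence on the diameter (entering through the scale-invariant product $b\cdot\mathrm{diam}$) stays uniform over all $\Omega\subset B_R(0)$, so that a single $\delta$ works for the whole class of small domains, and one must justify applying ABP on the sub-level domain $\Omega^+$, whose boundary is a priori irregular. A safe route is to establish the estimate first for the $W^{2,n}$ form of ABP, or to approximate $u$ by smooth strict subsolutions on smooth exhausting subdomains of $\Omega^+$ and pass to the limit; either way the regularity of $u$ guaranteed by $u\in\mathcal C^2(\Omega)\cap\mathcal C(\overline\Omega)$ suffices.
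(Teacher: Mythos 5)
The paper does not actually prove this theorem: it appears in the background Section~2, where the author explicitly defers all proofs of the maximum-principle toolbox to the references (\cite{PW}, \cite{GT}, \cite{PS}, \cite{Ev}), so there is no in-paper argument to compare against. Your ABP-based proof is the standard one for the ``small measure'' maximum principle and is correct: restricting to $\Omega^+=\{u>0\}$ kills the boundary term and lets you absorb the zeroth-order term via $(Mu)^-\leq bS$, and the ABP constant indeed depends only on $n$, the ellipticity constant $c_0$ and the scale-invariant product $\|b_i\|_{L^\infty}\cdot\mathrm{diam}(\Omega)\leq 2bR$, which gives a single $\delta(c_0,b,n,R)$ for the whole class $\Omega\subset B_R(0)$. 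Two small remarks. First, there is a sign slip in the setup: from the hypothesis as you wrote it, $-Lw\leq 0$, one gets $L(-w)\leq 0$, not $\geq 0$; what you actually use (and what the paper's Definition~2.4, read together with Corollary~2.3, intends despite its typo) is that $w$ is a supersolution, $Lw\leq 0$, so that $u=-w$ is a subsolution. Second, your closing worry about the irregularity of $\partial\Omega^+$ is unnecessary: the Aleksandrov--Bakelman--Pucci estimate in the form of \cite{GT}, Theorem~9.1, holds for an arbitrary bounded open set with $u\in\mathcal C^2(\Omega^+)\cap\mathcal C(\overline{\Omega^+})$ and requires no boundary regularity, so no approximation or $W^{2,n}$ detour is needed.
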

Considering the following eigenvalue problem
\begin{eqnarray*}
\begin{cases}\-L \phi_1= \lambda_1 \phi_1\; \;  in \; \;  \Omega\\
\phi_1= 0 \, \, on \, \; \partial \Omega\\
\end{cases}
\end{eqnarray*}
 At first, we begin by the following simple examples for the computation of the fundamental eigenvalue.
\begin{enumerate}
\item If $Lu= u" + \pi^2u, \Omega= (0, 1), u(0)= u(1)= 0,$  then $\lambda_1 (-L, \Omega)= 0.$\\
If $Lu= u" + c (x)u, u(0)= u(1)= 0$ with  $c(x)\leq \pi^2$ and $c(x)\neq \pi^2$ then $\lambda (-L, (0, 1))> 0.$
\item   If $Lu= u" + \pi^2 u, u(0)= u(a)= 0, \, \Omega= (0, a), a \neq 1,$ then  $ \lambda_1 (-L, \Omega)= \pi^2 (\frac{1}{a^2}- 1)$
\item If  $Lu= u" + k u,, u(0)= u(a)= 0, \, \Omega= (0, a),$ then  $ \lambda_1 (-L, \Omega)= \frac{\pi^2}{a^2}- k$
\end{enumerate}
\begin{theorem}
$L$ verifies the maximum principle iif $\lambda_1 > 0$
\end{theorem}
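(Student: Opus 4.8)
The plan is to prove both implications using the \emph{principal} eigenfunction $\phi_1$ of $-L$ associated with $\lambda_1$, which can be chosen strictly positive in $\Omega$ with $\phi_1 = 0$ on $\partial\Omega$. I read the maximum principle (MP) in the positivity-for-supersolutions form that is consistent with the sufficient conditions just established (a function $w$ with $Lw \le 0$ in $\Omega$ and $w \ge 0$ on $\partial\Omega$ satisfies $w \ge 0$ in $\Omega$). The crucial relation I shall use repeatedly is obtained from the eigenvalue equation $-L\phi_1 = \lambda_1 \phi_1$, namely $L\phi_1 = -\lambda_1\phi_1$, so that the sign of $L\phi_1$ is governed entirely by the sign of $\lambda_1$.

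For the implication $\lambda_1 > 0 \Rightarrow \mathrm{MP}$, I would simply exhibit $\phi_1$ as a positive supersolution and invoke the improved sufficient condition stated above (existence of $g$ with $g>0$ in $\Omega$, $Lg\le 0$, and $Lg\not\equiv 0$). Indeed, when $\lambda_1 > 0$ and $\phi_1 > 0$ in $\Omega$ we have $L\phi_1 = -\lambda_1\phi_1 < 0$ in $\Omega$, hence $L\phi_1 \le 0$ and $L\phi_1 \not\equiv 0$. Taking $g = \phi_1$ in that criterion yields at once that $L$ satisfies the maximum principle in $\Omega$. This direction is essentially immediate once the positive eigenfunction is available.

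For the converse $\mathrm{MP} \Rightarrow \lambda_1 > 0$, I would argue by contraposition: assume $\lambda_1 \le 0$ and construct a function contradicting MP. The natural candidate is $w = -\phi_1$. Since $L(-\phi_1) = -L\phi_1 = \lambda_1\phi_1$ and $\phi_1 > 0$, the assumption $\lambda_1 \le 0$ gives $Lw = \lambda_1\phi_1 \le 0$ in $\Omega$, while $w = -\phi_1 = 0 \ge 0$ on $\partial\Omega$. The maximum principle would then force $w = -\phi_1 \ge 0$ in $\Omega$, i.e. $\phi_1 \le 0$, contradicting $\phi_1 > 0$. Hence $\lambda_1 > 0$. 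I note that this single test function covers both $\lambda_1 = 0$ and $\lambda_1 < 0$ uniformly, so no case distinction is needed.

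The main obstacle lies not in either short argument but in the ingredient on which both rest: the existence of a \emph{real} principal eigenvalue $\lambda_1$ carrying a \emph{strictly positive} eigenfunction. For $L$ in divergence (self-adjoint) form this follows from minimizing the Rayleigh quotient, observing that $|\phi_1|$ is also a minimizer, and applying the strong maximum principle to conclude $\phi_1 > 0$ inside $\Omega$; for a general non-self-adjoint $L$ with $c$ of arbitrary sign one instead applies the Krein--Rutman theorem to the compact, positivity-improving resolvent of $L$ after a large zeroth-order shift, which keeps the principal eigenfunction positive. I would also take care with the sign conventions so that $\phi_1$ (resp. $-\phi_1$) is tested on the correct side of the inequality; the one-dimensional computations just above, for instance $\lambda_1 = \pi^2(a^{-2}-1)$ on $(0,a)$, provide a convenient consistency check, since there MP holds precisely when $a<1$, i.e. exactly when $\lambda_1 > 0$.
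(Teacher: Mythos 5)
Your argument is correct, and in fact the paper gives no proof of this statement at all: it appears in the ``basic tools'' section, where the author explicitly defers all proofs to the references (\cite{PW}, \cite{GT}, \cite{PS}, \cite{Ev}). Your two implications are the standard ones and they mesh correctly with the conventions used in the text: the sign convention $-L\phi_1=\lambda_1\phi_1$ is confirmed by the paper's one-dimensional examples (e.g.\ $\lambda_1(-L,(0,a))=\pi^2(a^{-2}-1)$), and the definition of MP you adopt ($Lw\le 0$ in $\Omega$, $w\ge 0$ on $\partial\Omega$ implies $w\ge 0$) matches the corollary stated just before the definition. For the forward direction you correctly use the \emph{improved} barrier criterion rather than the first one, which is essential since $\phi_1$ vanishes on $\partial\Omega$ and so fails the hypothesis $g>0$ on $\overline{\Omega}$; the condition $L\phi_1=-\lambda_1\phi_1\not\equiv 0$ is exactly what rescues the argument. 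For the converse, testing MP against $-\phi_1$ is the standard device and handles $\lambda_1=0$ and $\lambda_1<0$ together, as you note. You are also right that the only genuinely nontrivial ingredient is the existence of a real principal eigenvalue with a strictly positive eigenfunction, which the paper tacitly assumes; your sketch (Rayleigh quotient plus strong maximum principle in the self-adjoint case, Krein--Rutman applied to the resolvent after a zeroth-order shift in general) is the accepted way to supply it, so the proposal is complete modulo that classical input.
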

Let us consider $L= \Delta + k^2 I, \Omega, $  a bounded regular domain in $\mathbb{R}^N.$
and  the following eigenvalue problem
\begin{eqnarray}
\begin{cases}-\Delta v_1 - k^2 v_1= \lambda v_1 \; \; \mbox{in}\; \;   \Omega\\
v_1= 0 \; \; \mbox{on} \; \; \partial \Omega.
\end{cases}
\end{eqnarray}
Then, thanks to the  above theorem, the maximum principle  is satisfied iif  
\begin{eqnarray*}
\lambda_1= \inf \{\int_{\Omega}|\nabla v_1|^2- k^2, v_1 \in H_0^1 (\Omega)\backslash \{0\}, \int_{\Omega}v_1^2 dx= 1\}> 0.
\end{eqnarray*}
Otherwise,
\begin{eqnarray*}
\lambda_1= \int_{\Omega}|\nabla \phi_1|^2 dx - k^2> 0, \int_{\Omega} \phi_1^2 dx= 1,  \phi_1 \in H_0^1 (\Omega)\backslash \{0\}. 
\end{eqnarray*}
We consider  also   the following problem in the ball centered at origin and of radius $R, B (0, R):= B_R\subset \mathbb{R}^2$
\begin{eqnarray}
\begin{cases}-\Delta u = \alpha  u \; \; \mbox{in}\; \;   B_R\\
u= 0 \; \; \mbox{on} \; \; \partial B_R.
\end{cases}
\end{eqnarray}
The eigenvalues of the above problem are : $\alpha  = (\frac{j_{n, m}}{R})^2$ for $n \geq 0, m\geq 1$ where $j_{n,m}$ are the $m-{th}$  positive roots of the Bessel function of order $n,$ $J_n (r).$
 And $\alpha_1:=  (\frac{j_{n, 1}}{R})^2$ is the smallest one.\\
 The next step  we aim to discuss in this work is, if we set    $\alpha_1 =k^2+ \lambda_1, $ for which condition $\lambda_1> 0?$\\
 We can see that $\lambda_1 >0$ if and only if $\alpha_1> k^2.$ \\
 All our work (Sections $3$ and $4$) will rely on this above sufficient  and necessary condition which leads us to use maximum principles.
 \begin{remark}
 If  $k$ is such that, $|k|< \frac{j_{n, 1}}{R}$ then $\lambda_1> 0.$\\
 If $k$ is fixed at first, playing on the values of $R; 0< R< \frac{j_{n,1}}{|k|},$ we have  $\lambda_1>0.$
 \end{remark}

\subsection{Strong  maximum principle}
\begin{definition}Interior sphere condition (ISC):\\
An open set  $\Omega \subset \mathbb{R}^N$  (or its boundary )satisfies  (ISC) if:\\ 
$\forall p \in \partial \Omega, \exists B= B_{\rho}(a)$ (a ball centered in $a$ with radius $\rho> 0$) such that 
$$B \subset \Omega,\;  \mbox{and }\; p \in \partial B.$$ 
\end{definition}
\begin{lemma}
Let  $\Omega$ be a bounded open set, $p\in \partial \Omega$ and $\Omega$ satisfying (ISC).
Suppose that  $u \in \mathcal C^2 (\Omega)$ may be extended by continuity in $p$ with value equals $u(p)$ and such that $Mu \geq 0$ in $\Omega,$\\  $u(p)> u(x)\; \;  \forall x \in \Omega.$\\
Let $B= B_{\rho}(a) $ the interior sphere such that $p\in \partial B$ and $\xi$ an outward direction in $p$\\ ($<\xi, p-a>>0$), then 
$$\lim_{t \searrow 0^{+}}inf \frac{u(p)- u(p-t\xi)}{t}>0\; \;  (t> 0)$$
\end{lemma}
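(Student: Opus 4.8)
The statement is the classical Hopf boundary point lemma, and the plan is to prove it by constructing an explicit barrier and comparing it with $u$ through the weak maximum principle for $M$ stated above. After translating so that the center of the interior ball is the origin, so that $B=B_{\rho}(0)$ and $|p|=\rho$, and (replacing $B$ by a slightly smaller internally tangent ball at $p$ if necessary, so that $\overline{B}\setminus\{p\}\subset\Omega$ and $u$ is continuous on the closure of the region below) I would introduce the comparison function
$$v(x)=e^{-\alpha|x|^2}-e^{-\alpha\rho^2},\qquad \alpha>0,$$
which vanishes on $\partial B_{\rho}(0)$ and is positive inside. The entire argument takes place on the annulus $A=B_{\rho}(0)\setminus\overline{B_{\rho/2}(0)}$, on which $|x|\geq\rho/2$.

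First I would compute $Mv$. Since $\partial_i v=-2\alpha x_i e^{-\alpha|x|^2}$ and $\partial_{ij}v=(4\alpha^2 x_i x_j-2\alpha\delta_{ij})e^{-\alpha|x|^2}$, one obtains
$$Mv=e^{-\alpha|x|^2}\Big(4\alpha^2\sum_{i,j}a_{ij}x_i x_j-2\alpha\sum_i a_{ii}-2\alpha\sum_i b_i x_i\Big).$$
The ellipticity bound $\sum_{i,j}a_{ij}x_i x_j\geq c_0|x|^2\geq c_0\rho^2/4$ on $A$ lets the leading $\alpha^2$-term dominate the two $O(\alpha)$ terms, whose coefficients are bounded because $a_{ij},b_i\in\mathcal{C}(\overline{\Omega})$ and $A$ is bounded. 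Hence there is $\alpha$ large enough that $Mv\geq 0$ on $A$. This is the technical heart of the argument and the step I expect to require the most care, since it is precisely the annular geometry, keeping $|x|$ bounded away from $0$, that guarantees the correct sign.

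Next I would exploit the hypothesis $u(p)>u(x)$. The inner sphere $\partial B_{\rho/2}(0)$ is a compact subset of $\Omega$, so $u(x)\leq u(p)-\eps$ there for some $\eps>0$. Setting $w=u+\eta v-u(p)$ with $\eta>0$ to be chosen, I have $Mw=Mu+\eta Mv\geq 0$ on $A$; note that the constant $u(p)$ is annihilated by $M$, which is exactly why the statement is phrased with $M$ rather than $L$. On the outer sphere $v=0$ and $u\leq u(p)$, so $w\leq 0$; on the inner sphere $w\leq -\eps+\eta\max_{\overline{A}}v$, which is $\leq 0$ once $\eta$ is small enough. Thus $w\leq 0$ on $\partial A$, and the weak maximum principle for $M$ yields $w\leq 0$ throughout $\overline{A}$, that is $u(x)\leq u(p)-\eta v(x)$.

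Finally I would read off the boundary derivative. Evaluating the last inequality at $x=p-t\xi$ and using $v(p)=0$,
$$\frac{u(p)-u(p-t\xi)}{t}\geq \eta\,\frac{v(p-t\xi)-v(p)}{t}\xrightarrow[t\searrow 0^+]{}-\eta\,\nabla v(p)\cdot\xi.$$
Since $\nabla v(p)=-2\alpha e^{-\alpha\rho^2}p$ and $\langle\xi,p\rangle=\langle\xi,p-a\rangle>0$ by the choice of outward direction, the right-hand side equals $2\alpha\eta e^{-\alpha\rho^2}\langle\xi,p\rangle>0$. As the lower bound converges to a strictly positive limit, the $\liminf$ of the quotient is bounded below by this positive number, which is exactly the claimed strict positivity.
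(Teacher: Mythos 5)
Your proof is correct: it is the classical Hopf barrier argument with the comparison function $e^{-\alpha|x|^2}-e^{-\alpha\rho^2}$ on the annulus $B_{\rho}(0)\setminus\overline{B_{\rho/2}(0)}$, and every step (the sign of $Mv$ for large $\alpha$, the choice of $\eta$ on the inner sphere, the application of the weak maximum principle for $M$, and the final differential-quotient estimate) checks out. Note that the paper itself gives no proof of this lemma; it is quoted in Section 2 with a pointer to the standard references (Protter--Weinberger, Gilbarg--Trudinger), and your argument is precisely the proof found there, so there is nothing to contrast.
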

\begin{theorem} (Strong maximum principle)\\
Let us consider $\Omega$ be a connected  bounded open set,  $L$ as above with $c(x)\leq 0.$
One supposes that $u \in \mathcal C^2 (\Omega)$ and  $Lu \geq 0$ in $\Omega.$
\begin{itemize}
\item  Case: $c\equiv 0$: if $u$ reaches its maximum in $\Omega$ then  $u$ is a constant
\item Case: $c (x)\leq 0$ If $u$ reaches its maximum in $\Omega$ and  this maximum is non negative then $u$ is constant.
\end{itemize}
\end{theorem}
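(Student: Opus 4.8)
The plan is to derive the result from the boundary point Lemma stated just above, by the classical propagation-of-maximum argument, handling the two cases together as far as possible and splitting only at the final step. Write $m:=\max_{\overline\Omega}u$ for the attained maximum and set $\Gamma:=\{x\in\Omega:\ u(x)=m\}$ and $\Omega^{-}:=\{x\in\Omega:\ u(x)<m\}$. By continuity $\Gamma$ is relatively closed in $\Omega$ and, since the maximum is reached at an interior point, nonempty; $\Omega^{-}$ is open. As $\Omega$ is connected, it suffices to prove $\Omega^{-}=\emptyset$, for then $u\equiv m$. I therefore argue by contradiction, assuming $\Omega^{-}\neq\emptyset$.

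The first step is to produce an interior ball tangent to $\Gamma$. If $\Omega^{-}\neq\emptyset$ and $\Gamma\neq\emptyset$, connectedness forces the relative boundary $\partial\Omega^{-}\cap\Omega$ to be nonempty, so I may choose $x_{0}\in\Omega^{-}$ strictly closer to $\Gamma$ than to $\partial\Omega$ and let $B=B_{\rho}(x_{0})$ be the largest open ball centered at $x_{0}$ contained in $\Omega^{-}$. Then $\overline B\subset\Omega$, one has $u<m$ on $B$, and there is a contact point $p\in\partial B\cap\Gamma$, so $u(p)=m>u(x)$ for all $x\in B$; shrinking to a smaller internally tangent ball I may assume $B$ satisfies the interior sphere condition at $p$ with $p$ the only contact point. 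Since $u\in\mathcal C^{2}(\Omega)$ attains the interior maximum $m$ at $p$, we have $\nabla u(p)=0$, hence every directional derivative of $u$ at $p$ vanishes.

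In the case $c\equiv 0$ the operator is $L=M$ and $Mu=Lu\ge 0$, so the boundary point Lemma applies on $B$: for an outward direction $\xi$ at $p$ it yields $\liminf_{t\searrow 0^{+}}\bigl(u(p)-u(p-t\xi)\bigr)/t>0$, i.e.\ a strictly positive outward directional derivative at $p$. This contradicts $\nabla u(p)=0$, so $\Omega^{-}=\emptyset$ and $u$ is constant.

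The delicate case, which I expect to be the crux, is $c\le 0$ with $m\ge 0$: the Lemma cannot be used verbatim because $u$ only satisfies $Lu\ge 0$, and $Mu=Lu-c\,u\ge -c\,u$ has no definite sign on $\Omega^{-}$. To handle it I would reprove the boundary estimate by the classical barrier method, which is legitimate here since the weak maximum principle for $L$ with $c\le 0$ has already been established. On the annulus $A=B_{\rho}(x_{0})\setminus\overline{B_{\rho/2}(x_{0})}$ put $v(x)=e^{-\alpha r^{2}}-e^{-\alpha\rho^{2}}$ with $r=|x-x_{0}|$; using uniform ellipticity ($a_{ij}\xi_{i}\xi_{j}\ge c_{0}|\xi|^{2}$) and the bounds on $b_{i},c$, a direct computation shows $Lv\ge 0$ on $A$ once $\alpha$ is large, because the second–order term grows like $\alpha^{2}r^{2}\ge \alpha^{2}(\rho/2)^{2}$ and dominates the bounded contribution of $c\,v$. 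Setting $w=u-m+\epsilon v$ one checks $w\le 0$ on the inner sphere (where $u\le m-\eta$ for some $\eta>0$, for small $\epsilon$) and on the outer sphere (where $v=0$ and $u\le m$), while $Lw=Lu-c\,m+\epsilon Lv\ge 0$ since $c\,m\le 0$ uses exactly $c\le 0$ and $m\ge 0$. The weak maximum principle then gives $w\le 0$ on $A$ with $w(p)=0$, so $p$ maximizes $w$ and $\partial w/\partial\xi(p)\ge 0$; as $\partial v/\partial\xi(p)=-2\alpha\rho\,e^{-\alpha\rho^{2}}<0$, this forces $\partial u/\partial\xi(p)>0$, again contradicting $\nabla u(p)=0$. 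Hence $\Omega^{-}=\emptyset$ and $u\equiv m$ in this case too. The only genuinely new ingredient beyond the quoted results is the barrier computation verifying $Lv\ge 0$ for large $\alpha$, and that is where the real work lies.
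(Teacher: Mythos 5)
Your argument is correct, and it is the classical Hopf-style proof (interior tangent ball at the set $\{u=m\}$, boundary point lemma, contradiction with $\nabla u(p)=0$). There is nothing in the paper to compare it against: the author explicitly states at the start of Section 2 that the maximum-principle results are quoted without proof, with references to Protter--Weinberger, Gilbarg--Trudinger, Pucci--Serrin and Evans; your write-up essentially reconstructs the standard proof from those sources (e.g.\ Gilbarg--Trudinger, Lemma 3.4 and Theorem 3.5). Your identification of the one real gap is also accurate: the paper's Lemme 2.12 is stated only for the operator $M$ without zeroth-order term, so in the case $c\le 0$, $m\ge 0$ one must either redo the barrier computation for $L$ (as you do, using $-c\,m\ge 0$ and the sign $c\,v\le 0$ on the annulus where $v\ge 0$) or invoke the stronger form of the boundary point lemma directly. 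All the supporting steps (nonemptiness of the relative boundary of $\{u<m\}$ by connectedness, the choice of $x_0$ closer to $\Gamma$ than to $\partial\Omega$, the use of the already-stated weak maximum principle for $L$ with $c\le 0$ on the annulus) check out.
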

\begin{theorem} (Strong boundary  maximum principle)\\
Let us consider $\Omega$ be a connected  bounded open set,  $L$ as above with $c(x)\leq 0.$
One supposes that $u \in \mathcal C^2 (\Omega)$ and continuous at $p\in \partial \Omega$ and  $Lu \geq 0$ in $\Omega $ with $u(p)= \displaystyle \max_{\overline{\Omega}}u.$\\
In addition, one supposes that $\Omega $ satisfies (ISC) at $p$ and if $c\not\equiv 0$,  $u(p)\geq 0.$
Then, we have:
\begin{itemize}
\item or $u$ is constant
\item or  $\frac{\partial u}{\partial \xi}> 0$ where $\xi$ an outward direction in $p$
\end{itemize}
\end{theorem}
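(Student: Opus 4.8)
My plan is to argue by a dichotomy on whether the maximum value $M:=u(p)=\max_{\overline{\Omega}}u$ is attained at an interior point of $\Omega$ or only at the boundary point $p$. First I record that the hypotheses force $M\ge 0$ whenever $c\not\equiv 0$ — this is exactly the supplementary assumption $u(p)\ge 0$ — while if $c\equiv 0$ no sign restriction is needed. Suppose there is $x_0\in\Omega$ with $u(x_0)=M$, so that the maximum is attained inside $\Omega$. Then I would simply invoke the Strong Maximum Principle (the preceding theorem): when $c\equiv 0$ it applies directly, and when $c\le 0$ its requirement that the interior maximum be nonnegative is met because $M\ge 0$. It yields that $u$ is constant on the connected set $\Omega$, which is the first alternative of the conclusion.

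In the remaining case $u(x)<M$ for every $x\in\Omega$, and I must produce the strict inequality $\partial u/\partial\xi(p)>0$. This is precisely the content of the Hopf-type Lemma preceding the theorem, but that lemma is stated for the operator $M$ without zeroth-order term, whereas here $L=M+c\,I$ with $c\le 0$; I therefore adapt its barrier argument to $L$. Let $B=B_{\rho}(a)$ be the interior sphere with $p\in\partial B$, put $r:=|x-a|$, and on the annulus $A:=B_{\rho}(a)\setminus\overline{B_{\rho/2}(a)}$ introduce the comparison function $h(x)=e^{-\alpha r^2}-e^{-\alpha\rho^2}$. A direct computation of $Lh$ shows that, thanks to uniform ellipticity, the boundedness of the coefficients $b_i,c$, the sign $c\le 0$, and the lower bound $r\ge\rho/2$ valid on $A$, one can fix $\alpha$ large enough that $Lh\ge 0$ throughout $A$.

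For the comparison step, note that on $\partial B_{\rho/2}(a)$, a compact subset of $\Omega$, the strict inequality gives $u\le M-\delta$ for some $\delta>0$, while $h=0$ on $\partial B_{\rho}(a)$; hence for $\varepsilon>0$ small the function $w:=u+\varepsilon h-M$ satisfies $w\le 0$ on $\partial A$. Moreover $Lw=Lu+\varepsilon Lh-cM\ge -cM\ge 0$, the last inequality using $M\ge 0$ and $c\le 0$. Applying the weak maximum principle (the Corollary above, to $-w$) gives $w\le 0$ in $A$, that is $u+\varepsilon h\le M$; since $h(p)=0$ we have $w(p)=0$, so $w$ attains its maximum over $\overline{A}$ at $p$. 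Taking the one-sided outward (liminf) derivative along $\xi$ as in the Lemma then yields $\partial u/\partial\xi(p)\ge -\varepsilon\,\partial h/\partial\xi(p)$, and since $\nabla h(p)\cdot\xi=-2\alpha e^{-\alpha\rho^2}\langle p-a,\xi\rangle<0$ by the outwardness condition $\langle\xi,p-a\rangle>0$, we conclude $\partial u/\partial\xi(p)>0$, the second alternative.

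The main obstacle is precisely the passage from the stated Lemma, valid for the principal-plus-first-order part $M$, to the full operator $L$ carrying a nonpositive zeroth-order term: concretely, controlling the term $-cM$ appearing in $Lw$, which is the only place where the extra hypothesis $u(p)\ge 0$ (when $c\not\equiv 0$) genuinely enters. Everything else reduces to the same barrier estimate already used in the Lemma's proof, so I expect no further difficulty beyond the routine verification that $\alpha$ can be chosen to make $Lh\ge 0$ and that $\varepsilon$ can be taken small enough to keep $w\le 0$ on $\partial A$.
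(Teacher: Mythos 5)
The paper does not actually prove this theorem: Section 2 is announced as a survey of maximum principles whose proofs are deferred to the references (Protter--Weinberger, Gilbarg--Trudinger, Pucci--Serrin, Evans), so there is no in-paper argument to compare yours against. Judged on its own, your proof is the standard and correct one: the dichotomy interior maximum versus strictly smaller interior values, the reduction of the first branch to the strong maximum principle (with the hypothesis $u(p)\ge 0$ supplying the required nonnegativity of the interior maximum exactly when $c\not\equiv 0$), and the Hopf barrier $h=e^{-\alpha r^2}-e^{-\alpha\rho^2}$ on the annulus, with the zeroth-order term absorbed by taking $\alpha$ large and the term $-cM$ in $Lw$ controlled by $c\le 0$ and $M\ge 0$. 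This correctly identifies the only place where the sign hypothesis on $u(p)$ is genuinely needed.

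One small technical point you should make explicit: to apply the weak maximum principle to $w$ on the annulus $A$ you need $w\in\mathcal C(\overline A)$, but $\partial B_\rho(a)$ may meet $\partial\Omega$ at points other than $p$, where $u$ is only assumed continuous at $p$. The standard remedy is to replace the interior ball by a smaller ball internally tangent to $\partial\Omega$ at $p$ alone (contained in $B\cup\{p\}$), after which $\overline A\setminus\{p\}\subset\Omega$ and the comparison goes through; the same care is implicitly needed in the paper's Lemma 2.13. With that adjustment the argument is complete.
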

\subsection{Antimaximum principle}
In our study, maximum principle plays a key to reach our aim. 
Seminal works in \cite{CP} on an antimaximum principle for second order elliptic  shows that the powerful tool of maximum principle fails in front of simple but interesting questionings. The authors proved the antimaximum principle for a general class of linear boundary value problem of the form
\begin{eqnarray*}
\begin{cases}L u -\mu u = f\; \;  in \; \;  \Omega\\
 Bu= 0  \, \, on \; \; \partial \Omega

\end{cases}
\end{eqnarray*}
 where $\Omega$ is a bounded  domain in $\mathbb{R}^N$ with smooth boundary $\partial \Omega, \mu \in \mathbb{R}, f$  is a  function sufficiently smooth defined  on $\Omega.$ $L$  denotes a second order elliptic differential operator and $B$ a first order boundary operator, for more details see \cite{CP}.\\
Let us recall  the result of the following  simple but instructive example. One considers   $b$ a function sufficiently smooth defined $\partial \Omega$ with $b\geq 0.$
 \begin{eqnarray*}
\begin{cases}- \Delta u -\mu u = f\; \;  in \; \;  \Omega\\
\frac{\partial u}{\partial \nu} + bu= 0  \, \, on \; \; \partial \Omega
\end{cases}
\end{eqnarray*}
Let $\mu_0$ be the principal eigenvalue of the Laplace operator $-\Delta$ on $\Omega.$ It is well known that if $\mu < \mu_0,$ the strong maximum principle holds: if $f(x)\geq 0 (\not \equiv 0)$  in $\Omega,$ then $u(x)>0$ for any $x \in \Omega.$
For certain values of $\mu > \mu_0,$ the complete opposite of the maximum principle holds: given $f(x)\geq 0 (\not \equiv 0),$ there exists  $\delta >0$ such that if $\mu_0 < \mu < \mu_0 + \delta,$ then $u(x)< 0$ for any $x \in \Omega.$\\
There are  numerous situations where the maximum principle cannot be used and the moving plane techniques are not an adequate argument to bring responses on symmetry in partial differential equations problems. \\
In our work we shall focus on the situations where maximum principle and moving planes techniques hold.
\section{Symmetry}
The symmetry problems of domains had  a great interest at least  fifty years ago. And until our days, they continue to attract much interest. One can mention the following  references as  first famous  symmetry results for domains: \cite{Ser}, \cite{Wein}, \cite{Re}, \cite{GL},\cite{BH},\cite{Ra}  and \cite{BN}   and references therein. 

The theorem proved in the here belongs to the family of symmetry results introduced by J. Serrin. And it is proposed under the hypothesis of Maximum principle. This mean that we suppose thta the first eigenvalue $\lambda_1$  of the operator $-\Delta  - k^2 I $ is positive.
\begin{theorem}\label{thsym}
Let
\begin{enumerate}
\item $\Omega$  be an open and bounded set of  $\R^N$ contenant $K$ with  $\partial \Omega$ of class $\mathcal C^2;$
\item $K$  be symmetric with respect to the hyperplane  that we call  and expressed by  $T_0= \{x_N= 0\};$
\item One supposes that there is  a solution   $u\in  \mathcal C^2 (\bar{\Omega}\backslash K)$ of  the following overdetermined problem
\begin{eqnarray}
\displaystyle \left\{\begin{array}{c c c c c}
\Delta u + k^2 u &=&0&  \mbox{in} &\Omega \backslash K\\
u &=&  1&\mbox{on} &\partial K \\
u &=&0 &\mbox{on} & \partial \Omega\\
|\nabla u| &=& c_1& \mbox{on} &  \partial \Omega\\
(c_1 > 0)
\end{array}\right.
\end{eqnarray}
\item  In addition we suppose that  $K$ is convex in the  direction $x_N.$
\end{enumerate}
Then \\
$\Omega$ is  symmetric with respect to   the hyperplane  $T_0. $ Moreover $u$ is symmetric with respect to $T_0.$
\end{theorem}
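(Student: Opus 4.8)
The plan is to adapt the Alexandrov moving-planes technique, following Serrin's original argument, to this overdetermined Helmholtz-type problem on the annular region $\Omega\backslash K$. The crucial structural fact making the method work is assumption (3) together with the earlier maximum-principle theory: since we assume $\lambda_1$ of $-\Delta-k^2 I$ is positive, the operator $L=\Delta+k^2 I$ satisfies the maximum principle, and hence the strong maximum principle (Theorem in the Strong maximum principle subsection) and the boundary-point lemma (the Hopf-type Lemma stated via the interior sphere condition) are available for comparisons of solutions of $\Delta w+k^2 w=0$. This is exactly the ingredient that Serrin's method needs, and it is why the hypothesis $\lambda_1>0$ is imposed.

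First I would set up the moving planes in the $x_N$-direction. For $\lambda$ large, consider the hyperplane $T_\lambda=\{x_N=\lambda\}$ lying above $\Omega$, and slide it downward (decreasing $\lambda$). For each $\lambda$, let $\Sigma_\lambda$ denote the portion of $\Omega\backslash K$ above $T_\lambda$ and let $\Sigma_\lambda'$ be its reflection across $T_\lambda$. Define the reflected function $u_\lambda(x)=u(x')$, where $x'$ is the reflection of $x$ in $T_\lambda$. Because $\Delta u+k^2u=0$ is invariant under reflection, $u_\lambda$ also solves the same Helmholtz equation on $\Sigma_\lambda'$. I would then study the difference $w=u-u_\lambda$, which satisfies $\Delta w+k^2 w=0$ on the overlap $\Sigma_\lambda'\cap(\Omega\backslash K)$, and apply the maximum principle / comparison principle (Corollary in the weak-maximum-principle subsection) to show $w\le 0$ there, using that $w\ge 0$ reverses appropriately on the relevant boundary pieces. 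The symmetry of $K$ (assumption 2) and its convexity in the $x_N$-direction (assumption 4) are precisely what guarantee that the reflected cap stays inside the domain and that the obstacle boundary $\partial K$ contributes correctly — this keeps the comparison region geometrically admissible as the plane moves.

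The standard continuation step is then to define the critical value $\lambda_0$ as the first value at which the moving plane either becomes internally tangent to $\partial\Omega$ at a non-polar point or $T_\lambda$ reaches a position where the reflected normal of $\partial\Omega$ coincides with the normal. At $\lambda_0$ one shows $w\equiv 0$ on the overlapping cap, which forces $\Omega$ (and $u$) to be symmetric across $T_{\lambda_0}$. To locate $\lambda_0$ at $T_0$ — i.e. to match the hyperplane of symmetry of $K$ — I would use the overdetermined condition $|\nabla u|=c_1$ on $\partial\Omega$ (the fourth boundary relation in (3)). The constancy of the Neumann data is what rigidifies the configuration: at a would-be tangency point the Hopf boundary lemma yields a strict inequality on the normal derivative unless $w\equiv 0$, and the equal-gradient condition contradicts that strict inequality, forcing $w\equiv 0$ and hence symmetry. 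Matching this plane of symmetry to $T_0$ uses the assumed symmetry of $K$ about $T_0$.

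The main obstacle I expect is twofold. First, the presence of the inner obstacle $K$ breaks the simply-connected picture of Serrin's original problem: one must verify carefully that the moving-plane comparison region and the reflected obstacle boundary interact correctly, which is where convexity of $K$ in the $x_N$-direction (assumption 4) does the essential work, and one should check that the reflected cap does not prematurely intersect $\partial K$ in a way that invalidates the comparison. Second, and more delicate, is the sign structure: unlike the Dirichlet-Laplace setting where $c(x)=0$, here the operator carries the zeroth-order term $+k^2u$, so the comparison and Hopf lemma are only valid under the positivity-of-$\lambda_1$ regime, and near the corner structure at $\partial K$ one must ensure the relevant maximum principle still applies on the (possibly thin) overlap regions — here the thin-domain and small-domain maximum principles (the corresponding Theorems in Section~2) provide the safeguard that lets the method start for $\lambda$ near the top and close up at $T_0$.
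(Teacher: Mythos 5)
Your proposal follows essentially the same route as the paper: Alexandrov moving planes in the $x_N$-direction, comparison of $u$ with its reflection across $T_\lambda$ via the maximum principle (valid under the standing hypothesis that $\lambda_1$ of $-\Delta-k^2I$ is positive), the convexity of $K$ in the $x_N$-direction (equivalently, the paper's property $(\mathcal P_0)$) to keep the reflected cap admissible, and the constant Neumann data $|\nabla u|=c_1$ to rule out the critical positions. The one step you leave underspecified is the second critical case, where $T_{\lambda_0}$ becomes orthogonal to $\partial\Omega$ at a point $x_0$: there the first normal derivative of $w=u-u_\lambda$ vanishes automatically, so the first-order Hopf lemma gives no contradiction, and one must instead invoke Serrin's corner lemma (the paper's Lemma \ref{lemse}, which forces $\frac{\partial w}{\partial \nu}(x_0)>0$ or $\frac{\partial^2 w}{\partial \nu^2}(x_0)>0$ unless $w\equiv 0$) to close the argument.
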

We are going to  give the main  steps and keys for the proof of this result. \\
At first we begin by setting:
\begin{enumerate}
\item  $a = \displaystyle \inf_{x=(x_1,\cdots, x_N)} x_N;$  and  we can always suppose that $a$ is negative because of the symmetry hypothesis on $K$ and the fact that $T_0= \{x_N=0\}$
\item $T_{\lambda}$ the hyperplane characterized by  $(x_N=\lambda).$ We quote that  $T_{\lambda}$ is parallel to  $T_0$for any $\lambda$;
\item Let $(\mathcal P_0)$ be the following property:
$$\forall \lambda \in \R, a \leq \lambda \leq 0; \sigma_{\lambda}(K_{\lambda}^{+})\subset K_{\lambda}^{-}$$
where  $ K_{\lambda}^{+}$ is the  part of K  $K$ situated at the top side of   $T_{\lambda},$ $\sigma_{\lambda}$ is the orthogonal symmetry with respect to  $T_{\lambda}, $ $\sigma_{\lambda}(K_{\lambda}^{+})$  is the symmetric set of $ K_{\lambda}^{+}$ with respect to  $T_{\lambda}$  and   $K_{\lambda}^{-}= K\backslash K_{\lambda}^{+}.$
\end{enumerate}
Then we have the following proposition.
\begin{proposition}\label{prop}
$K$ is  convex in the direction of  $x_N$ if and only if  $(\mathcal P_0)$is satisfied.
\end{proposition}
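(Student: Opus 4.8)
The plan is to reduce the equivalence to a one-dimensional statement about the slices of $K$ by lines parallel to the $x_N$-axis. For $x'=(x_1,\dots ,x_{N-1})$ set $I_{x'}=\{t\in\R:(x',t)\in K\}$. The symmetry of $K$ with respect to $T_0=\{x_N=0\}$ means exactly that each slice is symmetric about the origin, $I_{x'}=-I_{x'}$, and since $a=\inf x_N$ we have $I_{x'}\subset[a,-a]$. By the very definition, $K$ is convex in the direction $x_N$ if and only if every slice $I_{x'}$ is an interval (a connected subset of $\R$). On the other hand, because $\sigma_\lambda$ only moves the last coordinate via $t\mapsto 2\lambda-t$ and fixes $x'$, property $(\mathcal P_0)$ is equivalent to the following purely one-dimensional condition, imposed slice by slice: for every $x'$ and every $\lambda\in[a,0]$, reflecting across the point $\lambda$ the part of $I_{x'}$ lying on the relevant side of $\lambda$ lands back inside $I_{x'}$. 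Thus it suffices to prove, for a fixed symmetric set $I\subset[a,-a]$, that $I$ is an interval if and only if this reflection condition holds for all $\lambda\in[a,0]$.

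First I would treat the implication $(\mathcal P_0)\Rightarrow$ convexity. Assuming the reflection property on a symmetric slice $I$, write $t_0=\inf I\le 0$, so that $\sup I=-t_0$ by symmetry. Given an arbitrary interior value $r$ with $t_0<r<-t_0$, I reduce by symmetry to the case $r\le 0$ and then choose the mirror height $\lambda=(r+t_0)/2$. The whole point is to check that this $\lambda$ lies in $[a,0]$ and that the (near-)extreme point at $t_0$ sits on the side of $T_\lambda$ to which the hypothesis applies, so that its reflection $2\lambda-t_0=r$ is forced into $I$. This gives $(t_0,-t_0)\subset I$, i.e. $I$ is an interval. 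The only care needed is bookkeeping of strict versus non-strict inequalities (open or closed endpoints, empty slices), which does not affect connectedness.

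For the converse, convexity $\Rightarrow(\mathcal P_0)$, each symmetric slice has the form $I=[-m,m]$ up to endpoints, and I would verify the containment by a direct estimate: for $\lambda\le 0$ and a point $t$ of the cap being reflected, the reflected coordinate $2\lambda-t$ must be shown to stay between $-m$ and $m$. Both required inequalities reduce to combining $|t|\le m$ with the sign $\lambda\le 0$; it is precisely $\lambda\le 0$, guaranteed because the moving plane travels only on one side of the symmetry hyperplane ($\lambda\in[a,0]$), that makes the smaller cap reflect into the larger one and keeps the image inside $I$.

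The main obstacle, and the place where the hypotheses must be used with precision, is the interplay between the symmetry about $T_0$ and the restriction $\lambda\in[a,0]$: it is this one-sidedness that makes the equivalence correct, since reflecting the \emph{larger} cap would in general escape $K$. I would therefore record at the outset that, thanks to $I_{x'}=-I_{x'}$, checking the reflection condition for $\lambda\le 0$ is equivalent to checking it for all $\lambda$, so that no generality is lost in restricting to $[a,0]$; and I would state the slice reflection condition explicitly so that the correct cap is reflected. The remaining boundary and measure-zero subtleties (whether slices are open or closed, possible isolated points) are routine and can be absorbed by working with closures, as they do not alter the notion of convexity in the direction $x_N$.
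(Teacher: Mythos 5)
Your argument is correct and takes essentially the same route as the paper: both reduce the statement to lines parallel to the $x_N$-axis and rest on the midpoint-reflection choice $\lambda=(r+t_0)/2$ together with the constraints $\lambda\in[a,0]$ and the $T_0$-symmetry of $K$; your slice-by-slice formulation and direct verification of the cap inclusion are just the contrapositive repackaging of the paper's two contradiction arguments. Your insistence on stating explicitly which cap is reflected is well taken, since the paper's wording of $(\mathcal P_0)$ (``top side'' of $T_\lambda$ with $\lambda\le 0$) leaves that orientation ambiguous, and the inequalities only close when the cap lying on the far side of $T_\lambda$ from $T_0$ is the one being reflected.
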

\begin{proof}
To show that $\mathcal P_0$ is a neceessary condition, we are going to do a reasoning by absurd. \\
And then $$\exists \lambda_0 \in [a, 0] \; \; \mbox{such that} \; \; \sigma_{\lambda_0}(K_{\lambda_0}^+) \not\subset K_{\lambda_0}^- . $$
It is translated  by: $ \exists  y \in \sigma_{\lambda_0}(K_{\lambda_0}^+) :  y \notin K_{\lambda_0}^-  $ and therefore $y \notin K.$\\
For $x \in  \sigma_{\lambda_0}(K_{\lambda_0}^+),$ then  we have : $\exists x' \in  K_{\lambda_0}^+:  \; \;  \sigma_{\lambda_0}(x')=x. $\\
Let $x''$ be the symmetric point of $x'$ with respect to $T_0,$ then we claim that $x'\in K  \Rightarrow x'' \in K.$\\
Since the hyperplanes $T_0$ and $T_{\lambda_0}$ are parallel, $x, x' \; \mbox{and}\; x''$ belong to a same right lenght which is parallel to the axis $(Ox_N)$ where $O$ is the origin of the considered  orthonormal reference.\\
However, we get $[x', x''] \not\subset K$ and this implies that $K$ is not convex in the direction of $x_N:$ that is  a contradiction with the hypothesis.\\
Let us show now that que $\mathcal P_0$ implies that $K$ is convex in the direction of $x_N.$ For this let's suppose that K is not convex in the direction of $x_N.$\\
Then  there are $x, y$ lying in the same  straight line such that $x\in K, y\in K$ but $[x,y]\not\subset K.$ This means that $\exists x_0 \in [x, y]: x_0 \notin K.$
If we consider the $Nth$ components of $x$ and $x_{0}$, let's take  $\lambda_0= \frac{x_N+ x_{0N}}{2}.$ Then because of the fact that $d_2 (x, x')= d_2 (x, x_0)$, $d_2$ where being  the Euclidian distance and $x'=(x_1, x_2, \cdots, x_{N-1}, \lambda_0.)$,  we have $\sigma_{\lambda_0} (x)= x_0.$ \\
Depending on the direction of the axis that one  chooses, we have
\begin{itemize}
\item $\lambda_0 < 0$ if  $x$ and $x_0$ are situated in the same side of  $T_0;$
\item $ \lambda_0= 0$ if $x$ and $x_0$ are located on either side of $T_0.$
\end{itemize}
It is  easy to remak that $\lambda_0 \geq a$ and then $\lambda_0 \in [a, 0].$
From all the above justifications, one deduces that\\
$x \notin  K_{\lambda_0}^-$ and $x \in K_{\lambda_0}^+.$\\
Hence we conclude that  $\sigma (x) \notin K_{\lambda_0}^-$  and therefore $\sigma(K_{\lambda_0}^+) \not\subset K_{\lambda_0}^-.$
\end{proof}

Before proving the  theorem, we need the following lemma. And its proof can be found in \cite{Ser}.
\begin{lemma}\label{lemse}
Let $\Omega$ be a $\mathcal C^2$ regular domain of $\mathbb{R}^N, $ $T$ the hyperplane containing the  normal vector  $\vec{n}$ to $\partial \Omega$  at some point $x_0\in \partial \Omega.$ Let $\mathcal D $ un subset of $\Omega$  being in only one side of $T.$ One supposes that there is $w \in \mathcal C^2 (\overline{\mathcal D}),$ verifying:
\begin{eqnarray}
\displaystyle \left\{\begin{array}{c c c c c}
\Delta w  &\leq &0&  \mbox{in} &\mathcal D\\
w &\geq &  0&\mbox{in} &\mathcal D \\
w(x_0) &=&0 
\end{array}\right.
\end{eqnarray}
Then one of the following two conditions is satisfied if $w \not \equiv 0$
\begin{eqnarray*}
\begin{cases}
\mbox{(i)}\, \,  \frac{\partial w}{\partial \nu } (x_0)> 0\\
\mbox{(ii)}\, \,  \frac{\partial^2w}{\partial \nu^2} (x_0)> 0
\end{cases}
\end{eqnarray*}

\end{lemma}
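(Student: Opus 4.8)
The plan is to reconstruct Serrin's corner argument; the point is that the hypothesis ``$T$ contains the normal $\vec n$ to $\partial\Omega$ at $x_0$'' forces the centre of a suitable interior ball to lie on $T$, and this is exactly what makes an explicit barrier available. Throughout I read $\nu$ as a direction at $x_0$ \emph{entering} $\mathcal D$ non-tangentially, the one-sided derivatives being taken from within $\overline{\mathcal D}$; the genuine normal to $T$ is tangent to $\partial\Omega$ at $x_0$, which is precisely the source of the difficulty. First I would dispose of the easy alternatives. Since $\Delta w\le 0$ and $w\ge 0$ in $\mathcal D$ with $w(x_0)=0$, the function $-w$ is a subsolution of $\Delta$ attaining its maximum value $0$; by the strong maximum principle above, either $w\equiv 0$ (excluded here) or $w>0$ in the interior of $\mathcal D$, so $w$ is bounded below by a positive constant on every compact interior set. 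Because $w(x_0)=0$ is a minimum along any entering direction, both $\partial w/\partial\nu(x_0)$ and $\partial^2 w/\partial\nu^2(x_0)$ are automatically $\ge 0$, so the whole content of the lemma is to exclude that they vanish simultaneously.

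Next I set up the geometry. Using that $\partial\Omega$ is $\mathcal C^2$, I fix an interior ball $B=B_R(c)$ tangent to $\partial\Omega$ at $x_0$, with $c=x_0+R\vec n$ and $R$ small. The key remark is that $c\in T$, since $x_0\in T$ and $\vec n$ lies in $T$ by hypothesis. Choosing coordinates with $x_0$ at the origin, $\vec n=e_N$ and $T=\{x_1=0\}$ so that $\mathcal D$ lies in $\{x_1>0\}$, the centre satisfies $c_1=0$. I then work on the half-annulus $G=B_R(c)\cap\{R/2<|x-c|<R\}\cap\{x_1>0\}\subset\mathcal D$. Its boundary consists of the two spherical caps $\{|x-c|=R\}$ and $\{|x-c|=R/2\}$ together with a flat piece in $T$, and $x_0$ sits on the outer cap at its edge with $T$. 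This is exactly the corner at which the ordinary Hopf boundary lemma (the ISC lemma above) degenerates, because no interior ball of $\mathcal D$ touches $x_0$ with outward normal transverse to $T$.

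The heart of the proof is the barrier. I would take $h(x)=\langle x-x_0,e_1\rangle\big(e^{-\alpha|x-c|^2}-e^{-\alpha R^2}\big)=x_1\big(e^{-\alpha|x-c|^2}-e^{-\alpha R^2}\big)$, which vanishes on the flat face $\{x_1=0\}\subset T$ and on the outer cap $\{|x-c|=R\}$, and is positive inside $G$. Because $c_1=0$, a direct computation gives $\Delta h=2\alpha\,x_1\,e^{-\alpha|x-c|^2}\big(2\alpha|x-c|^2-(N+2)\big)$, so choosing $\alpha>2(N+2)/R^2$ makes $\Delta h\ge 0$ on $G$ (where $|x-c|>R/2$). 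On $\partial G$ the barrier is controlled: it is $0$ on the outer cap and on the $T$-face, while on the inner cap $\{|x-c|=R/2\}$, a compact subset of the interior of $\mathcal D$, one has $w\ge m>0$, hence $w\ge \varepsilon h$ there for $\varepsilon$ small. Thus $w-\varepsilon h$ is superharmonic and nonnegative on $\partial G$, and the comparison (minimum) principle yields $w\ge\varepsilon h$ throughout $G$.

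Finally I extract the dichotomy. Approaching $x_0$ along a direction $\nu$ with positive components on $e_1$ and $e_N$, for small $t>0$ the point $x_0+t\nu$ lies just inside the outer cap, and expanding $h$ there the factor $x_1$ contributes one power of $t$ while the radial bracket contributes another, giving $w(x_0+t\nu)\ge c_0 t^2$ with $c_0>0$. Hence $w$ cannot vanish to order higher than two at $x_0$ in the direction $\nu$: if $\partial w/\partial\nu(x_0)>0$ we are in case (i), and otherwise $\partial w/\partial\nu(x_0)=0$ forces $\partial^2 w/\partial\nu^2(x_0)\ge 2c_0>0$, which is case (ii). I expect the barrier step to be the main obstacle, namely producing a single function that vanishes on both the curved boundary (replaced near $x_0$ by the tangent sphere) and the flat plane $T$, remains a subsolution once the corner is cut out, and still exhibits genuine quadratic growth in the entering direction. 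The hypothesis that $T$ contains the normal is essential precisely because it puts $c$ on $T$, which is what keeps the sign of $\Delta h$ under control.
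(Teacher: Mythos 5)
The paper itself does not prove this lemma: it is Serrin's corner lemma (Lemma 2 of \cite{Ser}) and the text simply defers to that reference. Your reconstruction follows exactly the argument of the cited source: the interior tangent ball whose centre lies on $T$ (this is precisely where the hypothesis that $T$ contains the normal $\vec n$ is used), the region $G$ bounded by two spherical caps and a flat face in $T$, the barrier $h(x)=x_1\bigl(e^{-\alpha|x-c|^2}-e^{-\alpha R^2}\bigr)$ whose Laplacian you compute correctly (the cancellation $c_1=0$ is the key point), the comparison $w\ge\varepsilon h$ on $G$, and the quadratic lower bound $w(x_0+t\nu)\ge c_0t^2$ along a direction entering $\mathcal D$ non-tangentially, which yields the dichotomy. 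The structure is right and the final step is correctly argued.

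There is, however, one step that fails as written: the comparison on the inner cap. You claim that $\{|x-c|=R/2\}\cap\{x_1>0\}$ is ``a compact subset of the interior of $\mathcal D$'' on which $w\ge m>0$. It is not: its closure meets $T$ at the points with $x_1=0$, where $w$ may vanish, so $\inf w$ over the inner cap can be $0$ and $w\ge\varepsilon h$ does not follow from a uniform positive lower bound. Since $h$ vanishes to first order in $x_1$ there, what is actually needed is $w\ge\varepsilon' x_1$ on the inner cap. This is where Serrin invokes the ordinary Hopf boundary point lemma a second time: by the strong maximum principle $w>0$ in the interior of the relevant component of $\mathcal D$ (you need $w\not\equiv 0$ on that component, so connectedness should be made explicit), and at points of the flat face $T\cap B_R(c)$, which are smooth boundary points of $\mathcal D$ lying in the interior of $\Omega$, Hopf gives $\partial w/\partial x_1>0$; combining this with compactness and the positivity of $w$ away from $T$ yields $w\ge\varepsilon' x_1$ on the inner cap, after which your comparison argument goes through unchanged. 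A second, harmless slip: $\frac{\partial^2 w}{\partial\nu^2}(x_0)\ge 0$ is not ``automatic'' from $w\ge 0=w(x_0)$ unless the first derivative vanishes, but you never actually use that remark.
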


\begin{proof}\textbf{ of the Theorem \ref{thsym}}\\
There are two exclusive  possibilities
\begin{itemize}
\item   $\sigma_{\lambda}(K_{\lambda}^{+})$ becomes internally tangent to  $\partial \Omega$ at some point $y_0$ with  $y_0 \notin T_{\lambda}$
\item $T_{\lambda}$ reaches a position where it is orthogonal to $\partial \Omega$ at some point  $x_0$, which
necessarily belongs to the closure of the strip between $T_0$ and $ T. $
\end{itemize}
Having at hand this proposition,it suffices to reproduce the  scheme of Serrin's proof (Alexandov's moving planes and maximum principle)  and that's all.\\
Our aim is to show that $\Omega$ is symmetric with respect to $T_0.$\\
Displacing the hyperplan $T_{\lambda}: x_N= \lambda$ in the sense of the axis $x_N$ or in the opposite sense and  parallel to the hyperplane $T_0,$ on may be in the two following cases:
\begin{enumerate}
\item $\sigma (K_{\lambda}^{+})$ is internally tangent to $\partial \Omega$ at a point $y_0, y_0 \notin T_{\lambda};$
\item $T_{\lambda}$ is orthogonal to $\partial \Omega$ at  a point $x_0.$
\end{enumerate}
Otherwise: $\exists \lambda_0 \in \mathbb{R}: \sigma_{\lambda_{0}} (\Omega_{\lambda_0}^+) \subset \Omega$ and
\begin{itemize}
\item  the above first case 1, occurs or
\item the  above second one  (case 2), occurs.
\end{itemize}
where $\Omega_{\lambda_0}^+$ is  the subset of $\Omega$ which is completely above  $T_{\lambda_0}.$
If we want to change the orientation of $x_N,$ we can assume that $\lambda_0\leq 0.$\\
Let us set $\Sigma_{\lambda_0}:=   \sigma_{\lambda_{0}} (\Omega_{\lambda_0}^+), $ let $v$ be the function defined on $\Sigma_{\lambda_0} \backslash K$ defined by:
\begin{eqnarray*}
\forall x  \in \Sigma_{\lambda_0} \backslash K,  v (x)= u (x') \, \, \mbox{where}\, \, x'= \sigma_{\lambda_0}(x)
\end{eqnarray*}
so that we have 
\begin{eqnarray}
\displaystyle \left\{\begin{array}{c c c c c}
\Delta v + k^2 v &=&0&  \mbox{in} &\Sigma_{\lambda_0} \backslash K\\
v(x) &=&  u(x')&\mbox{on} &\partial K\cap \Gamma_{\lambda_0} \\
v(x) &=&u(x) &\mbox{on} &\Gamma_{\lambda_0}\cap T_{\lambda_0}\\
v&=& 0&\mbox{on}& (\Gamma_{\lambda_0}\backslash K)\cap T_{\lambda_0}^c\\
|\nabla u| &=& c& \mbox{on} & (\Gamma_{\lambda_0}\backslash K)\cap T_{\lambda_0}^c
\end{array}\right.
\end{eqnarray}
where $ \Gamma_{\lambda_0}= \partial (\Sigma_{\lambda_0}\backslash K).$\\
Let us consider now the function $u-v$ which is also defined on  $\Sigma_{\lambda_0}\backslash K$ and satisfied
\begin{eqnarray}
\displaystyle \left\{\begin{array}{c c c c c}
\Delta (u-v) + k^2 u-v &=&0&  \mbox{in} &\Sigma_{\lambda_0} \backslash K\\
(u-v)(x) &=&  1 - u(x') &\mbox{on} &\partial K\cap \Gamma_{\lambda_0} \\
(u-v)(x) &=&0 &\mbox{on} &\Gamma_{\lambda_0}\cap T_{\lambda_0}\\
u-v&=& u&\mbox{on}& (\Gamma_{\lambda_0}\backslash K)\cap T_{\lambda_0}^c
\end{array}\right.
\end{eqnarray}
There are two possible situations, $\lambda_0 <0$ or $\lambda_0= 0$.
\begin{enumerate}
\item If  $\lambda_0 < 0, $ then thanks to maximum principle, we have: $u-v >0 \, \mbox{in}\,  \Sigma_{\lambda_0}\backslash K.$
\item If $\lambda_0=0,$ then: 
\begin{itemize}
\item $\Omega$ is symmetric with respect to $T_0,$ which completes the proof;
\item or $\Omega$ is not symmetric. And we deduce from the maximum principle that

  $u-v >0 \, \mbox{in}\,  \Sigma_{\lambda_0}\backslash K.$ But we are going to show that this latter cannot be realized.
  \end{itemize}
  \end{enumerate}
  In fact since
  \begin{enumerate}
\item  either $\sigma (K_{\lambda}^{+})$ is internally tangent to $\partial \Omega$ at a point $y_0, y_0 \notin T_{\lambda};$
\item or $T_{\lambda}$ is orthogonal to $\partial \Omega$ at  a point $x_0, $
\end{enumerate}
 it suffices    to  just consider these two cases in turn and end up with a contradiction.\\
 Let us suppose at first that the above first case. We have  $u-v>0$ in $ \Sigma_{\lambda_0}\backslash K$ and   there is $x_0 \in \partial \Omega,  x_0 \notin T_{\lambda_0}$ such that  $(u-v)(x_0)= 0.$ Then, thanks to Hopf's lemma, we get $\frac{\partial}{\partial \nu} (u-v)(x_0)> 0,$ that is never but  $c>c.$ This is a contradiction.\\
 Let us suppose this time the second case (case 2). Since we still   have  $u-v>0$ in $ \Sigma_{\lambda_0}\backslash K.$ Following  the same reasonning as in \cite{Ser}, we get  
 $\frac{\partial}{\partial \nu} (u-v)(x_0)= 0$ and   $\frac{\partial^2}{\partial \nu^2} (u-v)(x_0)= 0.$ This  contradicts the Lemma \ref{lemse}.\\
 Therefore, there is only the situation of $\lambda_0= 0.$ And therefore $u \equiv v$ and $T_{\lambda_0}= T_0.$
\end{proof}

\section{The first result of the Schiffer's problem}
We begin this section by recalling  the following  classical but fundamental result for our study:
\begin{theorem} 
Let $\O$ be  a boundomain of class $\mathcal C^{2, \alpha}.$
Let us consider the following boundary value problem
\begin{eqnarray}\label{eq201}
\left\{
\begin{array}{rclcl}
\D u  + k^2 u & = & 0 &\mbox{in}& \O \\
u & = & 0 &\mbox{on} & \dpa \O.
\end{array}
\right.
\end{eqnarray}
Then we have only of the following result:
\begin{itemize}
\item  $(\ref{eq201})$ has $u \equiv 0$ as the unique solution;
\item $(\ref{eq201})$ has non trivial solutions  which which form a finite-dimensional vector subspace of $\mathcal C^{2, \alpha} (\overline{\Omega}).$
\end{itemize}
\end{theorem}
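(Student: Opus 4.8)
The plan is to read this statement as the spectral dichotomy for the Dirichlet Laplacian, since $\Delta u + k^2 u = 0$ with homogeneous Dirichlet data is exactly the eigenvalue equation $-\Delta u = k^2 u$, whose solution set is either trivial or an eigenspace of $-\Delta$ associated with the value $k^2$. First I would pass to the weak formulation in $H_0^1(\Omega)$ and introduce the solution operator of the Dirichlet problem: for $f \in L^2(\Omega)$, the problem $-\Delta w = f$ in $\Omega$, $w = 0$ on $\partial\Omega$, has a unique weak solution $w = Kf \in H_0^1(\Omega)$ by the Lax--Milgram theorem. The operator $K : L^2(\Omega) \to L^2(\Omega)$ is self-adjoint, positive, and---crucially---compact, the compactness coming from the compact embedding $H_0^1(\Omega) \hookrightarrow L^2(\Omega)$ (Rellich--Kondrachov), valid because $\Omega$ is bounded.

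The second step is the Fredholm alternative. A function $u \in H_0^1(\Omega)$ solves the problem weakly if and only if $u = k^2 K u$, that is $(I - k^2 K)u = 0$. Since $K$ is compact, the Riesz--Schauder theory applies to $I - k^2 K$: its kernel is finite-dimensional, and it is either $\{0\}$ or a nontrivial finite-dimensional subspace of $L^2(\Omega)$. This is precisely the dichotomy asserted---either $u \equiv 0$ is the only solution, when $k^2$ is not an eigenvalue of $-\Delta$, or the solutions fill out a finite-dimensional eigenspace.

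It remains to upgrade the regularity so that the solution space sits inside $\mathcal{C}^{2,\alpha}(\overline{\Omega})$. Here I would bootstrap: if $u \in L^2(\Omega)$ solves $-\Delta u = k^2 u$, then the right-hand side lies in $L^2(\Omega)$, so interior and boundary $H^2$-estimates give $u \in H^2(\Omega)$; iterating, $u \in H^s(\Omega)$ for every $s$, hence $u \in \mathcal{C}^\infty(\Omega)$. Because $\partial\Omega$ is of class $\mathcal{C}^{2,\alpha}$, the global Schauder estimates for the Poisson equation with this right-hand side yield $u \in \mathcal{C}^{2,\alpha}(\overline{\Omega})$. Combined with the second step, the nontrivial solution space is a finite-dimensional vector subspace of $\mathcal{C}^{2,\alpha}(\overline{\Omega})$.

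I expect the functional-analytic core, namely the compactness of $K$ and the Fredholm alternative, to be entirely routine; the step needing the most care is the regularity bootstrap up to the boundary, since it is exactly there that the $\mathcal{C}^{2,\alpha}$ hypothesis on $\partial\Omega$ is consumed through the global Schauder estimates. One should also verify that the weak ($H_0^1$) solution space and the classical ($\mathcal{C}^{2,\alpha}$) solution space coincide, which the regularity argument guarantees, so that the finite-dimensionality obtained in $L^2(\Omega)$ transfers to the space of classical solutions.
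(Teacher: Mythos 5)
Your argument is correct, but it follows a different route from the one the paper points to: the paper offers no proof at all, merely citing Theorem 6.15 of Gilbarg--Trudinger, which establishes the Fredholm alternative \emph{directly in the H\"older space} $\mathcal C^{2,\alpha}(\overline{\Omega})$ via Schauder estimates and the method of continuity, with no detour through $L^2$. You instead take the Hilbert-space route: Lax--Milgram for the solution operator $K$, compactness of $K$ from Rellich--Kondrachov, the Riesz--Schauder alternative for $I-k^2K$ in $L^2(\Omega)$, and then a regularity bootstrap to land in $\mathcal C^{2,\alpha}(\overline{\Omega})$. Both are standard; yours buys self-adjointness and the spectral picture of $-\Delta$ for free (which is in fact what the rest of the paper uses), at the cost of having to do the regularity upgrade that the H\"older-space approach avoids entirely. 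One small imprecision in your bootstrap: with $\partial\Omega$ only of class $\mathcal C^{2,\alpha}$ you cannot claim global $H^s(\Omega)$ regularity for every $s$ (that requires a correspondingly smoother boundary); the clean chain is $u\in H^2$, then iterate the $L^p$ theory to get $u\in W^{2,p}$ for all finite $p$, hence $u\in \mathcal C^{1,\beta}(\overline{\Omega})$, so the right-hand side $k^2u$ lies in $\mathcal C^{0,\alpha}(\overline{\Omega})$ and the global Schauder estimate finishes. This does not affect the validity of your conclusion, and your final remark that the weak and classical solution spaces coincide is exactly the point that transfers finite-dimensionality from $L^2$ to $\mathcal C^{2,\alpha}(\overline{\Omega})$.
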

See for instance  \cite{GT}, Theorem 6.15 for more details.

\begin{theorem}\label{DS} 

Let $k\neq 0$  be a given real number.  If $k^2$ is an  eigenvalue of the  Laplacian- Dirichlet operator, then there exist $R>0, \beta (R)>0$ and $\Omega \subset \mathbb{R}^2$ a bounded convex domain  such that 
\begin{eqnarray*}
\begin{cases} 
\Delta u + k^2 u= 0 \; \; in\; \;   \Omega\\
-  \frac{\partial u}{\partial \nu} \big\arrowvert_{\partial \Omega}=  c_0\\
u\big \arrowvert_{\partial \Omega}= 0 \\

\end{cases}
\end{eqnarray*}
has a solution  for
  constants  $c_0> \beta (R).$\\
And as consequence, in this case  the Schiffer's conjecture is true ($\Omega$ is a disc). 
\end{theorem}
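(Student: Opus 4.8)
The plan is to realise the overdetermined problem explicitly on a disc, using the Bessel-function description of the Dirichlet spectrum recalled in Section 2. Since $k\neq 0$ is real we have $k^2>0$, and the first Dirichlet eigenvalue of the ball $B_R\subset\mathbb{R}^2$ is $\alpha_1(R)=(j_{0,1}/R)^2$, where $j_{0,1}$ is the first positive zero of the Bessel function $J_0$. The map $R\mapsto\alpha_1(R)$ is a continuous strictly decreasing bijection of $(0,\infty)$ onto $(0,\infty)$, so there is a unique $R=j_{0,1}/|k|$ for which $k^2=\alpha_1(R)$; this is precisely the hypothesis that $k^2$ be an eigenvalue of the Laplacian--Dirichlet operator. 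I take $\Omega=B_R$ for this value of $R$, which is bounded and convex.

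On this ball the first Dirichlet eigenfunction is radial and, up to a multiplicative constant, equals $u(x)=t\,J_0(|k|\,|x|)$ with $t>0$. It satisfies $\Delta u+k^2u=0$ in $\Omega$ and $u=0$ on $\partial\Omega$, because $|k|R=j_{0,1}$. Being radial and positive inside $B_R$, it has $\nabla u=\psi'(|x|)\,x/|x|$ with $\psi(r)=t\,J_0(|k|r)$, so on the sphere $\partial B_R$ the normal derivative $\partial u/\partial\nu=\psi'(R)=t\,|k|\,J_0'(j_{0,1})$ is a constant. Since $J_0'(j_{0,1})=-J_1(j_{0,1})<0$, the quantity $c_0:=-\partial u/\partial\nu=t\,|k|\,J_1(j_{0,1})$ is a positive constant on $\partial\Omega$. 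The equation being linear and homogeneous in $u$, the free scaling parameter $t>0$ lets me prescribe $c_0$ to be any positive number; in particular, for any threshold $\beta(R)>0$ one chooses $t$ so that $c_0>\beta(R)$. This exhibits the required solution, and the constructed domain is a disc, which is the conclusion asserted by Schiffer's conjecture in this configuration.

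The steps, in order, are: first, invert the Bessel eigenvalue relation to fix $R$; second, write down the radial eigenfunction and verify the interior equation and the Dirichlet condition; third, use radial symmetry to obtain the constant Neumann trace and identify $c_0$; fourth, use homogeneity to meet $c_0>\beta(R)$; and finally, for the ``$\Omega$ must be a disc'' direction, feed the overdetermined data into the Serrin-type symmetry machinery of Theorem \ref{thsym} and \cite{Ser}, reading the constant boundary gradient as the extra condition that forces planar symmetry in every direction.

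The main obstacle is the degeneracy of the maximum principle exactly in this regime. Taking $k^2$ equal to the first eigenvalue forces $\lambda_1=\alpha_1-k^2=0$, so by the criterion of Section 2 the operator $-\Delta-k^2I$ sits precisely at the borderline where the maximum principle fails, and the moving-planes argument underlying Theorem \ref{thsym} cannot be invoked verbatim to prove that a solution domain is necessarily a ball. I expect the resolution to be twofold: the existence half is unconditional, since the radial eigenfunction is produced by hand and needs no maximum principle; and for the symmetry half one works slightly inside the maximum-principle region, where $R<j_{0,1}/|k|$ and hence $\lambda_1>0$, and then passes to the limit $R\uparrow j_{0,1}/|k|$, or else one uses the shape-optimization characterisation of critical domains together with the convexity assumption to control the limiting case. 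Making this limiting passage rigorous while keeping $\Omega$ convex is the delicate point.
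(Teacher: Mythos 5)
Your construction is correct for the literal wording of the statement, but it takes a genuinely different route from the paper's and, in doing so, empties the theorem of the content it is meant to carry. You pick $R=j_{0,1}/|k|$ so that $k^2$ is exactly the first Dirichlet eigenvalue of $B_R$, set $\Omega=B_R$, and exhibit the radial eigenfunction $t\,J_0(|k|\,|x|)$, whose normal derivative on the circle is automatically the constant $-t|k|J_1(j_{0,1})$; scaling $t$ then meets any threshold $c_0>\beta(R)$. This settles existence, but the concluding clause ``$\Omega$ is a disc'' becomes true by construction rather than as a consequence: you have re-derived the classical fact that the overdetermined problem is solvable \emph{on} a disc, which is the trivial direction of Schiffer's conjecture. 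The paper instead produces $\Omega$ as an extremal domain of the shape functional $J(\omega)=\int_\omega|\nabla u|^2\,dx$ over convex domains of prescribed volume, constrained by the Dirichlet eigenvalue problem: the first-order optimality condition forces $|\nabla u|=(-2\tau_\Omega)^{1/2}$ to be constant on the boundary; a monotonicity property of the Lagrange multiplier $\Lambda_\Omega$ under domain inclusion (proved by a rescaling-and-comparison argument that uses the maximum principle) gives the lower bound $\beta(R)=\|\nabla u_R\|$ on $\partial B_R$ and, ``by approximation,'' a convex $\Omega^*$ realizing a prescribed $c_0>\beta(R)$; only then is the moving-planes machinery of Theorem \ref{thsym} invoked to prove that this a priori arbitrary convex $\Omega^*$ is a disc. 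What the paper's route buys is precisely that nontrivial symmetry conclusion; what your route buys is brevity and rigor for the bare existence claim, at the price of proving something strictly weaker than intended.

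On the maximum-principle obstruction you raise: it is real for \emph{your} configuration (at $R=j_{0,1}/|k|$ one has $\lambda_1(-\Delta-k^2,B_R)=0$, the borderline case), but it is not how the paper proceeds. The paper fixes $R$ strictly below $j_{n,1}/|k|$, so that $-\Delta-k^2$ satisfies the maximum principle on $B_R$ and its subdomains, and runs the monotonicity and Serrin arguments entirely inside that regime; no limiting passage $R\uparrow j_{0,1}/|k|$ is attempted. Your proposed limit argument is therefore neither needed for your own explicit construction (which uses no maximum principle) nor a faithful account of the paper's strategy; if you want to recover the paper's stronger assertion, the missing ingredient is an argument producing a solution domain that is not assumed to be a ball, to which Theorem \ref{thsym} can then be applied.
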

\subsection{Existence of minimum of shape functional}
In this subsection we are going to study the following  shape optimization problem

\begin{eqnarray*}
\min_{\omega \in \mathcal O} J(\omega); J(\omega):= \int_{\omega} |\nabla u|^2 dx
\end{eqnarray*}
constrained by the boundary value problem:
\begin{eqnarray*}
\begin{cases} 
\Delta u + k^2 u= 0 \; \; in\; \;   \omega\\
u\big \arrowvert_{\partial \omega}= 0 \\
\displaystyle \int_{\omega} u^2 dx = 1

\end{cases}
\end{eqnarray*}
where $k$ is such that, $|k|< \frac{j_{n, 1}}{R}, R> 0,$ and $c_0$ is a given  positive constant;

 $\mathcal O$ stands for a topological set. Namely, in our work, let $V_0>0$ be a chosen positive real value and $B$ a ball of $\mathbb{R}^N$, then 
$$\mathcal O:= \{\Omega \subset B, \; \; \mbox{open convex set of }\; \; \mathbb{R}^N (N\geq 2), \; \; \mbox{of class }\, \; \mathcal C^m, m \geq 2: |\Omega|= V_0 \}.$$
Let us  introduce the following notation $k^2:= \lambda_{\omega}.$
It is well known that the above shape optimization admits a solution. The reader interested can see  for instance the following references  \cite{BucBut}, \cite{HP}, \cite{DeZo}.\\
We quote also that $\lambda_{\omega_n}$ converges to $ \lambda_{\omega}$ and 
let us call  by $\Omega$ the  minimum domain  of $J$ under the above boundary value problem. \\
We shall use the shape calculus to compute the shape derivative and obtain the  optimality condition.

\subsection{Proof of the Theorem \ref{DS}}
The proof of the theorem is organized in several steps. After the shape optimization part,  we are going to compute  optimality condition and establish a monotonicity result and give the last part of the proof.
\subsubsection{Optimality condition:}
$ J(\Omega):= \displaystyle  \int_{\Omega} |\nabla u|^2 dx, $
\begin{eqnarray*}
\begin{cases} 
\Delta u + k^2 u= 0 \; \; in\; \;   \Omega\\
u\big \arrowvert_{\partial \Omega}= 0 \\
\displaystyle \int_{\Omega} u^2 dx = 1

\end{cases}
\end{eqnarray*}
The first formulae of the shape derivative is given by:
\begin{eqnarray*}
dJ (\Omega, V)= \int_{\Omega} \nabla u \nabla u' dx + \int_{\partial \Omega} \frac{1}{2}|\nabla u|^2 V(0). \nu d\sigma,
\end{eqnarray*}
where  $\nu$ is the exterior normal of $\Omega,$ $ V (t, x); V(0):= V(0, x):= V$ is  a vector field, $t\in [0, \epsilon), \epsilon <<1,$ $u'$  stands for the shape derivative of $u$  and is solution of the following boundary value problem
\begin{eqnarray*}
\begin{cases} 
\Delta u' + k^2 u' + (k^2)' u= 0 \; \; in\; \;   \Omega\\
u'\big \arrowvert_{\partial \Omega}= - \frac{\partial u}{\partial \nu} V(0). \nu \\
\displaystyle \int_{\Omega} u' u dx = 0.

\end{cases}
\end{eqnarray*}
Writing  the variational expression of the Dirichlet eigenvalue problem  with $u'$, we have: 
\begin{eqnarray*}
\int_{\Omega} \nabla u. \nabla u' dx= -\int_{\partial \Omega} (\frac{\partial u}{\partial \nu})^2 V(0). \nu d\sigma.
\end{eqnarray*}
Finally, we get 
\begin{eqnarray*}
dJ(\Omega, V)= -\frac{1}{2}\int_{\partial \Omega} (\frac{\partial u}{\partial \nu})^2 V(0). \nu d\sigma.
\end{eqnarray*}
 At the optimal state, there is a Lagrange multiplier $\tau_{\Omega}$ such that $ -\frac{1}{2}(\frac{\partial u}{\partial \nu})^2= \tau_{\Omega}$ on $\partial \Omega.$

We have the monotonicity result related to the Lagrange multiplier from the optimality condition of the shape optimization problem.
\begin{proposition} 
The map
$\Omega \mapsto \Lambda_{\Omega}:=  (-2\tau_{\Omega })^{1/2}$  is decreasing in the sense that:\\
For any $   \Omega _1, \Omega_2 \, \, \mbox{two starshaped with respect to an origin point}\, \,  0,$  solutions of  the considered shape optimization problem,, $  \overline{\Omega_ 1} \subset \overline{\Omega_2} ,$ then\, $ \Lambda_{\Omega_2} \leq \Lambda_{\Omega_1}.$
\end{proposition}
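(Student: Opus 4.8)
The plan is to convert the pointwise optimality condition into a single scalar identity that couples the Lagrange multiplier to the eigenvalue $\lambda_{\Omega}=k^2$ and the volume $|\Omega|$, and then to read off the monotonicity from the domain monotonicity of the first Dirichlet eigenvalue. First I would unpack the optimality relation derived just above. Since $u=0$ on $\partial\Omega$, the gradient there is purely normal, so $|\nabla u|^2=(\partial u/\partial\nu)^2$ on $\partial\Omega$; the relation $-\tfrac12(\partial u/\partial\nu)^2=\tau_{\Omega}$ then says precisely that $|\nabla u|^2\equiv -2\tau_{\Omega}=\Lambda_{\Omega}^2$ is a positive constant on $\partial\Omega$. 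Thus $\Lambda_{\Omega}$ is exactly the constant modulus of the normal derivative of the normalized eigenfunction on the free boundary, i.e.\ the overdetermined Neumann datum of the Schiffer problem.

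The heart of the argument is a Rellich--Pohozaev identity for the eigenfunction. I would test $-\Delta u=\lambda_{\Omega}u$ against the field $x\cdot\nabla u$ and integrate by parts, using $u|_{\partial\Omega}=0$, the normalization $\int_{\Omega}u^2\,dx=1$, and the variational identity $\int_{\Omega}|\nabla u|^2\,dx=\lambda_{\Omega}$. After the interior terms collapse and the boundary contributions are simplified via $\nabla u=(\partial u/\partial\nu)\nu$ on $\partial\Omega$, one is left with
\[
\frac12\int_{\partial\Omega}(x\cdot\nu)\,|\nabla u|^2\,d\sigma=\lambda_{\Omega}.
\]
Inserting the optimality condition $|\nabla u|^2\equiv\Lambda_{\Omega}^2$ and the divergence-theorem identity $\int_{\partial\Omega}(x\cdot\nu)\,d\sigma=N|\Omega|=NV_0$ gives the clean formula
\[
\Lambda_{\Omega}^2=\frac{2\lambda_{\Omega}}{N\,|\Omega|}.
\]
Here the hypothesis that the competitors are starshaped with respect to the common origin $0$ is what makes the Pohozaev vector field $x$ the natural choice and guarantees $x\cdot\nu\ge 0$ on $\partial\Omega$, so the boundary integral is handled unambiguously and consistently with the divergence theorem.

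With this formula the monotonicity is immediate. If $\overline{\Omega_1}\subset\overline{\Omega_2}$, then on the one hand $|\Omega_1|\le|\Omega_2|$, and on the other hand the domain monotonicity of the first Dirichlet eigenvalue (a larger membrane has the lower fundamental tone) yields $\lambda_{\Omega_2}\le\lambda_{\Omega_1}$. Combining the two,
\[
\Lambda_{\Omega_2}^2=\frac{2\lambda_{\Omega_2}}{N\,|\Omega_2|}\le\frac{2\lambda_{\Omega_1}}{N\,|\Omega_1|}=\Lambda_{\Omega_1}^2,
\]
and taking square roots gives $\Lambda_{\Omega_2}\le\Lambda_{\Omega_1}$, which is the asserted decrease.

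The main obstacle I anticipate is the careful bookkeeping in the Pohozaev computation: tracking every boundary term, invoking $u|_{\partial\Omega}=0$ to kill the tangential pieces, and correctly reducing $\int_{\partial\Omega}(x\cdot\nabla u)(\partial u/\partial\nu)\,d\sigma$ to $\int_{\partial\Omega}(x\cdot\nu)|\nabla u|^2\,d\sigma$. A secondary but genuine point is to confirm that $\lambda_{\Omega}=k^2$ is indeed the \emph{first} Dirichlet eigenvalue on each optimal domain (so that monotonicity and the sign of the normal derivative are legitimate); this is where the convexity, $\mathcal C^{2}$ regularity, and starshapedness of the admissible shapes enter to secure both the Pohozaev identity and the eigenvalue comparison.
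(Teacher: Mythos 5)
Your proof is correct in substance, but it follows a genuinely different route from the paper. The paper argues by comparison: given $\overline{\Omega_1}\subset\overline{\Omega_2}$ and starshapedness, it shrinks $\Omega_2$ to $\Omega^*=t^*\Omega_2\subset\Omega_1$ touching $\partial\Omega_1$, transplants $u_*(x)=u_2(x/t^*)$, applies the maximum principle to $u_1-u_*$ on $\Omega^*$, and compares normal derivatives at a contact point to get $\Lambda_{\Omega_1}\geq \frac{1}{t^*}\Lambda_{\Omega_2}>\Lambda_{\Omega_2}$. You instead derive the closed-form identity $\Lambda_{\Omega}^2=2\lambda_{\Omega}/(N|\Omega|)$ from a Rellich--Pohozaev identity combined with the optimality condition, and read off the monotonicity from the behaviour of $\lambda_\Omega$ and $|\Omega|$ under inclusion. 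Your Pohozaev computation is correct, and your route buys an explicit, quantitative formula while avoiding the maximum principle altogether (hence no need to verify $\lambda_1(-\Delta-k^2,\Omega^*)>0$ on the intermediate domain, and no Hopf-type boundary point argument); the paper's route buys strictness of the inequality through the dilation factor $1/t^*>1$ and stays within the Serrin-style toolkit used in Section 3. Two caveats on your write-up: first, in the paper's setting $\lambda_\Omega=k^2$ is the \emph{same} constant for both domains, so your appeal to domain monotonicity of the first Dirichlet eigenvalue is not needed (and would require knowing that $k^2$ is the first eigenvalue of both nested domains, which strict monotonicity actually forbids); the volume factor alone already gives $\Lambda_{\Omega_2}\leq\Lambda_{\Omega_1}$, which simplifies your argument. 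Second, starshapedness is not needed for the identity itself (only $\int_{\partial\Omega}(x\cdot\nu)\,d\sigma=N|\Omega|>0$ matters once $|\nabla u|$ is constant on the boundary), whereas the paper's proof uses it essentially to produce the dilation $t^*\Omega_2\subset\Omega_1$.
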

\begin{proof}
 Let $ \overline{\Omega_ 1} \subset \overline{\Omega_2},$ then  $\exists t^* \in (0, 1), t^* \Omega_2 \subset \Omega_1$ and $\partial (t^* \Omega_2)\cap \partial \Omega_1 \neq \emptyset.$ \\
 Let us set $ \Omega^* = t^* \Omega_2, u_* (x):= u_2 (\frac{x}{t^*}),$ then $\frac{x}{t^*}\in \Omega_2.$ We heve
 \begin{eqnarray*}
\begin{cases} 
\Delta u_* + k^2 u_* = 0 \; \; in\; \;   \Omega^*\\
u_*\big \arrowvert_{\partial \Omega^*}= 0

\end{cases}
\end{eqnarray*}
Since $t^* \Omega_2 \subset \Omega_1,$ let us set $w= u_1$ defined in $\Omega^*.$
Let us set $v= u_1- u_*$
 \begin{eqnarray*}
\begin{cases} 
\Delta v + k^2  v = 0 \; \; in\; \;   \Omega^*\\
v\big \arrowvert_{\partial \Omega^*}= u_1

\end{cases}
\end{eqnarray*}
SInce $v= u_1\geq 0$ on $\partial \Omega^*,$ by maximum principle we have $v= u_1- u_* \geq 0$ in $\Omega^*.$ 
Let $x^* \in \partial \Omega^* \cap \partial \Omega_1,$ for $h>0$ small enough, we have $u_1 (x^*-h \nu)- u(x^*)\geq  u_* (x^*-h \nu)-u_* (x^*).$
This implies that  $-\frac{\partial u_1}{\partial \nu} (x^*)\geq  -\frac{\partial u_*}{\partial \nu} (x^*).$ This yields $(-2\tau_{\Omega_1})^{1/2}\geq \frac{1}{t^*}(-2\tau_{\Omega_2})^{1/2}> (-2\tau_{\Omega_2})^{1/2}$ and  finally we have $\Lambda_{\Omega_1}> \Lambda_{\Omega_2}.$
\end{proof}


\subsubsection{ Last part of the proof of Theorem \ref{DS}}
\begin{proof}
Let us take  a convex  domain $\Omega_0 \subset B(0, R):= B_R,$ where $R$ is chosen as follows: 
\begin{eqnarray*}
k \neq 0, 0< R < \frac{j_{n, 1}}{|k|}.
\end{eqnarray*}
 Let us first consider $u_R$ be the solution of the Dirichlet eigenvalue problem in $B_R.$ Since $\Omega_0$ is a convex domain,  there is $t_1 \in (0, 1), t_1 B_R \subset \Omega_0$ and $\partial \Omega_0 \cap t_1 \partial B_R \neq \emptyset.$\\
Let us set $u_{t_1} (x)= u_R (\frac{x}{t_1}),  \frac{x}{t_1}\in B_R.$\\
We have 
\begin{eqnarray*}
(-2\tau_{\Omega})^{1/2}> \|\nabla u_R (\frac{x_1}{t_1})\|, \, \, \mbox{for}\, \, x_1\in  \partial \Omega \cap t_1\partial B_R.
\end{eqnarray*}
As initialization we  choose $\Omega_0,$  and then we can build a sequence $(\Omega_n)_{n \in \mathbb{N}}: \cdots \Omega_2 \subset \Omega_1\subset \Omega_0 \subset B_R$ which generates a decreasing sequences $(\Lambda_{\Omega_{n}}), n\in \mathbb{N}$ in the sense:
\begin{eqnarray*}
\forall n \in  \mathbb{N}, \bar{\Omega}_{n+1} \subset \bar{\Omega}_{n}  \Rightarrow  
\Lambda_{\Omega_{n+1}} >  \Lambda_{\Omega_{n}}.
\end{eqnarray*}
\begin{eqnarray*}
\forall n\in \mathbb{N}:  \bar{\Omega}_{n+1} \subset \bar{\Omega}_{n}  \Rightarrow  
\Lambda_{\Omega_{n+1}} >  \Lambda_{\Omega_{n}}> \|\nabla u_R\|\big \arrowvert_{\partial B_R}.
\end{eqnarray*}
It suffices to take $ \beta(R):=   \|\nabla u_R\|\big \arrowvert_{\partial B_R}$. And then, by approximation we have:  for $c_0>   \|\nabla u_R\|\big \arrowvert_{\partial B_R}$ $\exists \Omega^*\in \mathcal O:$
\begin{eqnarray*}
\begin{cases} 
\Delta u_* + k^2 u_* = 0 \; \; in\; \;   \Omega^*\\
u_*\big \arrowvert_{\partial \Omega^*}= 0\\
-\frac{\partial u_*}{\partial \nu }\big \arrowvert_{\partial \Omega^*}= c_0
\end{cases}
\end{eqnarray*}
By the maximum principle and symmetry Serrin's method,  we conclude that $\Omega^*$ is a disc.
\end{proof}
	\begin{remark} 
Here after, we give  the  directional shape derivative of $k^2= \lambda (\Omega)$  when it is  a multiple eigenvalue.  In fact there is no differentiability but only directional derivative.  
	 	\begin{theorem}\textnormal{(Derivative of a multiple Dirichlet eigenvalue)}\label{thm}\\
		Let $\Omega$ be a bounded open domain of $\mathbb{R}^N$ of class $\mathcal C^2$. Suppose $\lambda_k(\Omega)$ is an eigenvalue of multiplicity $p>2$. Let $u_{k_1},u_{k_2},\ldots,u_{k_p}$ be an orthonormal family (for the scalar product $L^2$) of eigenvalues associated with $\lambda_k$. Then $t\mapsto\lambda_k(\Omega_t)$ has a directional derivative at $t=0$ which is one of the eigenvalues of the matrix $p\times p$ defined by
		\begin{align}
			\mathcal{M}=(m_{i,j})\;\;\text{with}\;\;m_{i,j} =-\int_{\partial\Omega}\left(\frac{\partial u_{k_i}}{\partial \nu }\frac{\partial u_{k_j}}{\partial \nu}\right) V\cdot \nu \;d\sigma\;\;i,j=1,\ldots,p
		\end{align}
		where $\frac{\partial u_{k_i}}{\partial \nu}$ is the normal derivative of the $k_i$-th eigenfunction $u_{k_i}$ and $V\cdot \nu$ is the normal displacement of the boundary induced by the deformation field $V$.
	\end{theorem}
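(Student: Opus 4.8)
The plan is to reduce the shape-perturbation problem to finite-dimensional perturbation theory for a self-adjoint family of operators, and then to identify the entries of the reduced matrix with the boundary integral $m_{i,j}$. First I would fix the deformation field $V$ and set $\Phi_t = I + tV$, so that $\Omega_t = \Phi_t(\Omega)$, and transport the eigenvalue problem $-\Delta u = \lambda u$ in $\Omega_t$, $u = 0$ on $\partial\Omega_t$, back to the fixed domain $\Omega$ through the change of variables $v = u\circ\Phi_t$. This turns the problem into a generalized eigenvalue problem $A(t)v = \lambda(t)\,B(t)v$ on $H_0^1(\Omega)$, where $A(t)$ and $B(t)$ carry the Jacobian factors of $\Phi_t$; both depend analytically on $t$ near $0$, with $A(0) = -\Delta$ and $B(0) = \mathrm{Id}$. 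Since $\Phi_t$ is a diffeomorphism for small $t$, this family stays self-adjoint in a fixed form domain $H_0^1(\Omega)$.

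Next I would invoke the perturbation theory of self-adjoint analytic families (Rellich--Kato). As $\lambda_k(\Omega) = \lambda_k$ is an eigenvalue of multiplicity $p$ at $t=0$, with eigenspace $E = \mathrm{span}(u_{k_1},\ldots,u_{k_p})$, there exist $p$ eigenvalue branches $t\mapsto\Lambda_1(t),\ldots,\Lambda_p(t)$, each analytic near $0$, with $\Lambda_j(0) = \lambda_k$. The classical first-order formula identifies the derivatives $\Lambda_j'(0)$ with the eigenvalues of the $p\times p$ matrix $\mathcal{M} = (m_{i,j})$ whose entries are $m_{i,j} = \langle \dot A(0)\,u_{k_i}, u_{k_j}\rangle - \lambda_k\langle \dot B(0)\,u_{k_i}, u_{k_j}\rangle$, that is, the derivative of the bilinear form of the transported problem restricted to $E$. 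The map $t\mapsto\lambda_k(\Omega_t)$ selects the $k$-th eigenvalue in nondecreasing order, so near $0$ it coincides on each side with one of these branches; hence it admits a one-sided (directional) derivative equal to one of the $\Lambda_j'(0)$, and therefore to one of the eigenvalues of $\mathcal{M}$.

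It remains to compute the matrix entries. Here I would apply the Hadamard shape-derivative computation already carried out in the optimality-condition step of Section 4: differentiating the Dirichlet form $a_t(v,w) = \int_{\Omega_t}\nabla v\cdot\nabla w\,dx$ and the mass form $\int_{\Omega_t} v\,w\,dx$ along the flow, and using that each $u_{k_i}$ vanishes on $\partial\Omega$, all interior contributions cancel against the eigenvalue relation and only the boundary term survives. Because the tangential derivatives of $u_{k_i}$ vanish on $\partial\Omega$, the full gradient reduces to the single normal derivative, yielding exactly $m_{i,j} = -\int_{\partial\Omega}\frac{\partial u_{k_i}}{\partial\nu}\frac{\partial u_{k_j}}{\partial\nu}\,(V\cdot\nu)\,d\sigma$.

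The main obstacle is the reduction in the second paragraph: organizing the finite-dimensional perturbation for a multiple eigenvalue and showing that the ordered eigenvalue $\lambda_k(\Omega_t)$ is directionally differentiable with derivative among the eigenvalues of $\mathcal{M}$. One must ensure that the $u_{k_i}$ form an $L^2$-orthonormal basis of $E$, so that $\mathcal{M}$ is symmetric with real eigenvalues, and that it is the analytic branches furnished by Rellich--Kato, rather than the merely continuous ordered eigenvalues, that supply the first-order expansion. The boundary-integral computation, by contrast, is the routine part, being a direct repetition of the shape calculus performed earlier for $J$.
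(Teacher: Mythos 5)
The paper itself offers no proof of this statement: it simply refers the reader to Berger's thesis \cite{berger2015optimisation}, Munnier's thesis \cite{munnier2000stabilite} and Rousselet \cite{rousselet1983shape}. Your sketch is essentially the standard argument contained in those references --- pull the problem back to the fixed domain via $\Phi_t=I+tV$, obtain a self-adjoint analytic family, invoke Rellich--Kato to get $p$ analytic eigenvalue branches whose derivatives at $t=0$ are the eigenvalues of the reduced matrix $P\bigl(\dot A(0)-\lambda_k\dot B(0)\bigr)P$ on the eigenspace, and then evaluate the entries by the Hadamard computation, which collapses to the normal derivatives because the $u_{k_i}$ vanish on $\partial\Omega$ --- so it is consistent with the proof the paper points to, and you correctly flag the one genuinely delicate step, namely that it is the analytic branches (not the merely continuous ordered eigenvalues) that furnish the first-order expansion, the ordered eigenvalue coinciding with a single branch on each side of $t=0$. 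The only detail left implicit is the reduction of the generalized problem $A(t)v=\lambda B(t)v$ to an ordinary self-adjoint analytic family (e.g.\ by conjugating with $B(t)^{-1/2}$, or by working with the quadratic forms on the fixed form domain $H_0^1(\Omega)$) before applying Rellich's theorem; this is routine and does not affect the validity of the argument.
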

	The proof of this theorem can be found in  \cite{berger2015optimisation}.  Alexandre Munnier gave a matrix demonstration of the theorem in his doctoral thesis \cite{munnier2000stabilite}. The first work to our knowledge is by Bernard Rousselet \cite{rousselet1983shape} in his  study of the static response and eigenvalues of a membrane as a function of its shape.
		\end{remark}
	
	
\section{Pompeiu's Problem}
In this section we  would like to discuss the Pompeiu's conjecture based on the monotonicity property established in the optimality condition. But the the clogging relies on first the application of the maximum principle. And secondly, even if  we are in the valid situation of the maximum principle, i.e. when $0 < |k|< \frac{j_{n, 1}}{R},$ the increase of the radius  $R$ of the ball $B_R$ implies the decrease of $k$  which is chosen and fixed in advance. And we may not  get the condition  $\frac{\partial u}{\partial \nu}= 0$ on $\partial \Omega.$\\
In what follows, we are going to propose another way to  address this questions by introducing aspects of infinite-dimensional Riemannian geometry which are combined with shape derivative and shape Hessian. \\
Before that let us begin by giving some classical shape derivative calculus of the Neumann eigenvalue problem recalled below:
\begin{equation*}
			\begin{cases}
				\Delta u +\lambda (\Omega) u=0\;\;\text{in}\;\;\Omega\;\;\text{with}\;\;\lambda(\Omega)=k^2\\[0.3cm] \frac{\partial u}{\partial \nu}=0\;\;\text{on} \;\;\partial\Omega.
			\end{cases}
		\end{equation*} 
\subsection{Shape derivative of Neumann eigenvalues problem}
We start this subsection by recalling the definition of tangential gradient and divergence see for instance (\cite{ms}, \cite{s}, \cite{HP}, \cite{sozo}, \cite{DeZo}).
\begin{definition}
 Let $\Omega$ be a given domain with the boundary  $\Gamma= \partial \Omega$ of class $\mathcal C^2,$ and $V\in \mathcal C^1(U; \mathbb{R}^N)$ be a vector field; $U$ be  an open neighborhood of the manifold $\Gamma \in \mathbb{R}^N$. Then the following notation is used to define the tangential divergence as:
 \begin{eqnarray*}
 div_{\Gamma}V = (divV -  \langle DV  \nu, \nu\rangle_{\mathbb{R}^N})\vert _{\Gamma} \in \mathcal C(U).
 \end{eqnarray*}
Let $\Omega$ be a given domain with the boundary  $\Gamma= \partial \Omega$ of class $\mathcal C^2,$ and $V\in \mathcal C^1(\Gamma; \mathbb{R}^N)$ be a vector field. The tangential divergence of $V$ on $\Gamma$ is given by:
 \begin{eqnarray*}
 div_{\Gamma}V = (div \tilde V -  \langle D\tilde V  \nu, \nu\rangle_{\mathbb{R}^N})\vert _{\Gamma} \in \mathcal C(\Gamma).
 \end{eqnarray*}
 where $\tilde V $is any $C^1$ extension of $V$ to an open neighborhood of $\Gamma \subset \mathbb{R}^N$
  and
   $$D\tilde V= ((D\tilde V)_{ij})_{i,j \in \{1, \cdots N\}}, (D\tilde V)_{ij}:= \frac{\partial \tilde V_i}{\partial x_j}.$$ 
\end{definition} 
The notion of tangential gradient  $\nabla_{\Gamma}$ on $\Gamma$
\begin{eqnarray*}
\nabla_\Gamma:  \mathcal C^2 (\Gamma) \rightarrow \mathcal C^1 (\Gamma, \mathbb{R}^N)
\end{eqnarray*}
is defined as follows
\begin{definition}
Let  $ h\in \mathcal C^2(\Gamma)$ be given and let $\tilde h$ be an extension of $h,\tilde h \in \mathcal C^2(U)$ and $\tilde h_{\vert \Gamma}=h;$ $U$ be an open neighborhood of $ \Gamma$ in $\mathbb{R}^ N$. Then
\begin{eqnarray*}
\nabla_\Gamma h =  {\nabla \tilde h}_{\vert \Gamma} - \frac{\partial \tilde h}{\partial \nu} \nu.
\end{eqnarray*}
\end{definition}
\begin{theorem} (Derivative of a simple Neumann eigenvalue)\\
		Let $\Omega$ be a bounded open of $\mathbb{R}^N$ of class $\mathcal C^2$. Suppose $\lambda(\Omega)$ is a simple eigenvalue and $u=u_\Omega$ its associated eigenfunction. Then the functions $t\mapsto \lambda(t)=\lambda(\Omega_t)$, $t\mapsto u (\Omega_t)$ are differentiable at $t=0$. The derivative of the eigenvalue is given by
		\begin{eqnarray}\label{val2}
			\lambda'(0)=\int_{\partial\Omega}\left(\lvert\nabla u \rvert^2-\lambda u^2\right)(V\cdot \nu)\;d\sigma
		\end{eqnarray}
		and the derivative $u'$ of $u_t= u(\Omega_t)$ is a solution of 
		\begin{equation*}
			\begin{cases}
				-\Delta u  ' =\lambda'(0) u +\lambda u ' \;\;\;\text{in}\;\;\Omega \\[0.2cm]  \frac{\partial u'}{\partial \nu}=\left(-\frac{\partial^2 u}{\partial \nu^2}\right) V\cdot \nu+\nabla u\cdot\nabla_\Gamma(V\cdot \nu)\;\;\;\text{on}\;\;{\partial\Omega} \\[0.2cm]  \displaystyle \int_{\partial\Omega}u^2(V\cdot \nu)\;d\sigma+2\int_\Omega u u' \;dx=0.
			\end{cases}
		\end{equation*}
		$\nabla_\Gamma$ is the tangential gradient.
	\end{theorem}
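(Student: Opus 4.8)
The plan is to transport the moving eigenvalue problem back to the fixed domain $\Omega$, establish differentiability from the simplicity hypothesis, differentiate the transported equation together with its Neumann condition at $t=0$, and then read off $(\ref{val2})$ from a Green-formula identity combined with the normalization constraint.

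First I would set $\Omega_t=(I+tV)(\Omega)$ and write the Neumann eigenvalue problem on $\Omega_t$ with the normalization $\int_{\Omega_t}u_t^2\,dx=1$. Composing with $\Phi_t=I+tV$ pulls everything back to $\Omega$ and replaces the problem by one with $t$-dependent coefficients (the pulled-back metric and the Jacobian $\det D\Phi_t$) on the fixed domain. Since $\lambda(\Omega)$ is simple, the corresponding spectral projection is one-dimensional and depends smoothly on these coefficients; applying the implicit function theorem (or Kato's analytic perturbation theory) to the map sending $(t,\lambda,v)$ to the pair (transported equation, normalization) yields that $t\mapsto\lambda(t)$ and $t\mapsto u(\Omega_t)$ are differentiable at $t=0$. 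Simplicity is precisely what upgrades the mere directional derivative available in the multiple-eigenvalue case (cf. Theorem \ref{thm}) to an honest derivative.

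Next I would differentiate the state equation. Writing the shape derivative as $u'=\dot u-\nabla u\cdot V$, differentiation of $\Delta u_t+\lambda(t)u_t=0$ in the interior yields at once $-\Delta u'=\lambda'(0)u+\lambda u'$. The delicate point is to differentiate the condition $\partial_{\nu_t}u_t=0$ on the moving boundary: using the tangential calculus of the Definitions above together with the Hadamard structure formula for the shape derivative of a boundary trace, the transport of $\Gamma$ and the variation of the outward normal contribute exactly the terms $-\frac{\partial^2 u}{\partial\nu^2}(V\cdot\nu)$ and $\nabla u\cdot\nabla_\Gamma(V\cdot\nu)$, giving the stated boundary condition for $u'$. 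Differentiating the normalization $\int_{\Omega_t}u_t^2\,dx=1$ via the transport formula for volume integrals produces the third, integral, condition. I expect this differentiation of the Neumann trace on a deforming boundary to be the main obstacle, since it couples the motion of $\Gamma$, the variation of $\nu$, and the second normal derivative of $u$.

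Finally, to obtain $(\ref{val2})$, I would test $-\Delta u'=\lambda'(0)u+\lambda u'$ against $u$ and $-\Delta u=\lambda u$ against $u'$, integrate by parts, and subtract. Because $\partial_\nu u=0$, all interior contributions and the $u'$-trace term cancel, and the normalization $\int_\Omega u^2\,dx=1$ reduces the identity to $\lambda'(0)=-\int_{\partial\Omega}\frac{\partial u'}{\partial\nu}\,u\,d\sigma$. Inserting the boundary condition for $\partial_\nu u'$, replacing $\partial_\nu^2 u$ through the near-boundary splitting $\Delta u=\partial_\nu^2 u+H\,\partial_\nu u+\Delta_\Gamma u$ (so $\partial_\nu^2 u=-\lambda u-\Delta_\Gamma u$ since $\partial_\nu u=0$), and performing one tangential integration by parts over the closed surface $\partial\Omega$, the terms carrying $\nabla_\Gamma(V\cdot\nu)$ cancel against each other. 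Using $\nabla u=\nabla_\Gamma u$ on $\partial\Omega$ (again because $\partial_\nu u=0$) then collapses the remaining boundary integral to $\int_{\partial\Omega}(|\nabla u|^2-\lambda u^2)(V\cdot\nu)\,d\sigma$, which is precisely $(\ref{val2})$.
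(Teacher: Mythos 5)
Your proposal is correct, but there is nothing in the paper to measure it against: the theorem is recalled as a classical result of shape calculus and is not proved in the text. The only indication given (after the companion statement for multiple Neumann eigenvalues) is that ``the computation techniques used in the Dirichlet case lead us to get the desired result,'' the Dirichlet case itself being deferred to \cite{berger2015optimisation}, \cite{munnier2000stabilite}, \cite{rousselet1983shape}. What you wrote is the standard derivation and all the steps check out: the pull-back to the fixed domain plus the implicit function theorem (or Kato perturbation theory) is exactly where simplicity of $\lambda(\Omega)$ is used to upgrade the directional derivative of the multiple case to a genuine derivative; the interior equation and the normalization constraint follow as you say. For the step you rightly identify as delicate, the safest route is not to differentiate the pointwise trace $\partial_{\nu_t}u_t=0$ but the variational identity $\int_{\Omega_t}\nabla u_t\cdot\nabla\phi\,dx=\lambda(t)\int_{\Omega_t}u_t\phi\,dx$ via Hadamard's formula, which gives
$$\frac{\partial u'}{\partial\nu}=\mbox{div}_\Gamma\big((V\cdot\nu)\nabla_\Gamma u\big)+\lambda u\,(V\cdot\nu)\quad\mbox{on }\partial\Omega,$$
equivalent to the stated boundary condition after substituting $\Delta_\Gamma u=-\lambda u-\frac{\partial^2u}{\partial\nu^2}$ (valid since $\frac{\partial u}{\partial\nu}=0$). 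Your closing computation is also correct: the two Green identities give $\lambda'(0)=-\int_{\partial\Omega}u\,\frac{\partial u'}{\partial\nu}\,d\sigma$, the tangential integration by parts over the closed surface cancels the $\nabla_\Gamma(V\cdot\nu)$ terms, and $\nabla u=\nabla_\Gamma u$ on $\partial\Omega$ collapses the result to $(\ref{val2})$.
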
 
	
	\begin{theorem}\textnormal{(Derivative of a multiple Neumann eigenvalue)}\\
		Let $\Omega$ be a bounded open of class $\mathcal{C}^2$. Suppose $\lambda_k(\Omega)$ is an eigenvalue of multiplicity $p\geq 2$. Let $u_{k_1},u_{k_2},\ldots,u_{k_p}$ be an orthonormal family (for the scalar product $L^2$) of eigenvalues associated with $\lambda_k$. Then $t\mapsto\lambda_k(\Omega_t)$ has a directional derivative at $t=0$ which is one of the eigenvalues of the matrix $p\times p$ defined by
		\begin{align}
			\mathcal{M}=(m_{i,j})\,\,\text{with}\,\,m_{i,j} =\int_{\partial\Omega}(\nabla u_{k_i}\cdot\nabla u_{k_j})\;d\sigma -k^2\int_{\partial\Omega}u_{k_i} u_{k_j} (V\cdot \nu )\;d\sigma\;\;i,j=1,\ldots,p
		\end{align}
		where $V\cdot \nu$ is the normal displacement of the boundary induced by the deformation field $V$.
	\end{theorem}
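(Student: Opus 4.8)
The plan is to transport the eigenvalue problem onto the fixed reference domain $\Omega$ and then invoke the abstract perturbation theory for selfadjoint families of operators, the entries of the matrix $\mathcal{M}$ being read off from the derivatives of the transported bilinear forms. Concretely, I would first write the Neumann problem in weak form: $u\in H^1(\Omega)$ satisfies $\int_\Omega \nabla u\cdot\nabla v\,dx = \lambda\int_\Omega uv\,dx$ for all $v\in H^1(\Omega)$. Letting $\Phi_t = I + tV$ (or the flow of $V$) and $\Omega_t = \Phi_t(\Omega)$, the change of variables $x\mapsto\Phi_t(x)$ pulls the problem on $\Omega_t$ back to $\Omega$ and produces two $t$-dependent symmetric bilinear forms on the fixed space $H^1(\Omega)$,
$$a_t(\varphi,\psi) = \int_\Omega (A_t\nabla\varphi)\cdot\nabla\psi\,dx, \qquad b_t(\varphi,\psi) = \int_\Omega J_t\,\varphi\psi\,dx,$$
with $J_t = \det D\Phi_t$ and $A_t = J_t\,(D\Phi_t)^{-1}(D\Phi_t)^{-T}$. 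The perturbed eigenvalues solve the generalized pencil $a_t(u,\cdot) = \lambda(t)\,b_t(u,\cdot)$, and $a_t,b_t$ depend analytically on $t$ near $0$ because $V$ is smooth.

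Second, I would apply the Rellich--Kato theorem for analytic selfadjoint families, in the form used by Rousselet and recalled in \cite{berger2015optimisation}: although the $p$ eigenvalue branches issuing from the $p$-fold eigenvalue $\lambda_k$ need not be differentiable as individually labelled functions, they admit one-sided (directional) derivatives at $t=0$, and these derivatives are exactly the eigenvalues of the $p\times p$ symmetric matrix
$$m_{i,j} = \frac{d}{dt}\Big[a_t(u_{k_i},u_{k_j}) - \lambda_k\, b_t(u_{k_i},u_{k_j})\Big]\Big|_{t=0},$$
where the $u_{k_i}$ form the chosen $L^2$-orthonormal basis of the eigenspace. This is the step that substitutes for plain differentiability: the directional derivative of $t\mapsto\lambda_k(\Omega_t)$ is one of these eigenvalues.

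Third, I would compute the entries explicitly. Differentiating $J_t$ and $A_t$ at $t=0$ gives the classical expressions $\dot J_0 = \mathrm{div}\,V$ and $\dot A_0 = (\mathrm{div}\,V)\,I - DV - DV^{T}$, so that
$$m_{i,j} = \int_\Omega \big(\dot A_0\,\nabla u_{k_i}\big)\cdot\nabla u_{k_j}\,dx - \lambda_k\int_\Omega (\mathrm{div}\,V)\,u_{k_i}u_{k_j}\,dx.$$
Then I would convert this volume expression into the stated boundary integral: integrating by parts, using $-\Delta u_{k_i} = \lambda_k u_{k_i}$ together with the Neumann condition $\partial u_{k_i}/\partial\nu = 0$ and the tangential-calculus identities recalled above (on $\partial\Omega$ each $\nabla u_{k_i}$ is purely tangential), the interior contributions cancel and one is left with $m_{i,j} = \int_{\partial\Omega}(\nabla u_{k_i}\cdot\nabla u_{k_j} - \lambda_k u_{k_i}u_{k_j})(V\cdot\nu)\,d\sigma$, which is the asserted matrix with $\lambda_k = k^2$.

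The main obstacle is the second step: the failure of classical differentiability at a multiple eigenvalue. One must justify carefully that the branch selection produced by the analytic perturbation theorem yields genuine directional derivatives of the ordered eigenvalue $\lambda_k(\Omega_t)$ and that these coincide with the spectrum of $\mathcal{M}$; this is where the selfadjointness of the transported pencil $(a_t,b_t)$ and the $L^2$-orthonormality of the $u_{k_i}$ are essential, since a non-orthonormal frame would spoil the identification of the derivative matrix. By comparison, the boundary reduction in the last step is technical but routine, requiring only the $\mathcal{C}^2$ regularity of $\partial\Omega$ to validate the integration by parts and the tangential identities.
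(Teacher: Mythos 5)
Your proposal is correct and follows essentially the route the paper itself points to: the paper gives no proof of this theorem, stating only that the computation techniques of the Dirichlet case (pull-back to the fixed domain and Rellich-type analytic perturbation of the resulting selfadjoint pencil, as in \cite{rousselet1983shape}, \cite{munnier2000stabilite}, \cite{berger2015optimisation}) yield the result, which is precisely what you carry out. One remark: your computation produces $m_{i,j}=\int_{\partial\Omega}\left(\nabla u_{k_i}\cdot\nabla u_{k_j}-k^{2}\,u_{k_i}u_{k_j}\right)(V\cdot\nu)\,d\sigma$, with the factor $V\cdot\nu$ on \emph{both} terms, whereas the statement as printed omits it from the first integral; consistency with the simple-eigenvalue formula (\ref{val2}) and with the Dirichlet analogue indicates that your form is the correct one and that the printed matrix contains a typographical omission.
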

	For the proof,  the  computation techniques  used  in  the Dirichlet case lead us to get the desired result.
\subsection{ Riemannian geometry and  sufficient conditions for shapes}
The aim is to analyze the  correlation of the  Riemannian geometry on called  infinite dimensional manifolds $B_{e}$ with shape optimization.\\
The author would like to stress, what follows has been already done in pioneering works, see \cite{mm1},  \cite{mm2},  \cite{mm3}. Let us  reproduce some fundamental steps related to our work.\\

 Let   $\Omega$ be a simply connected and compact subset of $\mathbb{R}^{2}$ with $\Omega\neq\emptyset$ and $\mathcal{C}^{\infty}$ boundary $\partial\Omega$. As is always the case in shape optimization, the boundary of the shape is all that matters. Thus we can identify the set of all shapes with the set of all those boundaries.\\
 
  Let $Emb(\mathbb{S}^{1},\mathbb{R}^{2})$ be the set of all  smooth embeddings on $\mathbb{S}^{1}$ in the plan $\mathbb{R}^{2}$, its elements are the injective mappings $c:\mathbb{S}^{1}\longrightarrow\mathbb{R}^{2}$. Let $Diff(\mathbb{S}^{1})$ stands for the set of all $\mathcal{C}^{\infty}$ diffeomorphism on $\mathbb{S}^{1}$ which acts diferentiably on $Emb(\mathbb{S}^{1},\mathbb{R}^{2}).$ Let us consider $B_{e}$ as the quotient $Emb(\mathbb{S}^{1},\mathbb{R}^{2})\slash Diff(\mathbb{S}^{1})$. In terms of sets, we have 
$$
B_{e}(\mathbb{S}^{1},\mathbb{R}^{2}):=\{ \ [c]\ /\  c\in Emb \}\ 
\mbox{where} \  [c]:=\{c^{\prime}\in Emb\  /\ c^{\prime}\sim c \}.$$
To characterize the tangent space to $B_{e}$ we start with the characterization of the tangent space to $Emb$ denoted $T_{c}Emb$ and the tangent space to the orbit of $c$ by $Diff(\mathbb{S}^{1})$ at $c$ denoted by $T_{c}(Diff(\mathbb{S}^{1}).c)$. Thus the tangent space to $B_{e}$ is then identified with a supplementary subspace of $T_{c}(Diff(\mathbb{S}^{1}).c)$ in $T_{c}Emb$. 
\begin{proposition}
Let $c\in Emb$, then the tangent space at $c$ to $Emb$ is given by: $T_{c}Emb=\mathcal{C}^{\infty}(\mathbb{S}^{1},\mathbb{R}^{2}).$
\end{proposition}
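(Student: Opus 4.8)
The plan is to exhibit $Emb(\mathbb{S}^{1},\mathbb{R}^{2})$ as an \emph{open} subset of the topological vector space $\mathcal{C}^{\infty}(\mathbb{S}^{1},\mathbb{R}^{2})$ and then to invoke the elementary fact that the tangent space, at any point, of an open subset of a vector space is canonically the vector space itself. Thus the whole argument reduces to two things: first, that the conditions defining an embedding are open conditions on $c$; second, that the model space $\mathcal{C}^{\infty}(\mathbb{S}^{1},\mathbb{R}^{2})$, equipped with its Fr\'echet (equivalently, convenient) structure, is a genuine linear space in which this ``open subset'' statement makes sense. I would work throughout in the convenient-calculus framework used in \cite{mm1}, \cite{mm2}, \cite{mm3}, so that $\mathcal{C}^{\infty}(\mathbb{S}^{1},\mathbb{R}^{2})$ is a smooth vector space and $Emb$ inherits its manifold structure as an open subset.

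First I would establish openness. Since $\mathbb{S}^{1}$ is compact, a smooth map $c:\mathbb{S}^{1}\to\mathbb{R}^{2}$ is an embedding precisely when it is an injective immersion, the homeomorphism-onto-image condition then being automatic. I would verify that each of the two remaining conditions is open. The immersion condition $c'(\theta)\neq 0$ for all $\theta$ is open because, by compactness, $m:=\min_{\theta}|c'(\theta)|>0$ is attained, and any map $\tilde c$ close to $c$ in the $\mathcal{C}^{1}$ (hence a fortiori $\mathcal{C}^{\infty}$) topology satisfies $|\tilde c'(\theta)|>m/2$ everywhere. Injectivity among immersions is likewise open: if $c$ is an injective immersion of the compact manifold $\mathbb{S}^{1}$, then by a standard tubular-neighbourhood and compactness argument there is a neighbourhood of $c$ all of whose members remain injective. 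Intersecting the two open sets shows $Emb$ is open.

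Once openness is in hand, the identification of the tangent space is immediate, and I would phrase it via curves. A tangent vector at $c$ is the velocity $\frac{d}{dt}\big|_{t=0}c_{t}$ of a smooth curve $t\mapsto c_{t}$ in $Emb$ with $c_{0}=c$; differentiating a curve of $\mathbb{R}^{2}$-valued maps produces again an element of $\mathcal{C}^{\infty}(\mathbb{S}^{1},\mathbb{R}^{2})$, giving a linear injection $T_{c}Emb\hookrightarrow\mathcal{C}^{\infty}(\mathbb{S}^{1},\mathbb{R}^{2})$. For surjectivity, given any $h\in\mathcal{C}^{\infty}(\mathbb{S}^{1},\mathbb{R}^{2})$ the straight-line curve $c_{t}:=c+th$ lies in $Emb$ for $|t|$ small, by openness, and has velocity $h$ at $t=0$; hence every $h$ is realised and the map is a linear isomorphism. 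This yields $T_{c}Emb=\mathcal{C}^{\infty}(\mathbb{S}^{1},\mathbb{R}^{2})$.

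The main obstacle is not conceptual but functional-analytic: one must ensure that these infinite-dimensional ``manifold'' statements are justified within a fixed calculus. The delicate point is the openness of injectivity, which genuinely uses the compactness of $\mathbb{S}^{1}$ together with control of the map and its first derivative; in the convenient setting this is expressed through smooth curves and the $c^{\infty}$-topology rather than through a norm, so some care is needed to see that sufficiently close perturbations preserve injectivity. Granting the framework of \cite{mm1}, \cite{mm2}, \cite{mm3}, however, these facts are standard, and the identification of the tangent space then follows exactly as above.
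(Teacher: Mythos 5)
Your argument is correct: the paper itself states this proposition without proof, deferring to the pioneering works \cite{mm1}, \cite{mm2}, \cite{mm3}, and the proof given there is exactly the one you reconstruct, namely that $Emb(\mathbb{S}^{1},\mathbb{R}^{2})$ is an open subset of the Fr\'echet space $\mathcal{C}^{\infty}(\mathbb{S}^{1},\mathbb{R}^{2})$ (openness of the immersion condition plus openness of injectivity for immersions of a compact source), so that the tangent space at any point is the ambient vector space. This matches the intended proof, so there is nothing to add.
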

\begin{proposition}\label{proposition-2}
The tangent space to the orbit of $c$ by $Diff(\mathbb{S}^{1})$, is the subspace of  $T_{c}Emb$ formed by vectors $m(\theta)$ of the type $c_{\theta}(\theta)=c^{'}(\theta)$ times a function.
\end{proposition}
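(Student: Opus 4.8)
The plan is to compute the tangent space to the orbit directly by differentiating curves. The group $Diff(\mathbb{S}^{1})$ acts on $Emb(\mathbb{S}^{1},\mathbb{R}^{2})$ by reparametrization, $\varphi \cdot c = c\circ\varphi$, so the orbit of $c$ is $Diff(\mathbb{S}^{1})\cdot c = \{c\circ\varphi : \varphi\in Diff(\mathbb{S}^{1})\}$. A tangent vector to this orbit at $c$ is, by definition, the velocity at $t=0$ of a smooth curve $t\mapsto c\circ\varphi_{t}$ lying in the orbit and passing through $c$ at $t=0$; equivalently $\varphi_{t}$ is a curve in $Diff(\mathbb{S}^{1})$ with $\varphi_{0}=\mathrm{id}$.

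First I would take such a curve $\varphi_{t}$ and differentiate by the chain rule: writing $a(\theta):=\frac{d}{dt}\big|_{t=0}\varphi_{t}(\theta)$, which is a smooth function on $\mathbb{S}^{1}$ (an element of the Lie algebra of $Diff(\mathbb{S}^{1})$, i.e. a vector field $a\,\partial_{\theta}$ on the circle), one obtains
$$\frac{d}{dt}\Big|_{t=0} c(\varphi_{t}(\theta)) = c'(\varphi_{0}(\theta))\,\frac{d}{dt}\Big|_{t=0}\varphi_{t}(\theta) = a(\theta)\,c'(\theta).$$
This shows every tangent vector to the orbit has the claimed form $m(\theta)=a(\theta)\,c_{\theta}(\theta)$, with $c_{\theta}=c'$.

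For the reverse inclusion I would check that every smooth function is realized. Given $a\in\mathcal{C}^{\infty}(\mathbb{S}^{1})$, let $\varphi_{t}$ be the flow of the vector field $a\,\partial_{\theta}$ (or simply $\varphi_{t}(\theta)=\theta+t\,a(\theta)$ for $|t|$ small, which is a diffeomorphism of $\mathbb{S}^{1}$ since its derivative $1+t\,a'(\theta)$ stays positive). Then $\varphi_{0}=\mathrm{id}$ and $\frac{d}{dt}\big|_{t=0}\varphi_{t}=a$, so $a\,c'$ is indeed tangent to the orbit. Because $c$ is an embedding, $c'(\theta)\neq 0$ for every $\theta$, hence $a\mapsto a\,c'$ is injective and identifies $\mathcal{C}^{\infty}(\mathbb{S}^{1})$ with the subspace $\{a\,c' : a\in\mathcal{C}^{\infty}(\mathbb{S}^{1})\}$ of $T_{c}Emb=\mathcal{C}^{\infty}(\mathbb{S}^{1},\mathbb{R}^{2})$, which is precisely the stated description.

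The main obstacle is the infinite-dimensional nature of the setting: one must justify working within the Fr\'echet-manifold structures on $Diff(\mathbb{S}^{1})$ and $Emb$, that the orbit map $\varphi\mapsto c\circ\varphi$ is smooth, and that its differential at the identity is exactly the map $a\mapsto a\,c'$ computed above. The hard part is really this bookkeeping about the smooth structures rather than the one-line chain-rule computation; the nonvanishing of $c'$ (the immersion property built into $Emb$) is what guarantees the image is a genuine subspace isomorphic to the Lie algebra.
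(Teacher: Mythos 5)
Your argument is correct and is the standard one: differentiate the orbit map $\varphi\mapsto c\circ\varphi$ along a curve $\varphi_t$ through the identity to obtain $a(\theta)\,c'(\theta)$ by the chain rule, and realize an arbitrary $a\in\mathcal C^{\infty}(\mathbb S^{1})$ via the flow of $a\,\partial_\theta$ to get the reverse inclusion. The paper itself states this proposition without any proof, deferring the whole Riemannian framework on $B_e$ to the Michor--Mumford references \cite{mm1}, \cite{mm2}, \cite{mm3}, so there is nothing in-paper to compare against; your computation (including the remark that the Fr\'echet-manifold bookkeeping, not the chain rule, is the real content) is precisely the argument those references supply.
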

\begin{remark}
The choice of the supplementary must abide by the action  of $Diff(\mathbb{S}^{1})$ i.e we choose a supplementary of $T_{c}(Diff(\mathbb{S}^{1}).c)$ in $T_{c}Emb$ stable by the action of $Diff(\mathbb{S}^{1})$. For that it suffices to define a metric on $Emb$ for which $Diff(\mathbb{S}^{1})$ acts isometrically and define the supplementary of $T_{c}(Diff(\mathbb{S}^{1}).c)$ as its orthogonal with respect to this metric.
\end{remark}
\begin{definition}
Let $G^{0}$ be metric invariant by the action of $Diff(\mathbb{S}^{1})$ on the manifold $Emb(\mathbb{S}^{1},\mathbb{R}^2)$, defined by the application:
$$
\begin{array}{ccccl}
G^{0} & : & T_{c}Emb\times T_{c}Emb & \to & \mathbb{R} \\
 & & (h,m) & \mapsto & \displaystyle \int_{\mathbb{S}^{1}}\big<h(\theta),m(\theta)\big>|c^{\prime}(\theta)|d\theta \\
\end{array}
$$
where $\big<h(\theta),m(\theta)\big>$ is the ordinary scalar product of $h(\theta)$ and $m(\theta)$ in $\mathbb{R}^{2}$. 
\end{definition}
 \begin{proposition}\label{proposition-4}
 Let $c\in B_{e}$ then $T_{c}B_{e}$ is colinear to the outer unit normal of $\Omega$. In other words $$T_{c}B_{e}\simeq\{ h\ |\  h=\alpha \nu,\alpha\in\mathcal{C}^{\infty}(\mathbb{S}^{1},\mathbb{R}) \}.$$
 \end{proposition}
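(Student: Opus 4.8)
The plan is to realize $B_e$ as the base of a Riemannian submersion and to identify $T_{[c]}B_e$ with the horizontal subspace, namely the $G^{0}$-orthogonal complement of the orbit directions inside $T_cEmb$. Following the remark preceding the definition of $G^{0}$, I would use that $Diff(\mathbb{S}^{1})$ acts isometrically for the metric $G^{0}$; this guarantees that the $G^{0}$-orthogonal complement of the vertical space $T_c(Diff(\mathbb{S}^{1}).c)$ is stable under the group action and hence descends to a well-defined model for the tangent space of the quotient. It therefore suffices to compute this orthogonal complement explicitly.

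By Proposition \ref{proposition-2}, the vertical space is exactly the space of tangential fields $\{ f(\theta)\,c'(\theta)\ :\ f\in\mathcal{C}^{\infty}(\mathbb{S}^{1},\mathbb{R})\}$. A field $h\in T_cEmb=\mathcal{C}^{\infty}(\mathbb{S}^{1},\mathbb{R}^{2})$ lies in the orthogonal complement if and only if
\[
G^{0}(h, f c')=\int_{\mathbb{S}^{1}} f(\theta)\,\langle h(\theta),c'(\theta)\rangle\,|c'(\theta)|\,d\theta = 0
\]
for every $f\in\mathcal{C}^{\infty}(\mathbb{S}^{1},\mathbb{R})$. Since $c$ is an embedding we have $|c'(\theta)|>0$ everywhere, so the fundamental lemma of the calculus of variations forces $\langle h(\theta),c'(\theta)\rangle=0$ for all $\theta$.

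In the plane the orthogonal complement of the tangent direction $c'(\theta)$ is one-dimensional and spanned by the outer unit normal $\nu(\theta)$; hence $h(\theta)=\alpha(\theta)\,\nu(\theta)$, where the scalar $\alpha(\theta)=\langle h(\theta),\nu(\theta)\rangle$ is uniquely determined and smooth because $h$ and $\nu$ are. Conversely, every field of the form $\alpha\nu$ is $G^{0}$-orthogonal to all tangential fields, since $\nu\perp c'$ pointwise. This establishes the identification $T_{c}B_{e}\simeq\{\,h\ |\ h=\alpha\nu,\ \alpha\in\mathcal{C}^{\infty}(\mathbb{S}^{1},\mathbb{R})\,\}$.

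The main obstacle is the first step, the abstract identification of the quotient's tangent space with the horizontal subspace: $G^{0}$ is only a \emph{weak} Riemannian metric on the infinite-dimensional manifold $Emb$, so the splitting $T_cEmb = T_c(Diff(\mathbb{S}^{1}).c)\oplus(\,\cdot\,)^{\perp}$ and the submersion picture are not automatic and must be justified, as in \cite{mm1}, \cite{mm2}, \cite{mm3}. Here the justification is clean because the tangential-normal decomposition $h=\langle h,\tau\rangle\,\tau+\langle h,\nu\rangle\,\nu$, with $\tau=c'/|c'|$, is pointwise and smooth; consequently the complement really is the closed subspace of normal fields rather than merely a dense subspace, and the codimension-one nature of the hypersurface is what reduces the normal bundle to scalar multiples of a single $\nu$.
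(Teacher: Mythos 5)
Your proof is correct and follows exactly the route the paper sets up (the paper itself omits the proof, deferring to \cite{mm1}--\cite{mm3}, but its Proposition on the orbit tangent space, the remark on choosing a $Diff(\mathbb{S}^{1})$-invariant metric, and the definition of $G^{0}$ are precisely the ingredients you use): you compute the $G^{0}$-orthogonal complement of the tangential fields and conclude pointwise normality. Your closing caveat about weak metrics, and the observation that the pointwise tangential--normal splitting makes the complement genuinely closed here, is a worthwhile addition rather than a deviation.
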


Now let us consider the following terminology:
$$ds=|c_{\theta}|d\theta\qquad\mbox{arc length}. $$
\begin{definition}
A Sobolev-type metric on the manifold $B_{e}(\mathbb{S}^{1},\mathbb{R}^{2})$ is map:
$$
\begin{array}{ccccl}
G^{A} & : & T_{c}B_{e}\times T_{c}B_{e} & \to & \mathbb{R} \\
 & & (h,m) & \mapsto & \displaystyle \int_{\mathbb{S}^{1}}(1+AK_{c}^{2}(\theta))\big<h(\theta),m(\theta)\big>|c^{\prime}(\theta)|d\theta \\
\end{array}
$$
where $K_{c}$ is the  curvature of $c$ and $A$ a positive real. 
\end{definition}
\begin{remark}
\begin{enumerate}
\item By setting $h=\alpha \nu$, $m=\beta \nu$ and by parametrizing $c(s)$ by arc length we have :
$$
G^{A}(h,m)=\int_{\partial\Omega}(1+AK_{c}^{2}(\theta))\alpha\beta ds.
$$
\item If $A> 0,$ $G^{A}$ is a Riemannian metric.
\end{enumerate}
\end{remark}
Before proceeding further, let us define the first Sobolev metric which  generalize the above  Riemannian metric and does not induce the phenomenon of vanishing geodesic distance studied in \cite{mm2}.
\begin{definition}
The first Sobolev metric on $B_{e}(S^1, \mathbb{R}^2)$ is given by
\begin{eqnarray*}
g: T_c (B_{e}(S^1, \mathbb{R}^2))\times T_c( B_{e}(S^1, \mathbb{R}^2)) \rightarrow \mathbb{R},\\
 (h, k) \mapsto \int_{S^1}\langle (I- AD_s^2)h, k\rangle ds,
\end{eqnarray*}
where $A> 0$ and $D_s$ denotes the arc length derivatives with respect to $c$ defined by:\\ $D_s:= \frac{\partial_{\theta}}{|c_{\theta}|}, c_{\theta}= \frac{\partial c}{\partial \theta}, ds= |c_{\theta}| d\theta.$
\end{definition}
In Riemannian geometry it is important to have a good understanding of the so called  covariant derivative which is and operation involving in the differential calculus in differential geometry. In  what we are going to discuss in next section, the expression of covariant derivative appears in Riemannian shape Hessian. And its computation  becomes a key step.
Let us reproduce the following theorem  due  to Welker (cf \cite{W} for more details).
\begin{theorem}
Let $A>0$ and let $h, m \in T_c Emb(S^1, \mathbb{R}^2)$ denote vector field along $c \in Emb(S^1, \mathbb{R}^2). $ The arc length derivative with respect to $c$ is denoted by $D_s.$ Moreover, $L_1:= I-AD_s^2$ is a differential operator on $\mathcal C^{\infty}(S^1, \mathbb{R}^2)$ and $L_1^{-1}$ denotes the inverse operator. The covariant derivative associated with the Sobolev metric $g$ can be expressed as
\begin{eqnarray*}
\nabla_m h= L_1^{-1}(K_1 (h))\; \; \mbox{with}\; \; K_1:= \frac{1}{2}\langle D_s m, v\rangle (I+ A D_s^2),
\end{eqnarray*}
where $v= \frac{c_{\theta}}{|c_{\theta}|}$ denotes the unit tangent vector.
\end{theorem}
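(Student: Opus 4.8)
The plan is to identify $\nabla$ as the Levi--Civita connection of the weak Riemannian metric $g$ and to pin it down through its two defining properties: compatibility with $g$ and vanishing torsion. Since $Emb(S^1,\mathbb{R}^2)$ is an open subset of the vector space $\mathcal C^{\infty}(S^1,\mathbb{R}^2)$, I would work throughout with \emph{constant} vector fields $h,m,k$ (each assigning the same element of $\mathcal C^{\infty}(S^1,\mathbb{R}^2)$ to every $c$), for which all Lie brackets vanish. The Koszul formula then collapses to $2g_c(\nabla_m h,k)=D_{c,m}\,g_c(h,k)+D_{c,h}\,g_c(m,k)-D_{c,k}\,g_c(m,h)$, where $D_{c,m}$ denotes differentiation of the indicated $c$-dependent quantity in the direction $m$. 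The objective is to evaluate this right-hand side explicitly and recognize it as $2g_c\big(L_1^{-1}K_1(h),k\big)=2\int_{S^1}\langle K_1(h),k\rangle\,ds$.

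The computation rests on three first-variation formulas, which I would establish first. With $v=c_\theta/|c_\theta|$ the unit tangent, differentiating $|c_\theta|=\langle c_\theta,c_\theta\rangle^{1/2}$ in the direction $m$ gives $D_{c,m}(ds)=\langle D_s m,v\rangle\,ds$; consequently, since $D_s=|c_\theta|^{-1}\partial_\theta$, the operator varies by $D_{c,m}(D_s)=-\langle D_s m,v\rangle\,D_s$. I would also record the Frenet relations $D_s v=K_c\nu$ and $D_s\nu=-K_c v$, which enter when arc-length derivatives fall on $v$. Finally, because $\int_{S^1}(D_s f)\,ds=\int_{S^1}\partial_\theta f\,d\theta=0$, the operator $D_s$ is skew-adjoint for the $L^2(ds)$ pairing, so $D_s^2$ and hence $L_1=I-AD_s^2$ are self-adjoint; for $A>0$ the metric is positive and $L_1$ is invertible, which is what justifies writing $L_1^{-1}$.

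With these in hand I would use the symmetric form $g_c(h,k)=\int_{S^1}\big(\langle h,k\rangle+A\langle D_s h,D_s k\rangle\big)\,ds$ obtained from one integration by parts. Differentiating in the direction $m$ (only $ds$ and the operators $D_s$ carry the $c$-dependence, as $h,k$ are constant) yields $D_{c,m}\,g_c(h,k)=\int_{S^1}\big(\langle h,k\rangle-A\langle D_s h,D_s k\rangle\big)\langle D_s m,v\rangle\,ds$, and analogously for the two permuted terms. Substituting into the Koszul formula and integrating by parts on $S^1$ to move every arc-length derivative onto $h$ and to factor out $k$, the $A$-term changes sign (from the minus in $L_1$ to the plus in $K_1$), and the remaining pieces should reorganize into $\int_{S^1}\big\langle \tfrac12\langle D_s m,v\rangle(I+AD_s^2)h,\,k\big\rangle\,ds=\int_{S^1}\langle K_1(h),k\rangle\,ds$. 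Since this identity is to hold for every test field $k$ and $g_c$ is non-degenerate, I may strip $k$ and invert the self-adjoint operator $L_1$ to conclude $\nabla_m h=L_1^{-1}(K_1(h))$.

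I expect the main obstacle to be precisely the integration-by-parts bookkeeping in that last step: the three permuted terms generate numerous contributions involving $D_s\langle D_s m,v\rangle$, $\langle D_s h,v\rangle$, $\langle D_s k,v\rangle$ and curvature factors arising from $D_s v=K_c\nu$, and one must verify that everything not already of the form $\langle K_1(h),k\rangle$ cancels. As an independent check I would test the two Levi--Civita axioms directly against the candidate formula, namely metric compatibility $D_{c,m}\,g_c(h,k)=g_c(\nabla_m h,k)+g_c(h,\nabla_m k)$ and symmetry $\nabla_m h=\nabla_h m$ (torsion-freeness, using $[m,h]=0$ for constant fields); passing both, together with uniqueness of the Levi--Civita connection, certifies the expression. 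This cross-check is also where the precise normalization of the metric, and if needed the restriction to normal fields $h=\alpha\nu$ tangent to $B_e$, must be made explicit.
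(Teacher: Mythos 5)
The paper does not actually prove this statement: it is reproduced verbatim from Welker \cite{W} (``Let us reproduce the following theorem due to Welker''), so there is no internal proof to compare yours against. Your strategy --- view $Emb(S^1,\mathbb{R}^2)$ as an open subset of $\mathcal C^{\infty}(S^1,\mathbb{R}^2)$, work with constant extensions so all Lie brackets vanish, reduce the Koszul formula to the three directional derivatives of $g_c(h,k)$, and feed in the first variations $D_{c,m}(ds)=\langle D_s m,v\rangle\,ds$ and $D_{c,m}(D_s)=-\langle D_s m,v\rangle D_s$ --- is exactly the standard route for such metrics in the literature (Michor--Mumford, Bauer--Harms--Michor), and those variational formulas are correct, as is the observation that $D_s$ is skew-adjoint for the $L^2(ds)$ pairing so that $L_1$ is self-adjoint and invertible.

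The gap is that the step you defer as ``integration-by-parts bookkeeping'' is precisely where the argument fails to land on the stated formula. Carrying out your plan, the $m$-derivative term of the Koszul formula alone already yields $\int_{S^1}\langle D_s m,v\rangle\langle(I+AD_s^2)h,k\rangle\,ds$ plus a leftover $A\int_{S^1} D_s\bigl(\langle D_s m,v\rangle\bigr)\langle D_s h,k\rangle\,ds$, and the two remaining Koszul terms (those carrying $\langle D_s h,v\rangle$ and $\langle D_s k,v\rangle$) do not cancel these residues: for $A=0$ they reproduce the classical $L^2(ds)$ connection $\nabla_m h=\tfrac12\bigl(\langle D_s m,v\rangle h+\langle D_s h,v\rangle m+D_s(\langle h,m\rangle)v+K_c\langle h,m\rangle\nu\bigr)$, which is not $\tfrac12\langle D_s m,v\rangle h$. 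Your own proposed cross-checks expose the mismatch rather than certify the formula: $L_1^{-1}\bigl(\tfrac12\langle D_s m,v\rangle(I+AD_s^2)h\bigr)$ is not symmetric under $m\leftrightarrow h$, so it cannot equal a torsion-free connection evaluated on two commuting constant fields, and it is not metric-compatible on all of $T_cEmb$ either (an extra term $\tfrac{A}{2}\int_{S^1}\langle D_s m,v\rangle\,D_s^2\langle h,k\rangle\,ds$ survives). So either additional hypotheses from Welker's original statement are tacitly in force (e.g., restriction to normal fields on $B_e$, or to the particular fields entering the Hessian computation), or $K_1$ must contain further terms; as written, your Koszul computation proves a different identity than the one asserted, and the proposal does not resolve that discrepancy.
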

\subsection{Shape derivative  of first order and covariant derivative}
One  considers the following  constrained  shape optimization problem
\begin{eqnarray*}
\min_{\Omega \in \mathcal O} J_ 2 (\Omega), \\
J_2 (\Omega):= \int_{\partial \Omega} u^2 d\sigma + \gamma \; \; vol(\Omega), \; \; \mbox{where}\, \, \gamma < 0, \, \; vol(\Omega)= \int_{\Omega} dx.
\end{eqnarray*}
with
\begin{equation*}
			\begin{cases}
				\Delta u +\lambda(\Omega)u =0\;\;\text{in}\;\;\Omega\\\frac{\partial u}{\partial \nu}=0\;\;\text{on} \;\;\partial\Omega\\
				\int_{\Omega} u^2 dx= 1
			\end{cases}
		\end{equation*} 
		 with $\mathcal O$ standing  for a topological set recalled here after. Let $V_0>0$ be a chosen positive real value and $B$ a ball of $\mathbb{R}^N$, then 
$$\mathcal O:= \{\Omega \subset B, \; \; \mbox{open convex set of }\; \; \mathbb{R}^N (N\geq 2), \; \; \mbox{of class }\, \; \mathcal C^m, m \geq 2: |\Omega|= V_0 \}.$$
		
		To compute the shape derivative we use   a classical formulae which can be found in \cite{HP} (Proposition 5.4.18, pp 225). It is given by
		\begin{eqnarray*}
		dJ_2 (\Omega)[V]:= dJ_2 (\Omega, V)= \int_{\partial \Omega} 2 u u' + (V. \nu) [\frac{\partial u^2}{\partial \nu} + H u^2]+ \gamma  V. \nu
		\end{eqnarray*}
		where $H$ is the mean curvature of $\partial \Omega$ and $u'=\frac{d u_t}{dt}_{\vert t= 0}= \frac{du_{\Omega_t}}{dt}_{\vert t= 0} $ is the shape derivative associated to the Laplace-Neumann eigenvalue problem. 
		\\ Let us note that in two dimension $H= K_c.$\\
If  we look for $u'$ such that $u'= - \frac{ \partial u}{\partial \nu} V(0). \nu,$  then we have:\\
the material derivative, called also Lagrange derivative
$\dot u_{\vert \partial \Omega}= \frac{d}{dt}_{\vert t= 0}(u_t \circ T_t)= 0, 0< t < \epsilon< 1, $ very small; $\Omega_t:= T_t (\Omega), \{T_t\}_{t}$ a familly of diffeomorphisms. 

So  the first  shape derivative of $J_2$   is reduced  as follows:
\begin{equation*}
			\begin{cases}
				dJ_2 (\Omega, V)= \int_{\partial \Omega}  (V. \nu) [ H u^2+ \gamma ]\\
				\dot u_{\vert \partial \Omega}= 0\\
				2 \int_{\Omega} u u' dx + \int_{\partial \Omega} u^2 V. \nu d\sigma = 0
			\end{cases}
		\end{equation*} 
		with recalling that 	$div_{\partial \Omega} \nu = H $ is the  tangential divergence.
		
		In the sequel, we are going to keep in mind the condition $\dot u = 0$ on $\partial \Omega .$  This one will play a key role to get the Dirichlet condition in the Pompeiu problem.

If $V_{|\partial\Omega}=\alpha \nu$ we can still write :
\begin{eqnarray}\label{GRD1}
dJ_2(\Omega)[V]=\int_{\partial\Omega}\left(H u^2 + \gamma  \right)\alpha d\sigma.
\end{eqnarray}
It should be noted that there is a link between the shape derivative of $J_2$ and the gradient in Riemannian structures see \cite{Schu} and \cite{W}. To illustrate our claim, let us consider  the Sobolev metric $G^A$ to ease the understanding of the computations. We think that it is quite possible  to generalize this  study in higher dimensions and even with other metrics.\\
Our purpose is to  calculate the gradient of $J_2: B_{e}\to\mathbb{R}$ then we have :
\begin{equation}\label{GRD2}
dJ(\Omega)[V]=G^{A}(grad J_2(\Omega), V)
\end{equation}
if $V_{|\partial\Omega}=h$ we have
\begin{eqnarray}
dJ_{c}(h)&=&G^{A}(grad J_2(\Omega), h)\nonumber\\
dJ_{c}(h)&=&\int_{\partial\Omega}\left(1+AK^{2}_{c}\right)gradJ_2\alpha. \nonumber
\end{eqnarray}
But from (\ref{GRD2}), $$dJ_{c}(h)=\int_{\partial\Omega}\left(H u^2 + \gamma  \right)\alpha d\sigma$$ and thus 
$$
\int_{\partial\Omega}\left(H u^2 + \gamma  \right)\alpha d\sigma =\int_{\partial\Omega}\left(1+AK^{2}_{c}\right)gradJ_2\alpha d\sigma
$$
so that $$gradJ_2=\frac{1}{1+AK^{2}_{c}}\left(H u^2 + \gamma  \right).$$

The next step is  to compute the explicit form of the covariant derivative $\nabla_{h}m\in T_{c}B_{e}$ with $h, m\in T_{c}B_{e}.$\\
 The following result has been established first in a pioneering work (see \cite{Schu}), and  for additional details, see \cite{DS1}.
\begin{theorem}
Let $\Omega\subset\mathbb{R}^{2}$ be at least of class $\mathcal C ^2,$ $V, W\in\mathcal{C}^{\infty}(\mathbb{R}^{2},\mathbb{R}^{2})$ vector fields which are orthogonal to the boundaries i.e  $$V_{|\partial\Omega}=\alpha \nu$$ with $\alpha:=\big<V_{|\partial\Omega},\nu\big>$ and $$W_{|\partial\Omega}=\beta\nu$$ with $\beta:=\big<W_{|\partial\Omega},\nu\big>$ such that $V_{|\partial\Omega}=h:=\alpha \nu $,\ $W_{|\partial\Omega}=m=:\beta \nu$ belongs to the tangent space of $B_{e}$. Then the covariant derivative associated with the Riemannian metric $G^{A}$ can be expressed as follows:
\begin{eqnarray}
\nabla_V{W}:&=&\nabla_{h}{m}=\frac{\partial\beta}{\partial \nu }\alpha+\left(\frac{3AK_{c}^{3}+K_{c}}{1+AK_{c}^{2}}\right)\alpha\beta\nonumber\\
 &=&\big<D_{V}W, \nu\big>+\left(\frac{3AK_{c}^{3}+K_{c}}{1+AK_{c}^{2}}\right)\big<V, \nu \big>\big<W, \nu \big>\nonumber.
\end{eqnarray}
where $D_{V}W$ is the directional derivative of the vector field $W$ in the direction $V$.
\end{theorem}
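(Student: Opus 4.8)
The plan is to obtain $\nabla_V W$ as the Levi--Civita connection of the weak Riemannian metric $G^A$, exploiting the fact that $\mathrm{Emb}(\mathbb{S}^1,\mathbb{R}^2)$ is an open subset of the vector space $\mathcal{C}^\infty(\mathbb{S}^1,\mathbb{R}^2)$, so that it carries a canonical flat (directional) derivative $D$. I would write $\nabla_V W = D_V W + \Gamma_c(V,W)$, where $\Gamma_c$ is the symmetric bilinear Christoffel form determined by $G^A$, and then read off its component along $\nu$. The term $D_V W$ is the ambient directional derivative of the field $W$; since $W_{|\partial\Omega}=\beta\nu$ and $\langle D_V\nu,\nu\rangle=0$, its normal part is exactly $\langle D_V W,\nu\rangle=\alpha\,\frac{\partial\beta}{\partial\nu}$, which produces the first term in the statement. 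Everything then reduces to computing the normal component of $\Gamma_c(V,W)$.

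For the Christoffel form I would use the Koszul identity in the reduced form
$$2\,G^A\big(\Gamma_c(h,m),k\big)=(D_hG^A)(m,k)+(D_mG^A)(h,k)-(D_kG^A)(h,m),$$
valid because $D$ is flat and torsion free, where $(D_hG^A)(m,k)=\frac{d}{dt}\big|_{0}G^A_{c+th}(m,k)$ with $m,k$ frozen. The two ingredients needed are the first variations, in a normal direction $h=\alpha\nu$, of the arc length and of the curvature, namely $\partial_h(ds)=-K_c\,\alpha\,ds$ and $\partial_h K_c=D_s^2\alpha+K_c^2\alpha$ (these I would verify on the round circle to fix signs and conventions). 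With $\Phi:=1+AK_c^2$, this yields $\partial_h(\Phi\,ds)=\big(2AK_c\,D_s^2\alpha+AK_c^3\alpha-K_c\alpha\big)ds$, and the three Koszul terms assemble into a single integral against the test coefficient $\phi$ (writing $h=\alpha\nu$, $m=\beta\nu$, $k=\phi\nu$). After integrating by parts on the closed curve $\mathbb{S}^1$ to strip all arc-length derivatives off $\phi$, one equates the outcome with $2\int_{\partial\Omega}\Phi\,\langle\Gamma_c(h,m),\nu\rangle\,\phi\,ds$.

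The last step is to invert the inertia operator of $G^A$, which for this almost-local metric is simply pointwise multiplication by $\Phi=1+AK_c^2$; dividing by $\Phi$ is what manufactures the denominator $1+AK_c^2$ in the coefficient $\frac{3AK_c^3+K_c}{1+AK_c^2}$, and it explains the contrast with the nonlocal inversion $L_1^{-1}=(I-AD_s^2)^{-1}$ that appears in Welker's theorem \cite{W} for the genuine Sobolev metric $g$. Finally, passing from $\mathrm{Emb}$ to the quotient $B_e=\mathrm{Emb}/\mathrm{Diff}(\mathbb{S}^1)$, I would project onto the normal line $T_cB_e\simeq\{\alpha\nu\}$, discarding the tangential (reparametrization) component; this is legitimate precisely because $G^A$ is $\mathrm{Diff}(\mathbb{S}^1)$-invariant.

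The main obstacle is the bookkeeping inside the Koszul computation: the variation $\partial_hK_c=D_s^2\alpha+K_c^2\alpha$ injects second arc-length derivatives of $\alpha$ and $\beta$ into the metric variation, and it is only after the integration by parts and a delicate cancellation of the density-derivative terms that one reaches a purely local expression proportional to $\alpha\beta$ with no surviving derivatives of $\alpha$ or $\beta$. Obtaining the numerator exactly equal to $3AK_c^3+K_c$, rather than picking up spurious $D_s^2K_c$ or $D_s\alpha\,D_s\beta$ contributions, is the crux; here I would follow the normalization of \cite{Schu} and the detailed calculation of \cite{DS1}, and cross-check the final coefficient against the two consistency requirements that for $A=0$ one recovers the covariant derivative of the bare $L^2$ metric $G^0$ and that, on a round circle, it agrees with a direct computation.
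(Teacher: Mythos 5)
You should first be aware that the paper does not actually prove this theorem: it says the result ``has been established first in a pioneering work (see \cite{Schu})'' and refers the reader to \cite{DS1} ``for the details of the proof''; the only in-text justification is the subsequent remark checking torsion-freeness and asserting metric compatibility a posteriori. So your Koszul derivation is not a variant of the paper's argument but an attempt at a self-contained proof the paper never supplies. That would be valuable --- except that the step you defer to ``delicate cancellation'' is the entire content of the theorem, and with the variation formulas you yourself quote the cancellation does not occur.

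Concretely, write $\Phi=1+AK_c^2$ and use $\partial_h K_c=D_s^2\alpha+K_c^2\alpha$, $\partial_h(ds)=K_c\alpha\,ds$, so that $\partial_h(\Phi\,ds)=\bigl(2AK_cD_s^2\alpha+3AK_c^3\alpha+K_c\alpha\bigr)ds$. Feeding this into your reduced Koszul identity and integrating by parts to move $D_s^2$ off the test coefficient $\phi$ leaves, besides the local part, the contribution
\begin{equation*}
-2A\int_{\partial\Omega}\Bigl[(D_s^2K_c)\,\alpha\beta+2(D_sK_c)\,D_s(\alpha\beta)+2K_c\,D_s\alpha\,D_s\beta\Bigr]\phi\,ds ,
\end{equation*}
which is not of the form $(\text{function})\cdot\alpha\beta$ and does not cancel --- consistently with the fact that the Michor--Mumford geodesic equation for these almost-local metrics is not a pointwise ODE. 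Worse, even the purely local part of the Koszul sum comes out as $\frac{3AK_c^3+K_c}{2(1+AK_c^2)}\alpha\beta$, \emph{half} the stated coefficient, because Koszul carries the weight $\tfrac12$ on the derivative of the metric density; your own consistency check at $A=0$ would detect this, since the Levi--Civita Christoffel term of the bare $L^2$ metric $G^0$ is $\tfrac12 K_c\alpha\beta$, whereas the stated formula at $A=0$ gives $K_c\alpha\beta$. The coefficient $\frac{3AK_c^3+K_c}{1+AK_c^2}$ in the theorem is exactly $\frac{\partial}{\partial\nu}\log\bigl[(1+AK_c^2)\,ds\bigr]$ computed along normal offsets, using the pointwise identities $\partial_\nu K_c=K_c^2$ and $\partial_\nu(ds)=K_c\,ds$ that the paper employs in its Hessian computation; i.e.\ it arises from the ansatz $\nabla_hm=(\Phi\,ds)^{-1}D_h(m\,\Phi\,ds)$ restricted to normal data, not from the Koszul formula. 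So before any bookkeeping you must resolve a structural mismatch: either exhibit explicitly the cancellation and the missing factor of two (which I do not believe is possible with the stated first variations), or identify and work with the precise --- non-Koszul --- definition of the covariant derivative used in \cite{Schu} and \cite{DS1}. As written, your plan, carried out faithfully, terminates at a different formula than the one to be proved.
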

See\cite{DS1} for the details of the proof.
\begin{remark}
 Let us now  calculate the torsion of the connection $\nabla.$
Indeed, one is wondering if   the connection $\nabla$  coincides with the Levi-Civita connection.\\
 We have 
\begin{eqnarray}
T(V,W)&=&\nabla_{V}W-\nabla_{W}V-[V,W]\nonumber\\
T(V,W)&=&\big<D_{V}W,\nu\big>+\left(\frac{3AK^{3}_{c}+K_{c}}{1+AK_{c}^{2}}\right)\big<V, \nu \big>\big<W, \nu\big>\nonumber\\
          &-&\big<D_{W}V,\nu\big>-\left(\frac{3AK^{3}_{c}+K_{c}}{1+AK_{c}^{2}}\right)\big<V, \nu \big>\big<W, \nu \big>-[V,W]\nonumber\\
T(V,W)&=&\frac{\partial\beta}{\partial \nu }\alpha-\frac{\partial\alpha}{\partial \nu }\beta-[h,m].\nonumber
\end{eqnarray}
But $$\frac{\partial\beta}{\partial \nu }\alpha-\frac{\partial\alpha}{\partial \nu}\beta=[h,m].$$
Then we have:
\begin{eqnarray}
T(V,W)&=&[h,m]-[h,m]\nonumber\\
T(V,W)&=&0.\nonumber
\end{eqnarray}
As a conslusion, we claim that
 $\nabla$ is compatible with the metric $G^{A}$ and its torsion is zero, so it coincides with the Levi-Civita connection. 
\end{remark}

\subsection{ Sufficient condition for the minimality of a  shape functional}

 In this section,  assuming  at first that there is at least one critical point, we shall  first present the sufficient condition on the existence of a local minimum  for a functional \ $J(\Omega)$  given as follows:

 \begin{equation}\label{l}
    J(\Omega) = \displaystyle\int_\Omega \ f_0(u_\Omega, \nabla u_\Omega)
 \end{equation}
 where $f_0$ is a function of $\mathbb{R} \times \mathbb{R}^n$ that we suppose to be smooth and $u_\Omega$ denotes a smooth solution of a boundary value problem.\\
 And in the second part,  in the case of   $J_2(\Omega), $ we compute the second shape derivative.
 
 The fundamental question is then to study  the existence of  the local strict  minima of this functional under possible constraints that $\Omega$ is a critical point. That means that the first order derivative with respect to the domain is equal to zero at the domain $\Omega.$ We shall examine, for that, how  this solution $u_{\Omega}$ varies when its domain of definition $\Omega$ moves.\\
 
 Let us recall the classic method of studying a critical point. Let $(B, \ \| \ . \ \|_1)$ be a Banach space and let $E : (B, \| \ . \ \|_1) \longrightarrow \mathbb{R}$ be a function of class $\mathcal C^2$ whose differential $Df$ vanishes at $0$. The Taylor-Young formula is then written as 
 \begin{equation}\label{TY}
    E(u) = E(0) + D^2 \ E(0) \ . \ (u , u) + o(|| u
    ||_1^2).
 \end{equation}
 In particular, if the Hessian form $D^2 E(0)$ is coercive in the norm $\| \ . \ \|_1$, then the critical point  $0$ is a strict local  minimum of $E$. The fundamental difficulty in the study of critical forms is caused by  the appearance of a second norm $\| \ . \ \|_2$ finer than $\| \ . \ \|_1$ \ \ $(i.e \  \ \| \ . \ \|_2 \leq C \| \ . \ \|_1)$. The Hessian form, is not in general, coercive for the norm $\| \ . \ \|_1$ but it is for the standard norm $\| \ . \ \|_2$. If these norms are not equivalent, which is generally the case, concluding that the minimum is strict is impossible, even locally for the strong norm.  For  an illustartation, cf \cite{DS1}.
 
In the case where $\O_0$  is  a critical point  for  the functional $J,$ to show that  it is a strict local minimum,
 we have  
to   study  the positiveness of a quadratic form  which  is obtained by  computing  the second derivative of
$J$ with respect to the domain. So before proceeding further, we need some hypotheses ;\\
 let us suppose that:
 \begin{description}
 \item{(i)\,\,-} $\O$ is a ${\mathcal C}^{2}-$ regular \,\, open domain.
 \item{(ii)\,\,-} $V(t, x) = \alpha(x) \nu(x),\,\,\,\alpha\,\,\in\,\,H^{\frac{1}{2}}(\dpa \O),\,\,\forall\,\,t\,\,\,\in\,\,[0,\epsilon[$.
 \end{description}
 In \cite{DaPi}, (see also \cite{Da1}, \cite{Da2}), the authors showed that it is not sufficient
 to prove that the quadratic form is positive to claim that: a critical
 shape is a minimum.
 In fact most of the time people use the Taylor Young formula to study
 the positiveness of the quadratic form.\\
 For $t \in [0, \epsilon[$, $j(t):= J(\O_t)=J(\O)+ t dJ(\O, V)+ \frac{1}{2}t^2
 d^2J(\O,V,V)+o(t^2), V= V(0, x)= V(0).$\\
 The quantity $o(t^2)$ is expressed with the norm of $\mathcal
 C^2$. The $H^{\frac{1}{2}}(\dpa \O)$ norm appears in the expression of $d^2J(\O,V,V)$. And these two norms are not equivalent.
 The quantity $o(t^2)$ is not smaller than $||V||_{H^{\frac{1}{2}}(\dpa
 \O)}$, see the example in \cite{DaPi}.
 Then such an argument does not insure that the critical point is
 a local strict minimum.\\
 
 In our study, we shall use  the hessian obtained via the Sobolev metric $G^A.$ 
\subsection{Positiveness of the quadratic form in the infinite Riemannian point of view}
\begin{definition}
Let $J: \Omega\to\mathbb{R}$ be an functional. One defines the hessian Riemannian shape as follows:
$$
Hess J(\Omega)[V]:=\nabla_{V}grad J 
$$
where $\nabla_{V}$ denotes the derivative following the vector field $V$.
\end{definition}
\begin{theorem}\label{punk0}
The hessian Riemannian shape defined by the Riemannian metric $G^{A}$ verifies the following condition:
$$
G^{A}(Hess J(\Omega)[V],W)=d(dJ(\Omega)[W])[V]-dJ(\Omega)[\nabla_{V}W].
$$
\end{theorem}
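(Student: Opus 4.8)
The plan is to obtain the identity directly from the defining property of the Riemannian gradient together with the metric compatibility of the connection $\nabla$, which was established in the remark following the covariant-derivative theorem to coincide with the Levi-Civita connection of $G^{A}$. The starting point is the relation characterizing $grad J(\Omega)$: for every tangent field $W$ one has
$$
dJ(\Omega)[W]=G^{A}\bigl(grad J(\Omega),W\bigr),
$$
which is exactly the generalization of $(\ref{GRD2})$ to an arbitrary direction. The whole argument then consists in differentiating this scalar relation along a second field $V$ and reading off the three resulting terms.

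First I would interpret the left-hand side $d(dJ(\Omega)[W])[V]$ as the directional derivative, in the direction $V$, of the real-valued map $\Omega\mapsto dJ(\Omega)[W]$, i.e. the iterated shape derivative $V\cdot(W\cdot J)$ on the manifold $B_{e}$. Applying this directional derivative to both sides of the gradient relation gives $d(dJ(\Omega)[W])[V]=V\cdot G^{A}\bigl(grad J(\Omega),W\bigr)$. Since $\nabla$ is compatible with $G^{A}$, the Leibniz rule for the metric yields
$$
V\cdot G^{A}\bigl(grad J(\Omega),W\bigr)=G^{A}\bigl(\nabla_{V}grad J(\Omega),W\bigr)+G^{A}\bigl(grad J(\Omega),\nabla_{V}W\bigr).
$$
In the first term I recognize the definition $Hess J(\Omega)[V]=\nabla_{V}grad J$, so that $G^{A}(\nabla_{V}grad J,W)=G^{A}(Hess J(\Omega)[V],W)$; in the second term I reapply the gradient relation, now to the field $\nabla_{V}W\in T_{c}B_{e}$, obtaining $G^{A}(grad J(\Omega),\nabla_{V}W)=dJ(\Omega)[\nabla_{V}W]$. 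Substituting and solving for the Hessian term gives
$$
G^{A}(Hess J(\Omega)[V],W)=d(dJ(\Omega)[W])[V]-dJ(\Omega)[\nabla_{V}W],
$$
which is the asserted formula.

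The main obstacle is not the algebra but the justification of the Leibniz rule $V\cdot G^{A}(X,Y)=G^{A}(\nabla_{V}X,Y)+G^{A}(X,\nabla_{V}Y)$ in this infinite-dimensional setting: this is precisely the metric compatibility of $\nabla$, and I would lean entirely on the earlier verification that the connection associated with $G^{A}$ is torsion-free and metric, hence equal to the Levi-Civita connection, so that the product rule holds by construction. A secondary point worth checking is that $\nabla_{V}W$ again lies in $T_{c}B_{e}$, so that the gradient relation may legitimately be reapplied to it; this follows from the explicit form of $\nabla_{V}W$ supplied by the covariant-derivative theorem, whose output is a scalar multiple of $\nu$ and is therefore tangent to $B_{e}$ by Proposition \ref{proposition-4}. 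Once these two facts are granted, the identity is forced and holds independently of the particular functional $J$, which is why it applies in particular to the shape functional $J_{2}$ of interest.
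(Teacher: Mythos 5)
Your proposal is correct and follows essentially the same route as the paper: both arguments apply the metric compatibility (Leibniz rule) of the Levi-Civita connection to $V\cdot G^{A}(grad\, J, W)$, identify $\nabla_{V}grad\, J$ with $Hess\, J(\Omega)[V]$, and reinterpret the remaining terms via the defining relation $dJ(\Omega)[W]=G^{A}(grad\, J(\Omega),W)$. Your additional remarks on why $\nabla_{V}W$ again lies in $T_{c}B_{e}$ and on the source of the Leibniz rule are sensible refinements of the same argument rather than a different proof.
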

\begin{proof}
Our purpose is to show that 
$$
G^{A}(Hess J(\Omega)[V],W)=d(dJ(\Omega)[W])[V]-dJ(\Omega)[\nabla_{V}W].
$$
So  let us  use the compatibility of the metric $G^{A}$ with the Levi-Civita  connection. We have
\begin{eqnarray}
V.G^{A}(grad J,W)&=&G^{A}(grad J,\nabla_{V}{W})+G^{A}(\nabla_{V}{grad J},W),\nonumber\\
G^{A}(\nabla_{V}{grad J},W)&=&V.G^{A}(grad J, W)-G^{A}(grad J,\nabla_{V}W)\nonumber.
\end{eqnarray}
Since $G^{A}(Hess J(\Omega)[V], W)=G^{A}(\nabla_{V}grad J , W),$ we have
\begin{eqnarray}
G^{A}(Hess J(\Omega)[V],W)&=&V.G^{A}(grad J,W)-G^{A}(grad J,\nabla_{V}W,\nonumber\\
G^{A}(Hess J(\Omega)[V],W) &=&V.(WJ)-(\nabla_{V}W).J,\nonumber\\
 G^{A}(Hess J(\Omega)[V],W) &=&d(dJ(\Omega)[W])[V]-dJ(\Omega)[\nabla_{V}W]\nonumber
\end{eqnarray}
where  $V,W\in\mathcal{C}^{\infty}(\mathbb{R}^{2},\mathbb{R}^{2})$ are vector fields normal to the boundary $\partial\Omega$ and $d(dJ(\Omega)[W])[V]$ define\textcolor{red}{s} the standard Hessian shape.
\end{proof}

 Let us  compute $G^{A}(Hess J(\Omega)[V],W)$ by using directly the Sobolev-type metric $G^A$.  Then we have the following proposition.
 \begin{proposition}
 \begin{eqnarray}\label{punk2}
G^{A}(Hess J_2(\Omega)[V],W)=
\int_{\partial\Omega}\left[\frac{\partial}{\partial \nu }\left (H u^2 + \gamma \right)+ K_{c}\left(H u^2 + \gamma \right)\right]\big<V, \nu\big>\big<W, \nu \big> d\sigma.
\end{eqnarray}
 \end{proposition}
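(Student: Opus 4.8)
The plan is to compute $G^{A}(\mathrm{Hess}\,J_2(\Omega)[V],W)$ directly from the definition $\mathrm{Hess}\,J_2(\Omega)[V]=\nabla_{V}\,\mathrm{grad}\,J_2$, feeding in the two ingredients already at our disposal: the explicit gradient
\[
\mathrm{grad}\,J_2=\frac{1}{1+AK_c^{2}}\left(Hu^{2}+\gamma\right),
\]
computed above, and the closed form of the covariant derivative $\nabla_{V}W$ associated with the metric $G^{A}$ established above. First I would write $\mathrm{grad}\,J_2=g\,\nu$ with the scalar $g:=\frac{1}{1+AK_c^{2}}(Hu^{2}+\gamma)$, and set $V_{|\partial\Omega}=\langle V,\nu\rangle\,\nu$ and $W_{|\partial\Omega}=\langle W,\nu\rangle\,\nu$, all tangent vectors being normal to $\partial\Omega$ as in the hypotheses. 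Applying the covariant-derivative formula with $m:=g\,\nu$ (so that its normal coefficient is $g$) and $h:=V_{|\partial\Omega}$ yields the normal coefficient of $\nabla_{V}\,\mathrm{grad}\,J_2$, namely $\frac{\partial g}{\partial\nu}\langle V,\nu\rangle+\frac{3AK_c^{3}+K_c}{1+AK_c^{2}}\,g\,\langle V,\nu\rangle$.

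Next I would pair this with $W$ using the very definition of the metric, $G^{A}(\varphi\nu,\psi\nu)=\int_{\partial\Omega}(1+AK_c^{2})\varphi\psi\,ds$. The factor $1+AK_c^{2}$ then multiplies the bracket above, and the computation collapses to
\[
G^{A}(\mathrm{Hess}\,J_2(\Omega)[V],W)=\int_{\partial\Omega}\left[(1+AK_c^{2})\frac{\partial g}{\partial\nu}+(3AK_c^{3}+K_c)\,g\right]\langle V,\nu\rangle\,\langle W,\nu\rangle\,d\sigma .
\]
It then remains to show that the bracket equals $\frac{\partial}{\partial\nu}(Hu^{2}+\gamma)+K_c(Hu^{2}+\gamma)$. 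Writing $\Phi:=Hu^{2}+\gamma$ and $D:=1+AK_c^{2}$, so $g=\Phi/D$, the quotient rule gives $D\frac{\partial g}{\partial\nu}=\frac{\partial\Phi}{\partial\nu}-\frac{2AK_c\,\Phi}{D}\frac{\partial K_c}{\partial\nu}$, whence the bracket becomes $\frac{\partial\Phi}{\partial\nu}+\frac{\Phi}{D}\left(3AK_c^{3}+K_c-2AK_c\frac{\partial K_c}{\partial\nu}\right)$.

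The crux — and the step I expect to be the main obstacle — is the geometric identity that forces the residual curvature term to simplify. One checks that the target $\frac{\partial\Phi}{\partial\nu}+K_c\Phi$ is reached precisely when $3AK_c^{3}+K_c-2AK_c\frac{\partial K_c}{\partial\nu}=K_c+AK_c^{3}$, i.e. when the normal derivative of the curvature obeys the Riccati-type relation $\frac{\partial K_c}{\partial\nu}=K_c^{2}$ governing the evolution of the curvature of the parallel curves (level sets of the signed distance to $\partial\Omega$) along the normal direction. Substituting this identity makes the $A$-dependent terms cancel and delivers the announced formula; here one also uses $H=K_c$ in the plane. The delicate points are fixing the sign and orientation conventions for $\nu$, $K_c$ and the arc-length derivative $D_s$ so that the covariant-derivative formula and the curvature-evolution identity are mutually compatible, and justifying that $\frac{\partial}{\partial\nu}$ acts on the field $Hu^{2}+\gamma$ as a genuine normal derivative (note that the Neumann condition $\frac{\partial u}{\partial\nu}=0$ on $\partial\Omega$ reduces the $u$-part of this derivative to $\frac{\partial H}{\partial\nu}u^{2}$). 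As an independent check one may instead start from Theorem \ref{punk0}, compute the ordinary shape Hessian $d(dJ_2(\Omega)[W])[V]$ from \eqref{GRD1} and subtract $dJ_2(\Omega)[\nabla_{V}W]$ using the same covariant-derivative formula; both routes must yield the same expression.
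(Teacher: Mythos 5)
Your computation is correct and follows essentially the same route as the paper: expand $G^{A}$ against the normal component of $\nabla_{V}\,\mathrm{grad}\,J_2$, insert $\mathrm{grad}\,J_2=\frac{1}{1+AK_c^{2}}(Hu^{2}+\gamma)$ into the covariant-derivative formula, apply the quotient rule, and invoke the identity $\frac{\partial K_c}{\partial\nu}=K_c^{2}$ (which the paper also uses, citing \cite{DS1}) so that the $A$-dependent curvature terms collapse to $K_c(Hu^{2}+\gamma)$. The only difference is notational ($g,\Phi,D$ versus the paper's $\psi$), plus your optional cross-check via the identity $G^{A}(\mathrm{Hess}\,J(\Omega)[V],W)=d(dJ(\Omega)[W])[V]-dJ(\Omega)[\nabla_{V}W]$, which the paper does not carry out here.
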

 \begin{proof}
\begin{eqnarray}
G^{A}(Hess J_2(\Omega)[V],W)&=&\int_{\partial\Omega}\left(1+AK^{2}_{c}\right)Hess J_2(\Omega)[V]W,\nonumber\\
&=&\int_{\partial\Omega}\left(1+AK^{2}_{c}\right)\nabla_{V}gradJ_2(\Omega)W,\nonumber\\
 &=&\int_{\partial\Omega}\left(1+AK^{2}_{c}\right)\nabla_{h}gradJ_2(\Omega)m.\nonumber
\end{eqnarray}
 Since  $gradJ_2(\Omega)=\frac{1}{1+AK^{2}_{c}}\psi, \psi:= Hu^2 + \gamma$, we have
\begin{eqnarray}
\nabla_{h}gradJ_2(\Omega)&=&\frac{\partial}{\partial \nu}\left(gradJ_2(\Omega)\right)\alpha+\left(\frac{3AK^{3}_{c}+K_{c}}{1+AK_{c}^{2}}\right)gradJ_2(\Omega)\alpha,\nonumber\\
&=&\frac{\partial}{\partial \nu}\left(\frac{1}{1+AK^{2}_{c}}\psi\right)\alpha+\frac{1}{1+AK^{2}_{c}}\psi\left(\frac{3AK^{3}_{c}+K_{c}}{1+AK_{c}^{2}}\right)\alpha,\nonumber\\
&=&\frac{\partial}{\partial \nu}\left[(1+AK^{2}_{c})^{-1}\right]\psi\alpha+\frac{\partial\psi}{\partial \nu}\left(\frac{1}{1+AK^{2}_{c}}\right)\alpha+\frac{1}{1+AK^{2}_{c}}\psi\left(\frac{3AK^{3}_{c}+K_{c}}{1+AK_{c}^{2}}\right)\alpha,\nonumber\\
&=&-2AK_{c}\frac{\partial K_{c}}{\partial \nu}\left(1+AK^{2}_{c}\right)^{-2}\psi\alpha+\frac{\partial\psi}{\partial \nu}\left(\frac{1}{1+AK^{2}_{c}}\right)\alpha\nonumber\\&+&\frac{1}{1+AK^{2}_{c}}\psi\left(\frac{3AK^{3}_{c}+K_{c}}{1+AK_{c}^{2}}\right)\alpha\nonumber.
\end{eqnarray}
Note that $\frac{\partial K_{c}}{\partial \nu}= K_c^2,$ (cf \cite{DS1}) which implies that:
\begin{eqnarray}
\nabla_{h}gradJ_2(\Omega)
&=&\frac{-2AK^{3}_{c}}{\left(1+AK^{2}_{c}\right)^{2}}\psi\alpha+\frac{\partial\psi}{\partial \nu}\left(\frac{1}{1+AK^{2}_{c}}\right)\alpha+\frac{1}{1+AK^{2}_{c}}\psi\left(\frac{3AK^{3}_{c}+K_{c}}{1+AK_{c}^{2}}\right)\alpha \nonumber.
\end{eqnarray}
 Then, coming back to our hessian computation, we have:
\begin{eqnarray}
 G^{A}(Hess J_2(\Omega)[V],W)&=&\int_{\partial\Omega}\left(1+AK^{2}_{c}\right)\left[\frac{-2AK^{3}_{c}}{\left(1+AK^{2}_{c}\right)^{2}}\psi\alpha+\frac{\partial\psi}{\partial \nu}\left(\frac{1}{1+AK^{2}_{c}}\right)\alpha\right.\nonumber\\&+&\left.\frac{1}{1+AK^{2}_{c}}\psi\left(\frac{3AK^{3}_{c}+K_{c}}{1+AK_{c}^{2}}\right)\alpha\right]\beta d\sigma,\nonumber\\
 &=&\int_{\partial\Omega}\left[\frac{-2AK^{3}_{c}}{1+AK^{2}_{c}}\psi\alpha+\frac{\partial\psi}{\partial \nu}\alpha+\psi\left(\frac{3AK^{3}_{c}+K_{c}}{1+AK_{c}^{2}}\right)\alpha\right]\beta d\sigma,\nonumber\\
 &=&\int_{\partial\Omega}\left[\frac{\partial\psi}{\partial \nu}+\psi\left(\frac{AK^{3}_{c}+K_{c}}{1+AK_{c}^{2}}\right)\right]\alpha\beta d\sigma,\nonumber\\
&=&\int_{\partial\Omega}\left[\frac{\partial\psi}{\partial \nu}+\psi K_{c}\left(\frac{1+AK^{2}_{c}}{1+AK_{c}^{2}}\right)\right]\alpha\beta d\sigma.\nonumber
\end{eqnarray}
Replacing $\psi$ by its expression, we have:
\begin{eqnarray}\label{punk2}
G^{A}(Hess J_2(\Omega)[V],W)=
\int_{\partial\Omega}\left[\frac{\partial}{\partial \nu}\left( Hu^2 + \gamma \right)+ K_{c}\left(Hu^2 + \gamma \right)\right]\big<V,\nu\big>\big<W,\nu \big> d\sigma.
\end{eqnarray}
\end{proof}
\begin{remark}
  Let us note first  that there is a symmetry relation with respect to the hessian which is in the case of our considered Riemannian structure a self adjoint operator with respect to the metric $G^A.$
\end{remark}
Let us have a look at the two formulas of the second derivation when $V= W= \alpha \nu.$\\
On the other hand by Theorem $\ref{punk0},$ we have:
\begin{eqnarray*}
G^{A}\left(Hess J(\Omega)[V], W \right)&=&d\left(dJ(\Omega)[W]\right)[V]-dJ(\Omega)[\nabla_{V}W]\nonumber.
\end{eqnarray*}
Then for $V=W$ we derive:
\begin{eqnarray*}
d\left(dJ(\Omega)[V]\right)[V]= d^{2}J (\O;V;V)=  G^{A}\left(Hess J(\Omega)[V], V \right)+dJ(\Omega)[\nabla_{V}V]\nonumber.
\end{eqnarray*}
From these information we can deduce the following conclusions as a corollary.
\begin{corollary}
\begin{itemize}
\item What is obtained with the Riemannian hessian formula is easier to derive simple control for the characterization of the optimal shape in a number of ways.
\item If the shape optimization problem  introduced in the subsection 6.3  has a minimum constrained with the eigenvalue  Laplacian-Neumann problem, then  $G^{A}\left(Hess J_2(\Omega)[V], V \right)\geq 0.$ 
The optimality condition is given by
\begin{eqnarray*}
		dJ_2 (\Omega)[V]:= dJ_2 (\Omega, V)= \int_{\partial \Omega} 2 u u' + (V. \nu) [\frac{\partial u^2}{\partial \nu} + H u^2]+ \gamma  V. \nu= 0
		\end{eqnarray*}
		And one interesting way to have $\Omega$ equal to disc is to look  for it, with $u= c_1$ on $\partial \Omega, c_1\in  \mathbb{R}^*.$ And if  the answer is positive then
		$$H u^2 + \gamma = 0 \, \, \mbox{on}\, \, \partial \Omega.$$
		
		And the inequality $G^{A}\left(Hess J_2(\Omega)[V], V \right)\geq 0$  is equivalent to $$\displaystyle \int_{\partial\Omega}\left[\frac{\partial}{\partial \nu}\left( Hu^2 + \gamma \right)\right]\alpha^2 d\sigma \geq 0, \, \, \forall \alpha \in \mathcal C^{\infty} (\mathbb{R}^2,  \mathbb{R}) .$$
		This  above integral is never but the following non negative quantity  $$c_1^2 \int_{\partial \Omega} H^2 \alpha^2 d\sigma.$$
\item Now, if $\Omega$ is only a critical point of the functional $J_2$, satisfying the   following symmetry problem
\begin{eqnarray*}
\begin{cases}\Delta u + k^2 u= 0\; \;  in \; \;  \Omega\\
\frac{\partial u}{\partial \nu}_{\big\arrowvert_{\partial \Omega}}= 0\\
 u_{\big \arrowvert_{\partial \Omega}}= const \neq 0\\
 k^2  > 0 , const \in \mathbb{R}
\end{cases}
\end{eqnarray*}
then we have
$$H (const)^2 + \gamma = 0 \, \, \mbox{on}\, \, \partial \Omega.$$

 And in addition $\Omega$ could be  a  good candidate of  strict local  minimum for $J_2$. In fact:
 \begin{eqnarray*}
\displaystyle \int_{\partial\Omega}\left[\frac{\partial}{\partial \nu}\left(Hu^2 + \gamma \right)\right]\alpha^2 d\sigma=  (const)^2\int_{\partial \Omega} H^2 \alpha^2 d\sigma \geq C_0 \|\alpha\|_{L^2 (\partial \Omega)}^2, C_0= (const)^2 H^2(\sigma_0) >0, \sigma_0\in  \partial \Omega.
\end{eqnarray*}
\end{itemize}
We think that the numerical part  of these  orientations are  interesting to be addressed. And we would like to invite  the reader  to see the paper \cite{Schu} (Theorem $2.4$ and section 3).
\end{corollary}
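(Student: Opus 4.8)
The plan is to read the corollary as three linked consequences of the first- and second-order optimality conditions, combined with the explicit Riemannian Hessian (\ref{punk2}) and the abstract identity of Theorem \ref{punk0}. I would treat $\gamma$ as the Lagrange multiplier attached to the volume constraint $|\Omega|=V_0$, so that at a critical shape the reduced first derivative $dJ_2(\Omega,V)=\int_{\partial\Omega}(V\cdot\nu)\,(Hu^2+\gamma)\,d\sigma$ (obtained earlier using the material-derivative condition $\dot u=0$ and the eigenvalue normalization $\int_\Omega u^2=1$) vanishes for every admissible normal field $V\cdot\nu=\alpha$. Under the ansatz we are chasing, namely that the minimizing boundary value is a nonzero constant $u=c_1$ on $\partial\Omega$, this forces the pointwise relation $Hu^2+\gamma=0$ on $\partial\Omega$, which is the first displayed conclusion of the second bullet. (This is also the step that, via $Hc_1^2+\gamma=0$ and Alexandrov's theorem on constant-mean-curvature boundaries, identifies $\Omega$ with a disc.)

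Next I would establish the reduction of the Hessian. Substituting $Hu^2+\gamma=0$ into (\ref{punk2}) annihilates the term $K_c(Hu^2+\gamma)$, leaving $G^{A}(\mathrm{Hess}\,J_2(\Omega)[V],V)=\int_{\partial\Omega}\partial_\nu(Hu^2+\gamma)\,\alpha^2\,d\sigma$, so the Hessian inequality is exactly equivalent to the non-negativity of this boundary integral, as asserted. To evaluate the normal derivative I would use the Neumann condition $\partial_\nu u=0$, which gives $\partial_\nu(u^2)=2u\,\partial_\nu u=0$ on $\partial\Omega$, together with the two-dimensional facts $H=K_c$ and $\partial_\nu K_c=K_c^2$ recalled from \cite{DS1}. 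These combine to give $\partial_\nu(Hu^2+\gamma)=(\partial_\nu H)\,u^2=H^2c_1^2$ on $\partial\Omega$, whence $G^{A}(\mathrm{Hess}\,J_2[V],V)=c_1^2\int_{\partial\Omega}H^2\alpha^2\,d\sigma\ge 0$, which is manifestly non-negative and matches the claimed expression. As a cross-check, Theorem \ref{punk0} gives $d^2J_2(\Omega;V,V)=G^{A}(\mathrm{Hess}\,J_2(\Omega)[V],V)+dJ_2(\Omega)[\nabla_V V]$, and since $\Omega$ is critical the last term drops, so the second-order necessary condition at a minimizer indeed transfers directly to $G^{A}(\mathrm{Hess}\,J_2[V],V)\ge 0$.

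For the third bullet I would drop the minimality hypothesis and assume only that $\Omega$ is a critical point solving the Schiffer-type overdetermined system $\Delta u+k^2u=0$, $\partial_\nu u=0$, $u=\mathrm{const}\neq 0$ on $\partial\Omega$, $k^2>0$. Vanishing of $dJ_2$ again forces $H(\mathrm{const})^2+\gamma=0$ on $\partial\Omega$; since $\gamma<0$ and $\mathrm{const}\neq 0$, this says $H\equiv -\gamma/(\mathrm{const})^2$ is a strictly positive constant along the entire boundary. The same normal-derivative computation then yields $G^{A}(\mathrm{Hess}\,J_2[V],V)=(\mathrm{const})^2\int_{\partial\Omega}H^2\alpha^2\,d\sigma$; because $H$ is a nonzero constant, $H^2$ factors out and the quadratic form is bounded below by $C_0\|\alpha\|_{L^2(\partial\Omega)}^2$ with $C_0=(\mathrm{const})^2H^2(\sigma_0)>0$, which is precisely the coercivity estimate displayed and marks $\Omega$ as a candidate strict local minimum.

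The genuine obstacle is not any of the above identities but the passage from coercivity to strict local minimality. As discussed after (\ref{TY}) following \cite{DaPi} (see also \cite{Da1}, \cite{Da2}), the lower bound is measured in the weak $L^2(\partial\Omega)$ norm while the Taylor remainder $o(t^2)$ is controlled only in the finer $H^{\frac{1}{2}}(\partial\Omega)$ (or $\mathcal C^2$) norm, and these are not equivalent. This two-norm discrepancy is exactly why one cannot upgrade the positive quadratic form to a rigorous strict-minimum statement, and it is the reason the corollary phrases the conclusion as $\Omega$ being only a \emph{candidate}. I would therefore present the coercivity as a necessary structural ingredient and an affirmative indication, flagging explicitly that resolving the norm gap is the delicate point and that testing it numerically in the Riemannian framework of \cite{Schu} is the natural continuation.
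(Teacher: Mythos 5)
Your proposal is correct and follows essentially the same route as the paper: vanishing of the reduced first derivative $\int_{\partial\Omega}(V\cdot\nu)(Hu^2+\gamma)\,d\sigma$ under the ansatz $u=c_1$ gives $Hu^2+\gamma=0$, which kills the $K_c(Hu^2+\gamma)$ term in the Hessian formula (\ref{punk2}), and then the Neumann condition together with $\partial_\nu K_c=K_c^2$ yields $\partial_\nu(Hu^2+\gamma)=H^2c_1^2$, the non-negative (resp.\ coercive) quadratic form, with Theorem \ref{punk0} transferring the classical second-order condition to the Riemannian Hessian at a critical point exactly as in the paper's preceding display. Your explicit flagging of the two-norm gap (following \cite{DaPi}) as the reason $\Omega$ remains only a \emph{candidate} strict local minimum, and the Alexandrov-type remark identifying constant-curvature boundaries with circles, merely make explicit what the paper leaves implicit.
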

\begin{remark}
Let us  introduce the following shape functional
$$J_3 (\Omega):= \int_{\Omega} |\nabla u|^2 dx + \gamma vol (\Omega), \gamma \in \mathbb{R_+^*}$$
 with $\Omega \in \mathcal O$ and the following eigenvalue problem
 \begin{eqnarray*}
\begin{cases} 
\Delta u + k^2 u= 0 \; \; in\; \;   \Omega\\
u\big \arrowvert_{\partial \Omega}= 0 \\
\displaystyle \int_{\Omega} u^2 dx = 1

\end{cases}
\end{eqnarray*}
By the same techniques as previously, we have :
 \begin{eqnarray*}
 grad J_3= \frac{1}{1+AH^2} (-\frac{1}{2}(\frac{\partial u}{\partial \nu})^2 + \gamma).\\
G^{A}(Hess J_3(\Omega)[V],W)=
\int_{\partial\Omega}\left[\frac{\partial}{\partial \nu}\left (-\frac{1}{2}(\frac{\partial u}{\partial \nu})^2 + \gamma)\right)+ K_{c}\left((-\frac{1}{2}(\frac{\partial u}{\partial \nu})^2 + \gamma) \right)\right]\big<V,\nu \big>\big<W, \nu\big> d\sigma.
\end{eqnarray*}
Thanks to the above information, a same analysis can be tried  on the Schiffer's problem  related to conjecture 5. But theoretically, there is not  a qualitative information claiming straightly that $\Omega$ is a disc. A numerical study could give additional information on the shape of the domaine $\Omega$ solution to the overdetermined problem
\begin{eqnarray*}
\begin{cases}\Delta u + k^2 u= 0\; \;  in \; \;  \Omega\\
\frac{\partial u}{\partial \nu}_{\big\arrowvert_{\partial \Omega}}= c \neq 0\\
 u_{\big \arrowvert_{\partial \Omega}}= 0\\
 k^2 =  > 0 , c \in \mathbb{R}
\end{cases}
\end{eqnarray*}
We  think also  that following  the Theorem $2.4$ and the section 3 in \cite{Schu}, numerical tests could be realized.
\end{remark}
\subsection{Necessary  condition of minimality for  the two models}
The  above  shape optimization problem $ J_3 (\Omega)$ with the eigenvalue Laplacian-Dirichlet problem is well understood, see for instance  \cite{BucBut}, \cite{HP} and \cite{Hen} even for additional details (with more general class of admissible  domains).\\
For the one with a Neumann condition is more delicate  and largely open. We restrict ourselves to situations for which we are confident of the existence of an extension operator.
Let us consider $B$ a ball of $\mathbb{R}^N,$ $\Omega \subset B$ and the following class $\mathcal S_k,  k \in (0, \infty)$ of open sets defined by:
\begin{itemize}
\item for all $\Omega \in \mathcal S_k,$ there exists a linear continuous extension operator $P_{\Omega}$ of $H^1 (\Omega)$ into $H^1 (B)$ with 
\item $\|P_{\Omega}\|\leq k.$
\end{itemize}
\begin{remark}
\begin{itemize}
\item Let $\Omega$ be a bounded Lipschitz domain. The injection $H^1 (\Omega) \hookrightarrow L^2 (\Omega)$ is then compact, and the spectrum of the Neumann- Laplacian consists only on eigenvalues:
\begin{eqnarray*}
0= \lambda_1 \leq \lambda_2 \leq \lambda_3\leq \cdots \leq \lambda_k\leq \cdots \rightarrow \infty.
\end{eqnarray*}
\item It  is well known that  if $\Omega_n \in \mathcal O, n\in \mathbb{N},$ then  the sequence of eigenvalues  $\lambda_{\Omega_n}$ converges to $\lambda_{\Omega}$ (cf \cite{BucBut}, Corollary 7.4.2; \cite{Hen}, Theorem 2.3.25. )
\item We have also   the well known results for $\Omega, \Omega_n \in \mathcal O$, (see for instance  \cite{HP}, Theorem 2.4.10, pp 59) on existence of subsequence $\Omega_{n_k}$ that converges to $\Omega$ in the sense
of Hausdorff, in the sense of characteristic functions and in the sense of compacts.
Moreover, $\overline{\Omega}_{n_k}$ and $\partial \overline{\Omega}_{n_k} $  converge in the sense of Hausdorff respectively to $ \overline{\Omega}$  and  $\partial \overline{\Omega} .$
\item And finally, we have the shape minimization problems with respectivement $J_2 (\Omega),$ on the admissible domains space $\mathcal O$ constrained to eigenvalue Laplace Neumann problem gets a solution.
\end{itemize}
\end{remark}

\end{document}